\def\carlo #1{{\color{blue}#1}}
\def\carlo#1{#1}
\theoremstyle{plain}
\newtheorem{thm}{Theorem}[section]
\newtheorem{cor}[thm]{Corollary}
\newtheorem{lem}[thm]{Lemma}
\newtheorem{prop}[thm]{Proposition}
\newtheorem{defin}[thm]{Definition}
\theoremstyle{definition}
\newtheorem{rmk}[thm]{Remark}
\def\En{\mathbb{N}}
\def\Ar{\mathbb{R}}
\def\Pi{\mathbb{P}}
\def\f{\mathcal{F}}
\def\Z{\mathcal{Z}}
\def\exval{\mathbb{E}}
\def\cL{\mathscr{L}}
\def\eps{\varepsilon}
\def\beq{\begin{equation}}
\def\eeq{\end{equation}}
\def\rarr{\rightarrow}
\def\rarrw{\rightharpoonup}
\def\weakstar{\stackrel{*}{\rightharpoonup}}
\def\embed{\hookrightarrow}
\def\cembed{\stackrel{c}{\hookrightarrow}}
\def\l|{\left\|}
\def\r|{\right\|}
\def\V{\mathcal{V}}
\def\H{\mathcal{H}}
\def\A{\mathcal{A}}
\def\C{\mathcal{C}}
\def\B{\mathcal{B}}
\def\U{\mathcal{U}}
\def\W{\mathcal{W}}
\def\E{\mathbb{E}}
\def\P{\mathcal{P}}
\begin{document}
\begin{center}
{\huge\rm Singular Stochastic Allen-Cahn equations\\
with dynamic boundary conditions}
\\[0.5cm]
% \vskip-2cm
{\large\sc Carlo Orrieri$^{(1)}$}\\
{\normalsize e-mail: {\tt orrieri@mat.uniroma1.it}}\\[.25cm]
{\large\sc Luca Scarpa$^{(2)}$}\\
{\normalsize e-mail: {\tt luca.scarpa.15@ucl.ac.uk}}\\[.25cm]
$^{(1)}$
{\small Dipartimento di Matematica, Sapienza Universit\`a di Roma}\\
{\small Piazzale Aldo Moro 5, 00185 Roma, Italy}\\[.2cm]
$^{(2)}$
{\small Department of Mathematics, University College London}\\
{\small Gower Street, London WC1E 6BT, United Kingdom}\\[1cm]
\end{center}       

\begin{abstract}
We prove a well-posedness result for stochastic Allen-Cahn type equations in a bounded domain
coupled with generic boundary conditions. 
The (nonlinear) flux at the boundary aims
at describing the interactions with the hard walls and is motivated by some recent literature in physics. 
\carlo{The singular character of the drift part allows for a large class of 
maximal monotone operators, generalizing the usual double-well potentials.
One of the main novelties of the paper is the absence of any growth condition on the drift term of the evolution,
neither on the domain nor on the boundary.
A well-posedness result for variational solutions of the system is presented using
{\it a~priori} estimates as well as monotonicity and compactness techniques.}
A vanishing viscosity argument for the dynamic on the boundary is also presented.
\\[.5cm]
{\bf AMS Subject Classification:} 35K55, 35K61, 35R60, 60H15, 80A22.\\[.5cm]
{\bf Key words and phrases:} Allen-Cahn equation, dynamic boundary conditions,
						singular potentials, well-posedness, asymptotic estimates.
\end{abstract}

\pagestyle{myheadings}
\newcommand\testopari{\sc Carlo Orrieri and Luca Scarpa}
\newcommand\testodispari{\sc Singular Stochastic Allen-Cahn equations}
\markboth{\testodispari}{\testopari}

%%%%%%%%%%%%%%%%%%%%%%%%%%%%%%%%%%%%%%%%%%%%%%%%

\thispagestyle{empty}

\section{Introduction}
\setcounter{equation}{0}
\label{intro}

Allen-Cahn type equations were introduced within the Van der Waals theory of phase transitions as a basic model 
to describe the evolution of a two-phase fluid. 
Generally, the unknown process is called (non-conserved) \textit{order parameter} and represents the 
normalized density of one of the two involved phases.
The starting point of the theory is the definition of a free energy functional associated to the order parameter 
$x(\cdot)$, which is given by 
\begin{equation}\label{free_en}
\mathcal{E}(x):= \int_D \left[  \frac{1}{2}| \nabla x |^2 + F(x) \right] dz.
\end{equation} 
In the classical setting $F: \Ar \to [0,+\infty)$ is a smooth double well potential representing 
the energy density (e.g. $F(x) = 1/4(x^2-1)^2$) which favours the pure states. 
The gradient part instead, taking into account the interactions at small scales, 
prevents instantaneous jumps between the two pure phases, penalizing the variation of $x$.
Then, the Allen-Cahn equation can be viewed as the $L^2$-gradient flow of 
the Ginzburg-Landau free energy \eqref{free_en}, i.e.~the semilinear parabolic PDE of the form
\begin{equation}\label{allen-cahn}
  \partial_tx - \Delta x + F'(x)=0\,.
\end{equation} 

In order to model the thermal fluctuation of the system it is quite natural to perturb the equation with a random force and,
from a phenomenological point of view, the choice of space time white noise seems vary natural. 
Unfortunately, nonlinear equations such as \eqref{allen-cahn} become ill-posed 
in space dimensions  $N \geq 2$ as soon as a white noise term is added. 
A classical way to bypass the problem is to smooth out the noise via a suitable covariance operator. 
Given $D\subseteq\Ar^N$ ($N\geq2$) a smooth bounded domain with smooth boundary $\Gamma$, 
what we end up with is the so called stochastic Allen-Cahn equation:  
\begin{equation}\label{allen-cahn-s}
dx_t - \Delta x_t\,dt - F'(x_t)\,dt =
  B(t, x_t)\,dW_t \qquad\text{in}\quad (0,T)\times D
\end{equation}  
with a given initial datum
\[
  x(0)=x_0 \quad\text{in}\quad D\,,
\]
where $T>0$ is a fixed final time,
$W$ is a cylindrical Wiener process on a separable Hilbert space $U$
and $B$ is a Hilbert-Schmidt
operator from $U$ to $L^2(D)$ depending on $x$ as well.

In the classical literature on the deterministic and stochastic
Allen-Cahn equation, the order parameter is usually assumed to
satisfy homogeneous Neumann conditions on the boundary $\Gamma$, 
which may be interpreted as a null interaction of the phase-transition
with the hard walls.
In this setting, well-posedness results for the stochastic Allen-Cahn equation
can be found e.g.~in the monograph \cite{daprato2014stochastic}, within the framework of dissipative SPDEs.

More recently, the class of energy functionals has been extended in several models
to take into account also a possible interaction of the phase transition phenomenon with the hard walls.
The main idea is to require that the free energy functional
could (possibly) penalize the variation of $x$ and the pure states also on the boundary $\Gamma$:
to this aim, if we denote with $\nabla_\Gamma$ the surface gradient on the boundary,
the form of the functional becomes 
\begin{equation}\label{free_en_boundary}
\mathcal{E}(x) := \int_D \left[  \frac{1}{2}| \nabla x |^2 + F(x) \right] dz 
+ \int_\Gamma \left[  \frac{\eps}{2}| \nabla_\Gamma y |^2 + F_\Gamma \right] d\zeta\,,
\end{equation}
where $\eps\geq0$ is fixed and 
$F_\Gamma:\Ar\rarr[0,+\infty)$ is another smooth double-well potential acting on the boundary.
Arguing as before, the $L^2$-gradient flow of this generalized free energy describes the following system
\begin{align*}
  \partial_tx - \Delta x+ F'(x) =0 \quad&\text{in } (0,T)\times D\\
  \partial_tx + \partial_{\bf n}x - \eps\Delta_{\Gamma}x+ F'_\Gamma(x) =0\quad&\text{in } (0,T)\times\Gamma\\
  x(0)=x_0 \quad&\text{in } D\\
  x(0)=x_0|_{\Gamma} \quad&\text{in } \Gamma\,,
\end{align*}
where
the symbol $\partial_{\bf n}$ denotes the outward normal derivative on $\Gamma$
and $\Delta_\Gamma$ is the usual Laplace-Beltrami operator.
Note that the presence of a free energy term on the boundary leads to non-standard dynamic conditions on the boundary 
(i.e.~that involve the time derivative of $x$ on $\Gamma$), in contrast with the classical homogeneous Neumann conditions for $x$. Let us point out that in the last few years there has been a lot of interest in
describing phase separation phenomena in
confined systems under general boundary conditions, see e.g. \cite{ball1990spinodal}, \cite{fischer1997novel}.

Finally, since the physical system may be subject also to a thermal fluctuation in the boundary  (due
both to the diffusion from the interior of $D$ and to a boundary noise) the natural idea is to
perturb the equation satisfied by $x$ on $\Gamma$ the same way that we have done for the one in the interior of $D$.
These considerations lead
to consider stochastic boundary conditions of the type
\beq\label{allen-cahn-sb}
  dx_t + \partial_{\bf n}x_t\,dt - \eps\Delta_{\Gamma}x_t\,dt + F_\Gamma'(x_t)\,dt =
  B_\Gamma(t, x_t)\,dW_t^\Gamma \quad\text{in } (0,T)\times\Gamma\,,
\eeq
where here $W^\Gamma$ is a cylindrical Wiener process on another separable Hilbert space $U_\Gamma$,
independent of $W$, and $B_\Gamma$ is a random time-dependent Hilbert-Schmidt
operator from $U_\Gamma$ to $L^2(\Gamma)$ of multiplicative type.

In the present paper we are interested in 
studying the stochastic system arising from the equations \eqref{allen-cahn-s} and \eqref{allen-cahn-sb}
from a more general mathematical perspective.
The main extension that we carry out concerns the form of the double-well potentials:
more precisely, instead of assuming that $F$ and $F_\Gamma$ are smooth functions on $\Ar$,
we simply require that $F=j+G$ and $F_\Gamma=j_\Gamma+G_\Gamma$,
where $j, j_\Gamma:\Ar \to [0,+\infty)$ are given convex functions with 
subdifferentials $\beta=\partial j$ and $\beta_\Gamma=\partial j_\Gamma$ everywhere defined, 
respectively, and $G, G_\Gamma$ are 
smooth functions with Lipschitz differentials $\pi, \pi_\Gamma$, respectively.
This means essentially that we are looking at the double-well potentials $F$ and $F_\Gamma$
as sufficiently smooth concave perturbations of convex potentials.
Bearing in mind these considerations, in the present paper we are concerned with the
following system:
\begin{align}
  \label{1}
  dx_t - \Delta x_t\,dt + \beta(x_t)\,dt + \pi(x_t)\,dt \ni
  B(t, x_t)\,dW_t \quad&\text{in } (0,T)\times D\\
  x=y \quad&\text{in } (0,T)\times\Gamma\\
  \label{2}
  dy_t + \partial_{\bf n}x_t\,dt - \eps\Delta_{\Gamma}y_t\,dt + \beta_\Gamma(y_t)\,dt + \pi_\Gamma(y_t)\,dt \ni
  B_\Gamma(t, y_t)\,dW_t^\Gamma \quad&\text{in } (0,T)\times\Gamma\\
  \label{init1}
  x(0)=x_0 \quad&\text{in } D\\
  \label{init2}
  y(0)=x_0|_{\Gamma} \quad&\text{in } \Gamma\,.
\end{align}
More specifically, we aim at proving well-posedness for problem \eqref{1}--\eqref{init2}
both in the case $\eps>0$ and $\eps=0$, as well as a suitable continuity of the solutions with respect to 
the parameter $\eps\geq0$, under no restrictive growth assumptions on the potentials.

Mathematical results concerning the Allen-Cahn (and similarly Cahn-Hillard) equation have been obtained recently
in the framework of generalized (deterministic) boundary dynamics. Let us mention e.g.~\cite{cal-colli,
col-gil-spr, grass, Liero2013bulk,
colli2015optimal}  and the references therein. On the contrary, not much is known on the stochastic counterpart, 
where one is interested in the dynamical impact of a noise term on the boundary. In this direction we have to mention
\cite{barbu2015stochastic, bonaccorsi2014variational}, where the authors study (nonlinear) diffusion problems 
with stochastic boundary conditions in a variational framework, and  \cite{yang2007impact} where long-time properties 
of the Cahn-Hillard equation are investigated.

\carlo{Concerning general well-posedness results for stochastic PDEs, 
let us mention \cite{gess2012strong} and the references therein, where unique existence 
of analytical strong solutions for a large class of SPDEs of gradient type is exhibited.
In that case, a crucial hypothesis used by the author is the sub-homogeneous character of the potential, 
which unfortunately forbids e.g. exponential growth.
In order to avoid any growth condition, in the present paper we only ask that $D(\beta) = \Ar$.
Still, this hypothesis does not seem to be optimal: it is not needed for the well-posedness of deterministic systems 
(see e.g. \cite{cal-colli}),
whereas in the stochastic formulation seems to be essential, at least in the approach we develop.}
Actually, in our case of interest $\beta = \partial j$ with $j$ being a convex potential, this restriction on the domain is the 
most general assumption in literature: it was considered for the first time in \cite{barbu}
in a problem related to existence of semilinear Laplace-driven stochastic equations and then in \cite{mar-scar}
when studying well-posedness for a class of abstract semilinear SPDEs with singular drift, avoiding any conditions 
on the growth of $\beta$. Again, let us remark that we are not able to consider a graph of the form 
$\tilde \beta = \partial I_{[-1,1]}$, but only an approximation of it defined everywhere in~$\Ar$.
With respect to the result obtained in \cite{mar-scar}, tailored for a large class of singular dissipative SPDEs, 
here we focus on a precise choice of diffusion operator, the Laplacian. This is motivated by the physical description 
of the model, but greater generality can be achieved without any substantial change in the proof.  
Within this framework, 
the key idea is to get good estimates in expectation and produce the pathwise counterpart in a set $\Omega'$
of probability $1$ using a suitable regularization on the noise.

The strategy of the proof is as follows.  We start by rewriting the system with additive noise as an equation for the pair
$(x,y)$ in the product space $L^2(D) \times L^2(\Gamma)$. Here we develop a variational approach \`a la Krylov,
Rozovski{\u\i} and Pardoux. In particular we define a suitable Gelfand triple and we smooth out the equation via Yosida
approximations of the singular part. In this way, the approximated system satisfies the usual assumptions and a version of the 
It\^o formula can be applied. We derive estimates in expectation of the solution as well as of the monotone maps. By 
compactness we pass to the limit pathwise to get a candidate limit equation and we identify the drift part as an element 
of the maximal monotone graphs $\beta$ and $\beta_\Gamma$. 
Then we recover uniqueness of the solution which is essential to infer 
measurability in $\omega$ of the limit. At last we generalize the well-posedness result also to noises of 
multiplicative type using a standard fixed-point argument.

A crucial step to get uniqueness of the solution is the application of the It\^o formula, 
for which a suitable smoothing of the equation is required. A classical way of proceeding is to apply
the resolvent operator of the diffusion to the equation itself. In our case of interest, rewriting the equation in the 
product space $L^2(D) \times L^2(\Gamma)$, the diffusion operator is not ``standard'' and has the form
\begin{equation}
\C_\eps(x,y) := (-\Delta x , \partial_{\bf n}x - \eps\Delta_\Gamma y),
\end{equation}        
where $y = \tau x$ is the trace part. Hence, given a couple of functions $(f,g)$, 
one needs to study the smoothing effect of the resolvent $(I + \delta\C_\eps)^{-1}$
through an \textit{ad hoc} regularity analysis 
of the associated elliptic system
\[\begin{cases}
  u-\Delta u = f \quad&\text{in } D\,,\\
  u=v\,, \quad u+\partial_{\bf n}u-\eps\Delta_\Gamma v = g \quad&\text{in } \Gamma\,.
\end{cases}
\]
Precisely, what we show is the ultracontractivity of $(I + \delta\C_\eps)^{-1}$ from  $L^1(D)\times L^1(\Gamma)$ to 
$L^\infty(D)\times L^\infty(\Gamma)$. To this aim, we generalize a classical regularity result 
by Stampacchia for elliptic equations with homogeneous boundary conditions
contained in \cite{stamp}
and subsequently prove a version of the maximum principle with 
data $(f,g) \in L^1(D)\times L^1(\Gamma)$. 
Let us note that the study of this operator forces us to impose some additional constraints 
on the relative growth of $\beta$ and $\beta_\Gamma$, which are indeed quite 
natural from the point of view of the physical applications.   
All the results mentioned above are collected in the Appendix.

Throughout the paper we use the parameter $\eps$ to indicate the presence of the diffusion operator 
$\eps \Delta_\Gamma$ on the boundary. The presence/absence of this term creates a gap between 
the effective domain of $\C_\eps$, when $\eps > 0$ and $\eps = 0$.
Form the physical point of view, a vanishing viscosity argument on the boundary becomes 
interesting as it is related to the formation of sharp interfaces between the two phases. 
What we show is the well-posedness of the problem in the singular case $\eps = 0$ as well 
as the continuous dependence of the solutions to the system \eqref{1}, \eqref{2} with respect to the variation of $\eps \geq 0$.

The paper is organised as follows: in the first section we introduce the notations, assumptions and 
we present the main results. Section 3 is devoted to proving the well-posedness of the system with additive noise:
here, we study the approximated equation and we pass to the limit using compactness arguments. 
In Sections 4 and 5 we extend the previous result to the Allen-Cahn equation with multiplicative noise and 
we study the asymptotic behaviour of the system as $\eps \to 0$. Finally, in the Appendix, 
we derive the smoothing properties of the diffusion operator $\C_\eps$.

%%%%%%%%%%%%%%%%%%%%%%%%%%%%%%%%%%%%%%%%%%%%%%%%

\section{Notation, setting and main results}
\setcounter{equation}{0}
\label{results}

In the section we state the notation that we use and the precise assumptions of the work;
moreover, the concept of solution and the main results are presented.

\subsection{Notation}
Throughout the paper, $\left(\Omega, \f, \mathbb{F}, \Pi\right)$ is a filtered probability space,
with the filtration $\mathbb{F}=\left(\f_t\right)_{t\in[0,T]}$
satisfying the so-called "usual conditions" (i.e.~it is saturated and right continuous).
As we have anticipated, $D\subseteq\Ar^N$ is a smooth bounded domain with smooth boundary $\Gamma$
and $T>0$ is a fixed time.

If $U$ is a Banach space, for any $t\in[0,T]$
we use the classical notations $L^p(\Omega, \f_t, \Pi; U)$ and $L^p(0,T; U)$ for the classes of
$U$-valued $p$-Bochner-integrable functions on $\Omega$ and $(0,T)$, respectively
(without specifying the $\sigma$-algebra and the probability measure if $t=T$).
The symbol $C^0_w([0,T]; U)$ denotes the space of continuous functions from $[0,T]$ to
the space $U$ endowed with the weak topology. Moreover,
if $U_1$ and $U_2$ are separable Hilbert Spaces, we may write $\cL(U_1, U_2)$ and $\cL_2(U_1, U_2)$ to indicate 
the spaces of the linear continuous operators and Hilbert-Schmidt operators from $U_1$ to $U_2$, respectively.

Let $\tau$ be the trace operator $\tau: H^1(D)\rarr L^2(\Gamma)$.
Recall that the rank of $\tau$ coincides with the boundary Sobolev space $H^{1/2}(\Gamma)$
and there is a constant $M>0$ such that $\l|\tau u\r|_{H^{1/2}(\Gamma)}\leq M\l|u\r|_{H^1(D)}$ for any $u\in H^1(D)$
(see \cite[Thm.~7.39]{adams} and \cite[p.~14]{barbu_monot}):
hence the operator $\tau: H^1(D)\rarr H^{1/2}(\Gamma)$ is well-defined, linear and continuous.
Moreover, it is worth recalling that for any $k\geq1$, we have that $\tau \in \cL(H^k(D), H^{k-1/2}(\Gamma))$.

The symbol $\Delta_\Gamma$ denotes the usual Laplace-Beltrami operator on $L^2(\Gamma)$, i.e.
\[
\Delta_\Gamma:L^2(\Gamma)\rarr L^2(\Gamma)\,,\qquad
\Delta_\Gamma v:=\operatorname{div}_\Gamma\nabla_\Gamma v\,, \quad v\in D(\Delta_\Gamma):=H^2(\Gamma)\,,
\]
where $\nabla_\Gamma:=(\partial_{\tau_1}, \ldots, \partial_{\tau_{N-1}})$ is the Riemannian gradient on $\Gamma$
and $\partial_{\tau_i}$ is the derivative along the $i$-th tangential direction $\tau_i$ for every $i\in\{1,\ldots,N-1\}$.
Let us recall that the operator $-\Delta_\Gamma$ is maximal monotone on $L^2(\Gamma)$; moreover, for any $\delta>0$
and $k\in\En$,
its resolvent $(I-\delta\Delta_\Gamma)^{-1}$ belongs to $\cL(H^{k-1}(\Gamma), H^{k+1}(\Gamma))$.

For every $a,b\geq0$, we use the classical notation $a\lesssim b$ to mean that there
exists a positive constant $C$ such that $a\leq Cb$.

\subsection{Assumptions}
We precise here the assumptions that are in order throughout the work.

\noindent{\bf Assumptions on the double-well potentials.} We assume
\begin{gather*}
  \beta,\,\beta_\Gamma:\Ar\rarr2^\Ar \quad\text{maximal monotone}\,, \quad
  0\in\beta(0)\cap\beta_\Gamma(0)\,,\quad D(\beta)=D(\beta_\Gamma)=\Ar\,,\\
  \pi,\,\pi_\Gamma:\Ar\rarr\Ar \quad\text{Lipschitz continuous with Lipschitz constants } C_\pi, C_{\pi_\Gamma}\,. 
\end{gather*}
In this setting, the following proper, convex and lower semicontinuous functions are well-defined:
\[
  j,\,j_\Gamma:\Ar\rarr[0,+\infty)\,, \text{ such that }\quad\partial j=\beta\,, \quad \partial j_\Gamma=\beta_\Gamma\,, \quad j(0)=j_\Gamma(0)=0\,. 
\]
\carlo{Since $\beta$ and $\beta_\Gamma$ are everywhere defined, $j$ and $j_{\Gamma}$ are actually continuous. Moreover,} 
the convex conjugates of $j$ and $j_\Gamma$, i.e.
  \[
  j^*, j_\Gamma^*:\Ar\rarr[0,+\infty]\,, \qquad
  j^*(s):=\sup_{r\in\Ar}\{rs-j(r)\}\,, \quad
  j_\Gamma^*(s):=\sup_{r\in\Ar}\{rs-j_\Gamma(r)\}\,,
  \]
are superlinear at infinity (see \cite[Prop.~1.8]{barbu_monot}). Precisely, we have
  \[
  \lim_{|s|\rarr\infty}\frac{j^*(s)}{|s|}=\lim_{|s|\rarr\infty}\frac{j_\Gamma^*(s)}{|s|}=+\infty\,.
  \]
We need to make some further hypotheses on $j$ and $j_\Gamma$. Firstly, we require 
a symmetry property for the growth of the two potentials at infinity, namely
\[
  \limsup_{|r|\rarr\infty}\frac{j(r)}{j(-r)}<+\infty \qquad\text{and}\qquad
 \limsup_{|r|\rarr\infty}\frac{j_\Gamma(r)}{j_\Gamma(-r)}<+\infty\,,
\]
which is very common in literature (see \cite{mar-scar, mar-scar2, mar-scar3, scar, barbu, barbu-prato-rock}). 
Secondly, a natural assumption to make is that
\beq
  \tag{H1}\label{H1}
  j(r)\lesssim1+j_\Gamma(r)\,, \quad j_\Gamma(r)\lesssim1+j(r) \qquad\forall\,r\in\Ar\,,
\eeq
which means essentially that $j$ and $j_\Gamma$ control each other at $+\infty$. 
If we keep in mind the physical interpretation of the problem, \eqref{H1} is very reasonable
and can be reinterpreted as the requirement that the nonlinear flux on the boundary
is of the same type as the one in the interior of the domain.

\noindent However, note that condition \eqref{H1}
is much stronger than the corresponding one in the deterministic case,
in which it is sufficient to assume just one of the two inequalities (see \cite{cal-colli}).
Consequently, for sake of completeness, it is worth introducing two other possible hypotheses, 
in which the potentials are allowed to have different growth at infinity,
provided that they are bounded by specific polynomial functions:
\begin{gather}
  \tag{H2}\label{H2}
  j(r)\lesssim1+j_\Gamma(r)\,, \quad
  \begin{cases}
    j_\Gamma(r)\lesssim1+|r|^{\frac{2N}{N-2}} \quad&\text{if } N>2\\
    \exists\,p\geq1\,:\; j_\Gamma(r)\lesssim1+|r|^p \quad&\text{if } N=2
  \end{cases}
  \qquad\forall\,r\in\Ar\,,\\
   \tag{H3$_{\eps>0}$}\label{H3}
  j_\Gamma(r)\lesssim1+j(r)\,, \quad
  \begin{cases}
    j(r)\lesssim1+|r|^{\frac{2(N-1)}{N-3}} \quad&\text{if } N>3\\
    \exists\,p\geq1\,:\; j(r)\lesssim1+|r|^p \quad&\text{if } N=3\\
    \text{no restrictions on } j \quad&\text{if } N=2
  \end{cases}
  \qquad\forall\,r\in\Ar\,,\\
  \tag{H3$_{\eps=0}$}\label{H3'}
  j_\Gamma(r)\lesssim1+j(r)\,, \quad
  \begin{cases}
    j(r)\lesssim1+|r|^{\frac{2(N-1)}{N-2}} \quad&\text{if } N>2\\
    \exists\,p\geq1\,:\; j(r)\lesssim1+|r|^p \quad&\text{if } N=2
  \end{cases}
  \qquad\forall\,r\in\Ar\,.
\end{gather}
Let us comment on these conditions, focusing in particular on the cases
$N=2,3$, which are the most interesting in terms of applications.
In \eqref{H2} we are requiring that $j$ is controlled 
by $j_\Gamma$ and that $j_\Gamma$ is bounded by a polynomial of
degree six if $N=3$, or by any generic polynomial if $N=2$.
The second hypothesis requires instead that $j_\Gamma$ is controlled by $j$, and it
depends on wether we are working with $\eps>0$ or $\eps=0$.
In the case $\eps>0$, we can assume that $j$
has any polynomial growth if $N=3$, or any arbitrary growth if $N=2$.
In the case $\eps=0$, $j$ has to be controlled by a polynomial of degree four if $N=3$,
or by any generic polynomial if $N=2$.
In particular, note that the classical double-well potentials of degree four are included in
the interesting cases $N=2,3$.

\noindent Polynomial growths of this type for the potentials
have been widely used in the deterministic setting, also in the framework of
Cahn-Hilliard and quasilinear equations. Among the great literature, 
we can mention the works \cite{grass, grass2, escher, sprek-wu, cfp} and 
the references therein.

\noindent Throughout the paper, we will assume either hypothesis \eqref{H1} or \eqref{H2} or \eqref{H3}--\eqref{H3'}.

\noindent We introduce also the multivalued operator
\[
  \gamma:\Ar^2\rarr2^{\Ar^2}\,, \qquad\gamma(u,v):=\{(r,w)\in\Ar^2: r\in\beta(u), w\in\beta_\Gamma(v)\}\,, \quad (u,v)\in\Ar^2
\]
and the proper, convex and lower semicontinuous function
\[
  k:\Ar^2\rarr[0,+\infty)\,, \quad k(u,v):=j(u)+j_\Gamma(v)\,, \quad(u,v)\in\Ar^2\,:
\]
it is not difficult to check that $\gamma$ is maximal monotone on $\Ar^2$ and 
\[
  0\in\gamma(0)\,,\quad D(\gamma)=\Ar^2\,, \quad \gamma=\partial k\,, \quad k(0)=0\,.
\]
Finally, we define
\[
  \P:\Ar^2\rarr\Ar^2\,, \qquad 
  \P(u,v):=(\pi(u),\pi_\Gamma(v))\,, \quad(u,v)\in\Ar^2\,,
\]
which is Lipschitz continuous with Lipschitz constant $C_\P:=\max\{C_\pi, C_{\pi_\Gamma}\}$.

\noindent{\bf Assumptions on the variational setting.} For any $\eps\geq0$, we define the spaces
\[
  H:=L^2(D)\,, \quad H_\Gamma:=L^2(\Gamma)\,, \quad V:=H^1(D)\,, \quad
  V_\Gamma^\eps:=\begin{cases}
  H^1(\Gamma) \quad&\text{if}\quad \eps>0\,,\\
  H^{1/2}(\Gamma) &\text{if}\quad \eps=0\,,
  \end{cases}
\]
endowed with their natural norms $\l|\cdot\r|_H$, $\l|\cdot\r|_{H_\Gamma}$, 
$\l|\cdot\r|_V$ and $\l|\cdot\r|_{V^\eps_\Gamma}$, respectively.
We also use the notations $(\cdot,\cdot)_H$, $(\cdot,\cdot)_{H_\Gamma}$,
$\left<\cdot, \cdot\right>_V$ and $\left<\cdot, \cdot\right>_{V_\Gamma^\eps}$
for the standard scalar products of $H$ and $H_\Gamma$
and the duality pairings between $V^*$ and $V$, $(V_\Gamma^\eps)^*$ and $V_\Gamma^\eps$, respectively. 
In this way, for every $\eps\geq0$, $(V, H, V^*)$ 
and $(V_\Gamma^\eps, H_\Gamma, (V_\Gamma^\eps)^*)$ are Hilbert triplets with
compact inclusions $V\cembed H$ and $V_\Gamma^\eps\cembed H_\Gamma$.
The variational setting is obtained considering the product spaces
\[
  \H:=H\times H_\Gamma\,, \quad \V_\eps:=\{(u,v)\in V\times V^\eps_\Gamma: v=\tau u\}
\]
endowed with the norms
\begin{align*}
  \l|(u,v)\r|_\H&:=\sqrt{\l|u\r|^2_H+\l|v\r|^2_{H_\Gamma}} \qquad\forall\,(u,v)\in\H\,,\\
  \l|(u,v)\r|_{\V_\eps}&:=\sqrt{\l|u\r|^2_V+\l|v\r|^2_{V^\eps_\Gamma}} \qquad\forall\,(u,v)\in\V_\eps\,.
\end{align*}
Let us recall that, using the continuity of $\tau:H^1(D)\rarr H^{1/2}(\Gamma)$,
an equivalent norm on $\V_\eps$ is given by
\[
|(u,v)|_{\V_\eps}:=\sqrt{\l|u\r|_V^2+ \eps\l|\nabla v\r|_{H_\Gamma}^2} \qquad\forall(u,v)\in\V_\eps\,.
\]
\carlo{Notice that the above equivalence is not uniform in $\varepsilon$.}
It is clear that $\H$ is a Hilbert space with respect to the scalar product
\[
  \left((u_1,v_1),(u_2,v_2)\right)_\H:=(u_1,u_2)_H+(v_1, v_2)_{H_\Gamma} \quad\forall\,(u_1,v_1),(u_2,v_2)\in \H\,,
\]
and that, for every $\eps\geq0$,
$\V_\eps$ is included in $\H$ continuously and densely, so that $(\V_\eps, \H, \V_\eps^*)$
is a Hilbert triplet.
Now, it is natural to introduce the operator
\begin{gather*}
  \widetilde{\A}_\eps:C^\infty(\overline{D})\times 
  \tau\left(C^\infty(\overline{D})\right)\rarr C^\infty(\overline{D})\times C^\infty(\Gamma)\,,\\
  \widetilde{\A}_\eps(u,v):=\left(-\Delta u + \pi(u), \partial_{\bf n}u-\eps\Delta_\Gamma v + \pi_\Gamma(v)\right)\,.
\end{gather*}
Then it is immediate to check that for every $(\varphi, \psi)\in \V_\eps$ we have
\[
  \left(\widetilde{\A}_\eps(u,v), (\varphi, \psi)\right)_{\H}=\int_D\nabla u\cdot\nabla\varphi + \int_D\pi(u)\varphi
  +\eps\int_\Gamma\nabla_\Gamma v\cdot\nabla_\Gamma\psi + \int_\Gamma\pi_\Gamma(v)\psi\,.
\]
Analysing separately the cases $\eps>0$ and $\eps=0$ and
using the fact that $\tau: H^1(D)\rarr L^2(\Gamma)$ is continuous,
it is not difficult to check that the previous expression defines a linear continuous functional on $\V_\eps$ for every $\eps\geq0$.
Hence, the operator $\widetilde{\A}_\eps$ can be extended to 
\begin{gather*}
  \A_\eps: \V_\eps\rarr\V_\eps^*\,, \\
  \begin{split}
  \left<\A_\eps(u,v), (\varphi,\psi)\right>_{\V_\eps}:&=
  \int_D\nabla u\cdot\nabla\varphi + \int_D\pi(u)\varphi
  +\eps\int_\Gamma\nabla_\Gamma v\cdot\nabla_\Gamma\psi + \int_\Gamma\pi_\Gamma(v)\psi\\
  &\forall\,(u,v), (\varphi,\psi)\in\V_\eps\,.
  \end{split}
\end{gather*}
In the sequel, we will denote by $\C_\eps:\V_\eps\rarr\V_\eps^*$ the linear component of $\A_\eps$, i.e.
\carlo{\[
  \left<\C_\eps(u,v),(\varphi,\psi)\right>_{\V_\eps}:=\int_D\nabla u\cdot\nabla \varphi + 
  \eps\int_\Gamma\nabla_\Gamma v\cdot\nabla_\Gamma \psi \,, \quad
  (u,v), (\varphi,\psi) \in\V_\eps\,,
\]}
so that we have the representation $\A_\eps=\C_\eps+\P$, where we have used the same symbol for $\P$
and its corresponding Lipschitz operator induced on $\H$.

\noindent{\bf Assumptions on the noises. } Let $W$ and $W_\Gamma$ be two independent
cylindrical Wiener processes on two separable Hilbert spaces $U$ and $U_\Gamma$, respectively.
We introduce
\begin{gather*}
  B:\Omega\times[0,T]\times H\rarr\cL_2(U, H) \quad\text{progressively measurable}\,,\\
  B_\Gamma:\Omega\times[0,T]\times H_\Gamma\rarr\cL_2(U_\Gamma, H_\Gamma) \quad\text{progressively measurable}\,.
\end{gather*}
Then, setting
\begin{gather*}
  \mathcal{U}:=U\times U_\Gamma\,, \quad \mathcal{W}:=\begin{pmatrix}W\\W_\Gamma\end{pmatrix}\,,\\
  \B:\Omega\times[0,T]\times\H\rarr\cL_2(\U, \H)\,, \quad
  \B:=\begin{bmatrix}B & 0\\0 & B_\Gamma \end{bmatrix}\,,
\end{gather*}
we have that $\W$ is a cylindrical Wiener process on $\U$ and $\B$ is progressively measurable.
Moreover, we assume that $B$ and $B_\Gamma$ are
Lipschitz-continuous and at most with linear growth in their third arguments, uniformly on $\Omega\times[0,T]$, 
i.e.~that
there exists a positive constant $C$ such that
\begin{gather*}
  \l|B(\cdot,\cdot,u_1)-B(\cdot,\cdot,u_2)\r|_{\cL_2(U,H)}\leq C\l|u_1-u_2\r|_H \quad\forall\,u_1,u_2\in H\,,\\
  \l|B_\Gamma(\cdot,\cdot,v_1)-B_\Gamma(\cdot,\cdot,v_2)\r|_{\cL_2(U_\Gamma,H_\Gamma)}
  \leq C\l|v_1-v_2\r|_{H_\Gamma} \quad\forall\,v_1,v_2\in H_\Gamma\,,\\
  \l|B(\cdot,\cdot,u)\r|_{\cL_2(U,H)}\leq C\left(1+\l|u\r|_H\right) \quad\forall\, u\in H\,,\\
  \l|B(\cdot,\cdot,v)\r|_{\cL_2(U_\Gamma,H_\Gamma)}\leq C\left(1+\l|v\r|_{H_\Gamma}\right) \quad\forall\, u\in H_\Gamma\,.
\end{gather*}
Then, it is clear that the same hypotheses hold also for $\B$ in its corresponding spaces.

\noindent{\bf Assumption on the initial datum.} We assume that the initial datum satisfies
\[
  (x_0,y_0) \in L^2\left(\Omega, \f_0, \Pi; \H\right)\,.
\]

\subsection{Formulation of the problem and main results}
In this setting, we can write the SPDE of the joint process $(x_t, y_t)$ as follows:
\beq
  \label{prob}
  d(x_t,y_t) + \A_\eps(x_t, y_t)\,dt + \gamma(x_t, y_t)\,dt \ni \B(t,x_t, y_t)\,d\W_t\,, \quad (x(0), y(0))=(x_0, y_0)\,.
\eeq

\begin{defin}
  \label{solution}
  A strong solution to problem \eqref{prob} is a quadruplet $(x,y,\xi,\xi_\Gamma)$ such that
  \begin{gather}
  (x,y) \in L^2\left(\Omega; L^\infty(0,T; \H)\right) \cap L^2\left(\Omega\times(0,T); \V_\eps\right)\,,\\
  \xi \in L^1\left(\Omega\times(0,T)\times D\right)\,, \qquad \xi_\Gamma \in L^1\left(\Omega\times(0,T)\times \Gamma\right)\,,\\
  (x,y) \in C^0_w\left([0,T]; \H\right) \quad\Pi\text{-a.s.}\,,\\
  j(x)+j^*(\xi)\in L^1\left(\Omega\times(0,T)\times D\right)\,, \qquad
  j_\Gamma(y)+j^*_\Gamma(\xi_\Gamma)\in L^1\left(\Omega\times(0,T)\times \Gamma\right)\,,\\
  \xi\in\beta(x) \quad\text{a.e.~in } \Omega\times(0,T)\times D\,, \qquad
  \xi_\Gamma\in\beta_\Gamma(y) \quad\text{a.e.~in } \Omega\times(0,T)\times \Gamma\,,\\
  (x,y,\xi,\xi_\Gamma) \quad\text{is predictable}\,,\\
  \B(\cdot,x,y) \quad\text{is progressively measurable}\,,\\
  \begin{split}
  (x(t), y(t)) &+ \int_0^t\A_\eps(x(s), y(s))\,ds + \int_0^t(\xi(s), \xi_\Gamma(s))\,ds\\
  &=(x_0, y_0) + \int_0^t\B(s,x(s),y(s))\,d\W_s \quad\forall\,t\in[0,T]\,, \quad\Pi\text{-a.s.}\,.
  \end{split}
  \end{gather}
\end{defin}

\begin{defin}
  \label{well-p}
  We say that problem \eqref{prob} is well-posed for a given $\eps\geq0$ if
  for any initial datum $(x_0,y_0)\in L^2(\Omega,\f_0,\Pi; \H)$
  there exists a unique strong solution to \eqref{prob} in the sense of Definition~\ref{solution} and the
  following solution map is Lipschitz-continuous:
  \[
  \Lambda_\eps: L^2\left(\Omega; \H\right) \rarr L^2\left(\Omega; L^\infty(0,T; \H)\right) \cap 
  L^2\left(\Omega\times(0,T); \V_\eps\right)\,,\qquad
  (x_0,y_0) \mapsto (x,y)\,.
  \]
\end{defin}

\begin{thm}
  \label{teo1}
  The problem \eqref{prob} is well-posed for any $\eps\geq0$.
\end{thm}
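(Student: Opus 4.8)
The plan is to follow the roadmap of the Introduction: first establish well-posedness in the \emph{additive} case, in which $\B$ does not depend on $(x,y)$, and then recover the multiplicative case by a fixed-point argument. For the additive case I would regularise the singular drift by its Yosida approximation, replacing $\gamma$ by $\gamma_\lambda$ for $\lambda>0$. The operator $\C_\eps+\P+\gamma_\lambda:\V_\eps\rarr\V_\eps^*$ is then single-valued, hemicontinuous, monotone up to the Lipschitz perturbation $\P$, coercive on the Hilbert triplet $(\V_\eps,\H,\V_\eps^*)$ and of at most linear growth (the coercivity of $\C_\eps$, with the usual lower-order correction, handling both $\eps>0$ and $\eps=0$ through the respective $\V_\eps$). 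Hence the classical variational theory of Krylov, Rozovski{\u\i} and Pardoux yields, for each $\lambda>0$, a unique variational solution $(x_\lambda,y_\lambda)\in L^2(\Omega;C^0([0,T];\H))\cap L^2(\Omega\times(0,T);\V_\eps)$ of the approximate problem, to which the It\^o formula applies.

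The second step is a family of uniform-in-$\lambda$ estimates. Applying It\^o's formula to $\tfrac12\l|(x_\lambda,y_\lambda)\r|_\H^2$ and using the coercivity of $\C_\eps$, the Lipschitz bound on $\P$, the linear growth of $\B$, the Burkholder--Davis--Gundy inequality and Gronwall's lemma, I expect
\[
\E\l|(x_\lambda,y_\lambda)\r|^2_{L^\infty(0,T;\H)}+\E\l|(x_\lambda,y_\lambda)\r|^2_{L^2(0,T;\V_\eps)}\lesssim 1+\E\l|(x_0,y_0)\r|_\H^2
\]
uniformly in $\lambda$. The delicate bound is the one on the monotone terms: since no growth condition on $\beta,\beta_\Gamma$ is imposed, $\gamma_\lambda(x_\lambda,y_\lambda)$ is controlled only in $L^1(\Omega\times(0,T)\times D)\times L^1(\Omega\times(0,T)\times\Gamma)$. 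Here the Appendix enters: multiplying the equation by a resolvent-regularised version of $\gamma_\lambda(x_\lambda,y_\lambda)$ and exploiting the ultracontractivity of $(I+\delta\C_\eps)^{-1}$ from $L^1\times L^1$ to $L^\infty\times L^\infty$, together with the maximum principle, one bounds $\xi_\lambda:=\beta_\lambda(x_\lambda)$ and $\xi_{\Gamma,\lambda}:=\beta_{\Gamma,\lambda}(y_\lambda)$ in $L^1$. Via the Young identity and the superlinearity of $j^*,j^*_\Gamma$, de la Vall\'ee--Poussin then gives uniform integrability of $\{j^*(\xi_\lambda)\}$ and $\{j^*_\Gamma(\xi_{\Gamma,\lambda})\}$. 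This is precisely the step that forces the relative growth hypotheses \eqref{H1}--\eqref{H3'}.

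The third step is the passage to the limit, and I expect this to be the main obstacle. From the estimates I extract, along a subsequence, weak-$*$ limits $(x,y),\xi,\xi_\Gamma$ in the respective spaces, while the compact embedding $\V_\eps\cembed\H$ combined with a time-regularity estimate yields strong convergence of $(x_\lambda,y_\lambda)$ in $L^2(\Omega\times(0,T);\H)$ and weak continuity in $C^0_w([0,T];\H)$. The crux is identifying $\xi\in\beta(x)$ and $\xi_\Gamma\in\beta_\Gamma(y)$ without coercive control on the drift: I would close this by the standard $\limsup$ argument for maximal monotone operators, showing that the limit superior of $\E\int_0^T(\xi_\lambda,x_\lambda)_H+\E\int_0^T(\xi_{\Gamma,\lambda},y_\lambda)_{H_\Gamma}$ does not exceed the corresponding quantity for the limit, the energy identity from It\^o providing exactly this inequality and the uniform integrability guaranteeing that no mass is lost. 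To avoid enlarging the probability space, which would destroy adaptedness, the convergence is carried out pathwise on a set of full measure, and measurability of the limit in $\omega$ is recovered \emph{a posteriori} from uniqueness.

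Uniqueness is then obtained by writing the equation for the difference of two solutions, applying It\^o's formula to the $\H$-norm of the difference and using the monotonicity of $\gamma$ and $\C_\eps$ together with the Lipschitz bounds on $\P$ and $\B$ and Gronwall; the same computation yields the Lipschitz continuity of $\Lambda_\eps$. Finally, the multiplicative case follows by a contraction argument: freezing the noise coefficient along a given process reduces to the additive result and defines a solution map, and the Lipschitz continuity of $B,B_\Gamma$, via It\^o and Burkholder--Davis--Gundy on a short interval iterated over $[0,T]$, produces a unique fixed point, which is the desired strong solution. The backbone of the whole argument, and the reason the assumption $D(\beta)=D(\beta_\Gamma)=\Ar$ is needed, is the $L^1$-theory for $\C_\eps$ of the Appendix: it is what lets the It\^o formula and the a~priori estimates survive the complete absence of growth conditions on the drift.
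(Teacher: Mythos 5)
Your proposal follows the paper's overall architecture (Yosida regularisation plus Krylov--Rozovski{\u\i}--Pardoux theory for the approximated problem, a priori estimates, pathwise compactness with measurability recovered a posteriori from uniqueness, and a contraction argument for multiplicative noise), but the key technical tools are attached to the wrong steps, and at the two places where the real difficulty sits the argument does not go through as written. First, the $L^1$ bound on the monotone terms does not require the Appendix at all: since $\beta_\lambda(x_\lambda)x_\lambda\geq 0$ and $\beta_{\Gamma,\lambda}(y_\lambda)y_\lambda\geq 0$, It\^o's formula for $\l|\cdot\r|^2_\H$ applied to the approximated equation already controls $\l|\beta_\lambda(x_\lambda)x_\lambda\r|_{L^1(\Omega\times(0,T)\times D)}$ and its boundary analogue (Lemma~\ref{ito}); weak $L^1$ compactness of $\xi_\lambda=\beta_\lambda(x_\lambda)$ then follows from $j^*(\beta_\lambda(x_\lambda))\leq\beta_\lambda(x_\lambda)x_\lambda$, the superlinearity of $j^*$, de la Vall\'ee Poussin and Dunford--Pettis. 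Your mechanism (testing by a resolvent-regularised $\gamma_\lambda$, ultracontractivity, maximum principle) is neither needed nor obviously workable there, and the hypotheses \eqref{H1}--\eqref{H3'} are not what this step forces. Moreover, your pathwise compactness needs pathwise (not just expectation) bounds, and the paper can only obtain these after first assuming the additive noise is $\Z$-valued with $\Z\embed\V_\eps\cap L^\infty(D)\times L^\infty(\Gamma)$ (assumption \eqref{B_strong}): one subtracts $\B\cdot\W$ from the solution and uses the Young--Fenchel inequality, where $j(2(B\cdot W))\in L^1$ precisely because $B\cdot W$ is essentially bounded and $j$ is continuous --- this, not the $L^1$-theory of the Appendix, is where $D(\beta)=D(\beta_\Gamma)=\Ar$ is essential. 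General additive noise is recovered afterwards by smoothing the noise with $(I+\delta\C_\eps)^{-m}$ and invoking continuous dependence; this intermediate two-tier treatment of the additive case is missing from your proposal.

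Second, and more seriously: your identification of $\xi\in\beta(x)$ appeals to ``the energy identity from It\^o'' for the limit equation, and your uniqueness step applies ``It\^o's formula to the $\H$-norm of the difference'' as if it were standard. Neither is available: the limit drift $(\xi,\xi_\Gamma)$ lies only in $L^1$, which in the absence of any growth condition does not map into $\V_\eps^*$, so the variational It\^o formula simply does not apply to the limit equation --- this is the central obstruction of the whole paper. The paper identifies the nonlinearities pathwise via Brezis' lemma, which needs only a.e.\ convergence of the resolvents $(I+\lambda'\beta)^{-1}x_{\lambda'}$, weak $L^1$ convergence of $\beta_{\lambda'}(x_{\lambda'})$, and the uniform bound on $\int\beta_{\lambda'}(x_{\lambda'})(I+\lambda'\beta)^{-1}x_{\lambda'}$, with no energy identity for the limit. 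For uniqueness and continuous dependence, the equation for the difference is first smoothed by $(I+\delta\C_\eps)^{-m}$, ultracontractive from $L^1(D)\times L^1(\Gamma)$ to $L^\infty(D)\times L^\infty(\Gamma)$ (Corollary~\ref{cor_Ceps}); the classical energy identity is applied to the smoothed processes, and the limit $\delta\searrow0$ is taken via Vitali's theorem, the required uniform integrability of the products being exactly Corollary~\ref{cor_Ceps2} --- and this, not the a priori estimates, is where \eqref{H1}, \eqref{H2} or \eqref{H3}--\eqref{H3'} enter. Your closing remark that the $L^1$-theory of the Appendix is what ``lets the It\^o formula survive'' shows the right instinct, but in the body of your argument that theory is deployed where it is not needed and omitted where it is indispensable; as written, the identification and uniqueness steps contain genuine gaps.
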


\begin{thm}
  \label{teo2}
  Let $(x_\eps, y_\eps, \xi_\eps, \xi_{\Gamma,\eps})$ and $(x,y,\xi,\xi_\Gamma)$ be the unique strong solutions to \eqref{prob}
  with additive noise
  given by Theorem \ref{teo1} in the cases $\eps>0$ and $\eps=0$, respectively. 
 Then for every sequence $\{\eps_k\}_{k\in\En}$, with $\eps_k\searrow0$ as $k\rarr\infty$,
  we have, as $k\rarr\infty$,
  \begin{gather}
  \label{c1}
  (x_{\eps_k}, y_{\eps_k})\rarr (x,y) \quad\text{in } L^2\left(0,T; \H\right)\,,\quad\Pi\text{-a.s.}\,,\\
  \label{c2}
  (x_{\eps_k}, y_{\eps_k})\weakstar (x,y) \quad\text{in } L^\infty\left(0,T; L^2(\Omega; \H)\right)\,,\\
  \label{c3}
  (x_{\eps_k}, y_{\eps_k})\rarrw (x,y) \quad\text{in } L^2\left(\Omega\times(0,T); \V_0\right)\,,\\
  \label{c4}
  \xi_{\eps_k}\rarrw\xi \quad\text{in } L^1\left(\Omega\times(0,T)\times D\right)\,, \qquad
  \xi_{\Gamma,\eps_k}\rarrw\xi_{\Gamma} \quad\text{in } L^1\left(\Omega\times(0,T)\times\Gamma\right)\,,\\
  \label{c5}
  \eps_k y_{\eps_k} \rarr0 \quad\text{in } L^2\left(\Omega\times(0,T); H^1(\Gamma)\right)\,.
  \end{gather}
\end{thm}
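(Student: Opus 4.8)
The plan is to bound the family $\{(x_\eps,y_\eps,\xi_\eps,\xi_{\Gamma,\eps})\}_{\eps>0}$ uniformly in $\eps$, extract limits by compactness, and identify the limit through the uniqueness provided by Theorem~\ref{teo1}. First I would apply the It\^o formula to $\tfrac12\l|(x_\eps,y_\eps)\r|_\H^2$ in \eqref{prob} --- rigorously on the regularized level of Section~3 --- using $\left<\C_\eps(x_\eps,y_\eps),(x_\eps,y_\eps)\right>_{\V_\eps}=\l|\nabla x_\eps\r|_H^2+\eps\l|\nabla_\Gamma y_\eps\r|_{H_\Gamma}^2$, the Lipschitz continuity of $\P$ and $\B$, the Fenchel--Young identities $\xi_\eps x_\eps=j(x_\eps)+j^*(\xi_\eps)\geq0$ and $\xi_{\Gamma,\eps}y_\eps=j_\Gamma(y_\eps)+j_\Gamma^*(\xi_{\Gamma,\eps})\geq0$, the Burkholder--Davis--Gundy inequality and Gronwall's lemma. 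This yields, with constants independent of $\eps$, uniform bounds for $\E\sup_{t\in[0,T]}\l|(x_\eps,y_\eps)(t)\r|_\H^2$, for $\E\int_0^T(\l|\nabla x_\eps\r|_H^2+\eps\l|\nabla_\Gamma y_\eps\r|_{H_\Gamma}^2)\,dt$, and for $\E\int_0^T\!\!\int_D(j(x_\eps)+j^*(\xi_\eps))+\E\int_0^T\!\!\int_\Gamma(j_\Gamma(y_\eps)+j_\Gamma^*(\xi_{\Gamma,\eps}))$. Since $y_\eps=\tau x_\eps$ and $\tau:V\to H^{1/2}(\Gamma)$ is continuous, the second bound controls $(x_\eps,y_\eps)$ uniformly in $L^2(\Omega\times(0,T);\V_0)$, while the superlinearity of $j^*$ and $j_\Gamma^*$ turns the third into uniform integrability of $\{\xi_\eps\}$ and $\{\xi_{\Gamma,\eps}\}$ in $L^1$.

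These estimates yield the weak-type convergences directly. Along a subsequence (not relabelled), weak-$*$ compactness in $L^\infty(0,T;L^2(\Omega;\H))$ gives \eqref{c2}, reflexivity of $L^2(\Omega\times(0,T);\V_0)$ gives \eqref{c3}, and the Dunford--Pettis theorem (via the uniform integrability above) gives the weak-$L^1$ limits \eqref{c4}. The vanishing of the viscosity \eqref{c5} is elementary: since $\E\int_0^T\eps_k\l|\nabla_\Gamma y_{\eps_k}\r|_{H_\Gamma}^2\,dt\leq C$, one has $\E\int_0^T\eps_k^2\l|\nabla_\Gamma y_{\eps_k}\r|_{H_\Gamma}^2\,dt\leq C\eps_k\to0$, and likewise $\E\int_0^T\eps_k^2\l|y_{\eps_k}\r|_{H_\Gamma}^2\,dt\to0$, so $\eps_k y_{\eps_k}\to0$ in $L^2(\Omega\times(0,T);H^1(\Gamma))$.

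The core difficulty is the strong convergence \eqref{c1}. A direct energy/difference estimate is \emph{not} available: since $y=\tau x\in H^{1/2}(\Gamma)\setminus H^1(\Gamma)$, the increment $(x_\eps-x,y_\eps-y)$ is not an admissible argument for $\C_\eps$, and the difference of two viscous solutions leaves an indefinite boundary term $\tfrac{\eps_k-\eps_m}{2}(\l|\nabla_\Gamma y_{\eps_k}\r|_{H_\Gamma}^2-\l|\nabla_\Gamma y_{\eps_m}\r|_{H_\Gamma}^2)$ that the uniform bounds do not force to zero, so monotonicity alone does not make the family Cauchy. Instead I would combine the compact inclusion $\V_0\cembed\H$ with the uniform bound in $L^2(\Omega\times(0,T);\V_0)$: by the interpolation inequality $\l|\cdot\r|_\H\leq\delta\l|\cdot\r|_{\V_0}+C_\delta\l|\cdot\r|_{\V_0^*}$ it suffices to prove strong convergence in the weaker space $L^2(\Omega\times(0,T);\V_0^*)$. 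For the latter I would invoke a stochastic Aubin--Lions/Simon compactness: the martingale $\int_0^\cdot\B\,d\W$ has uniform fractional time-regularity in $\H$ via the It\^o isometry and Kolmogorov's criterion, while the drift $\int_0^\cdot\A_\eps\,ds+\int_0^\cdot(\xi_\eps,\xi_{\Gamma,\eps})\,ds$ is of bounded variation in time in a sufficiently weak space once the $L^1$ selections are regularized through the ultracontractive resolvent of the Appendix and the viscous remainder is absorbed via \eqref{c5}. I expect this step --- the uniform-in-$\eps$ control of the low-regularity ($L^1$) drift in $\V_0^*$ --- to be the principal obstacle.

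With \eqref{c1} in hand, the passage to the limit is routine. The strong convergence makes $\P(x_\eps,y_\eps)\to\P(x,y)$ and $\B(\cdot,x_\eps,y_\eps)\to\B(\cdot,x,y)$ in the appropriate topologies, the viscous term disappears by \eqref{c5}, and the inclusions $\xi\in\beta(x)$, $\xi_\Gamma\in\beta_\Gamma(y)$ follow from the weak-$L^1$ convergence \eqref{c4} together with the $\limsup$/semicontinuity argument on $\int\xi_\eps x_\eps$ and $\int\xi_{\Gamma,\eps}y_\eps$ and the maximal monotonicity of $\gamma=\partial k$, exactly as in the proof of Theorem~\ref{teo1}. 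Hence the limit $(x,y,\xi,\xi_\Gamma)$ is a strong solution of \eqref{prob} with $\eps=0$; by the uniqueness granted by Theorem~\ref{teo1} it is independent of the extracted subsequence, so \eqref{c2}--\eqref{c5} hold for the whole sequence $\{\eps_k\}$, and the pathwise nature of the construction on a set $\Omega'$ of full probability promotes \eqref{c1} to $\Pi$-almost sure convergence.
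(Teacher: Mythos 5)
Your uniform estimates, the weak convergences \eqref{c2}--\eqref{c4}, and the elementary argument for \eqref{c5} are all sound and coincide with the paper's. The genuine gap is exactly where you locate it: the strong convergence \eqref{c1}. Two things go wrong with your proposed route. First, the drift $\int_0^\cdot(\xi_{\eps},\xi_{\Gamma,\eps})\,ds$ is only $L^1$ in the space variables, and $L^1(D)$ does \emph{not} embed into $\V_0^*$ (since $H^1(D)\not\embed L^\infty(D)$ for $N\geq2$), so a compactness scheme pivoting on $L^2(\Omega\times(0,T);\V_0^*)$ cannot even accommodate the equation; one is forced into a much weaker space $\Z^*$ with $\Z\embed\V_\eps\cap L^\infty(D)\times L^\infty(\Gamma)$. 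Second, and more fundamentally, any compactness argument run in expectation spaces (stochastic Aubin--Lions, Kolmogorov criterion for the martingale part, etc.) can at best produce strong convergence in $L^2(\Omega\times(0,T);\H)$, hence convergence in probability and a.s. convergence only along further subsequences; it cannot deliver what \eqref{c1} asserts, namely $\Pi$-a.s. convergence of the \emph{full} sequence $\{\eps_k\}$ in $L^2(0,T;\H)$. Your closing claim that ``the pathwise nature of the construction promotes \eqref{c1} to a.s. convergence'' has no support in your framework, because nothing in your construction is pathwise.

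The paper's proof resolves both issues with one device that your proposal misses: it first reduces to smoothed noise. Using the resolvent regularization $\B^\delta=(I+\delta\C_\eps)^{-m}\B$ of Corollary~\ref{cor_Ceps} and the continuous dependence estimate \eqref{continuous} (whose constant is independent of $\eps$), it suffices to prove the theorem under the stronger hypothesis \eqref{B_strong}. Then the stochastic convolution $\B\cdot\W$ has paths in $L^2(0,T;\V_\eps)\cap L^\infty(0,T;L^\infty(D)\times L^\infty(\Gamma))$, and one can argue $\omega$-by-$\omega$: pathwise estimates uniform in $\eps$ as in Lemma~\ref{pathwise}, a bound on $\frac{d}{dt}\bigl((x_\eps,y_\eps)-\B\cdot\W\bigr)$ in $L^1(0,T;\Z^*)$, and the \emph{deterministic} Simon compactness theorem for each fixed $\omega\in\Omega'$. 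Identification of the limit equation and the uniqueness of the solution for $\eps=0$ then show that, for each fixed $\omega$, every limit point of the sequence equals $(x,y)(\omega)$; hence the whole sequence converges pathwise, which is precisely \eqref{c1}, and simultaneously the extracted subsequence is independent of $\omega$, so that Vitali's theorem upgrades the pathwise convergences to \eqref{c2}--\eqref{c4}. In short: the missing idea is the reduction to regular noise followed by a pathwise (deterministic) compactness-plus-uniqueness argument, not a stochastic compactness argument in expectation.
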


%%%%%%%%%%%%%%%%%%%%%%%%%%%%%%%%%%%%%%%%%%%%%%%%%%

\section{Well-posedness}
\setcounter{equation}{0}
\label{proof-well}

First of all, we prove well-posedness for the problem \eqref{prob} with additive and more regular noise.
Namely, let $\Z$ be a separable Hilbert space such that $\Z\embed \V_\eps\cap L^\infty(D)\times L^\infty(\Gamma)$
(such a $\Z$ exists thanks to the Sobolev embeddings theorems) and consider the problem
\beq
  \label{additive}
  d(x_t,y_t) + \A_\eps(x_t, y_t)\,dt + \gamma(x_t, y_t)\,dt \ni \B(t)\,d\W_t\,, \quad (x(0), y(0))=(x_0, y_0)\,,
\eeq
where
\beq
\label{B_strong}
\B\in L^2\left(\Omega; L^2(0,T; \cL_2(\U,\Z))\right)\,.
\eeq
The hypothesis on $\B$ will be removed at the end of the section.
Existence of a solution is proved using a suitable approximation on the equation and then
passing to the limit using compactness results and monotonicity arguments.
Continuous dependence on the data is obtained using an appropriate version of 
It\^o's formula.

Throughout the section, $\eps\geq0$ is a fixed constant, so that the argument fits both to the case $\eps=0$ and $\eps>0$
at the same time;
when two different approaches are needed, we will specify it explicitly.

\subsection{The approximated problem}
For any $\lambda\in(0,1)$, let $\beta_\lambda$, $\beta_{\Gamma, \lambda}$, $j_\lambda$ and $j_{\Gamma, \lambda}$
denote the Yosida approximations of the graphs $\beta$ and $\beta_\Gamma$
and the Moreau regularizations of the functions $j$ and $j_{\Gamma}$, respectively.
With this notation, it is a standard matter to check that the
Yosida approximation of $\gamma$ and the Moreau regularization of $k$ are given by
 \[\gamma_\lambda=(\beta_\lambda, \beta_{\Gamma, \lambda})\,, \qquad
 k_\lambda(u,v)=j_\lambda(u)+j_{\Gamma, \lambda}(v) \quad\forall\,(u,v)\in\Ar^2\,.
\]

We consider the following approximated problem:
\begin{gather}
\label{app}
  d(x_\lambda,y_\lambda) + \A_\eps(x_\lambda, y_\lambda)\,dt +
  \gamma_\lambda(x_\lambda, y_\lambda)\,dt = \B(t)\,d\W_t\,,\\
  \label{app_init}
  (x_\lambda(0), y_\lambda(0))=(x_0, y_0)\,.
\end{gather}
For sake of simplicity, let us use the notation
\[
  \A_{\eps,\lambda}:\V_\eps\rarr\V_\eps^*\,, \qquad
  \A_{\eps,\lambda}(u,v):=\A_\eps(u,v)+\gamma_\lambda(u,v) \quad\forall\,(u,v)\in\V_\eps\,,
\]
so that we can write the approximated problem as
\[
  d(x_\lambda,y_\lambda) + \A_{\eps,\lambda}(x_\lambda,y_\lambda)\,dt = \B(t)\,d\W_t\,, \qquad
  (x_\lambda(0),y_\lambda(0))=(x_0, y_0)\,.
\]
We recall some properties of the operator $\A_{\eps,\lambda}$ in the following lemma.

\begin{lem}
  \label{properties}
  The operator $\A_{\eps,\lambda}:\V_\eps\rarr\V_\eps^*$ is hemicontinuous,
  weakly monotone, weakly coercive and bounded. More specifically, there exist two positive constants 
  $C$ and $C_{\eps,\lambda}$,
  with the first being independent of $\eps$ and $\lambda$,
  such that the following conditions hold for any $(u_1,v_1), (u_2, v_2), (u,v) \in \V_\eps$:
  \begin{gather*}
    \left<\A_{\eps,\lambda}(u_1,v_1)-\A_{\eps,\lambda}(u_2,v_2), (u_1,v_1)-(u_2,v_2)\right>_{\V_\eps}
    \geq-C\l|(u_1,v_1)-(u_2,v_2)\r|^2_\H\,,\\
    \left<\A_{\eps,\lambda}(u,v), (u,v)\right>_{\V_\eps}\geq\l|\nabla u\r|^2_H+\eps\l|\nabla_\Gamma v\r|_{H_\Gamma}^2
    -C\l|(u,v)\r|_\H^2-C\,,\\
    \l|\A_{\eps,\lambda}(u,v)\r|_{\V_\eps^*} \leq C_{\eps,\lambda}\left(1+\l|(u,v)\r|_{\V_\eps}\right)\,.
  \end{gather*}
\end{lem}
\begin{proof}
  Firstly, let $(r_1, s_1), (r_2, s_2), (r_3,s_3)\in\V_\eps$: for any $t\in\Ar$ we have
  \[
  \begin{split}
  &\left<\A_{\eps,\lambda}((r_1,s_1)+t(r_2,s_2)), (r_3,s_3)\right>_{\V_\eps}\\
  &\qquad=\int_D\nabla(r_1+tr_2)\cdot\nabla r_3 + \int_D\pi(r_1+tr_2)r_3
  + \int_D\beta_\lambda(r_1+tr_2)r_3\\
  &\qquad+\eps\int_\Gamma\nabla_\Gamma(s_1+ts_2)\cdot\nabla_\Gamma s_3
  +\int_\Gamma\pi_\Gamma(s_1+ts_2)s_3 + \int_\Gamma\beta_{\Gamma, \lambda}(s_1+ts_2)s_3\,,
  \end{split}
  \] 
  and by the Lipschitz continuity of $\pi$, $\pi_\Gamma$, $\beta_\lambda$ and $\beta_{\Gamma, \lambda}$,
  the right-hand side is a continuous function of $t$. Hence, $\A_{\eps,\lambda}$ is hemicontinuous.
  Secondly, using the Lipschitz continuity of $\pi$ and $\pi_\Gamma$ and the monotonicity of
  $\beta_\lambda$ and $\beta_{\Gamma, \lambda}$, we have
  \[
  \begin{split}
  &\left<\A_{\eps,\lambda}(u_1,v_1)-\A_{\eps,\lambda}(u_2,v_2), (u_1,v_1)-(u_2,v_2)\right>_{\V_\eps}
  =\int_D|\nabla(u_1-u_2)|^2 \\
  &+ \int_D\left(\pi(u_1)-\pi(u_2)\right)(u_1-u_2) +
  \int_D\left(\beta_\lambda(u_1)-\beta_\lambda(u_2)\right)(u_1-u_2)
  +\eps\int_\Gamma|\nabla_\Gamma(v_1-v_2)|^2\\ 
  &+ \int_\Gamma\left(\pi_\Gamma(v_1)-\pi_\Gamma(v_2)\right)(v_1-v_2) + 
  \int_\Gamma\left(\beta_{\Gamma,\lambda}(v_1)-\beta_{\Gamma, \lambda}(v_2)\right)(v_1-v_2)\\
  &\geq-C_\pi\l|u_1-u_2\r|_H^2
  -C_{\pi_\Gamma}\l|v_1-v_2\r|_{H_\Gamma}^2
  \geq -C_\P\l|(u_1,v_1)-(u_2,v_2)\r|^2_\H\,,
  \end{split}
  \]
  from which the weak monotonicity. Moreover, using the Lipschitz continuity of $\pi$ and $\pi_\Gamma$,
  a similar computation leads to 
  \[
  \begin{split}
  &\left<\A_\eps(u,v)+\gamma_\lambda(u,v), (u,v)\right>_{\V_\eps}\\
  &\qquad\qquad\geq \int_D|\nabla u|^2 + \int_D\pi(u)u + \int_D\beta_\lambda(u)u
  +\eps\int_\Gamma|\nabla_\Gamma v|^2 + \int_\Gamma \pi_\Gamma(v)v + \int_\Gamma\beta_{\Gamma, \lambda}(v)v\\
  &\qquad\qquad\geq \l|\nabla u\r|_H^2 + \eps\l|\nabla_\Gamma v\r|_{H_\Gamma}^2 - C\left(1+\l|(u,v)\r|_\H^2\right)
  \end{split}
  \]
  for a positive constant $C$,
  from which we deduce the weak coercivity. Indeed, this is immediate if $\eps>0$; if $\eps=0$, this
  follows from the fact that the norm $\l|\cdot\r|_{\V_0}$ is equivalent to $|\cdot|_{\V_0}$
  (since $\tau:V\rarr V_{\Gamma}^0$ is continuous).
  Finally, for any $(\varphi,\psi)\in \V_\eps$, by the Lipschitz continuity of 
  $\pi$, $\pi_\Gamma$, $\beta_\lambda$ and $\beta_{\Gamma,\lambda}$,
  using the H\"older inequality and renominating the positive constant $C$ at each passage we have
  \[
  \begin{split}
  &\left<\A_{\eps,\lambda}(u,v), (\varphi,\psi)\right>_{\V_\eps}=
  \int_D\nabla u\cdot\nabla\varphi + \int_D\pi(u)\varphi
  +\int_D\beta_\lambda(u)\varphi\\
  &\qquad\qquad\qquad\qquad\quad+\eps\int_\Gamma\nabla_\Gamma v\cdot \nabla_\Gamma\psi +
   \int_\Gamma\pi_\Gamma(v)\psi 
  + \int_\Gamma\beta_{\Gamma,\lambda}(v)\psi\\
  &\leq\l|\nabla u\r|_H\l|\nabla\varphi\r|_H + C(1+\l|u\r|_H)\l|\varphi\r|_H
  + \frac1\lambda\l|u\r|_H\l|\varphi\r|_H\\
  &\qquad\qquad\qquad\qquad+\eps\l|\nabla_\Gamma v\r|_{H_\Gamma}\l|\nabla_\Gamma\psi\r|_{H_\Gamma}
  +C(1+\l|v\r|_{H_\Gamma})\l|\psi\r|_{H_\Gamma} + \frac1\lambda\l|v\r|_{H_\Gamma}\l|\psi\r|_{H_\Gamma}\\
  &\leq C\left(\max\{1,\eps\}+1+\frac1\lambda\right)\left((1+\l|u\r|_V)\l|\varphi\r|_V 
  + (1+\l|v\r|_{V_\Gamma^\eps})\l|\psi\r|_{V_\Gamma^\eps}\right)\\
  &\leq C_{\eps,\lambda}\left(1+\l|(u,v)\r|_{\V_\eps}\right)\l|(\varphi,\psi)\r|_{\V_\eps}\,,
  \end{split}
  \]
  from which the boundedness follows. 
\end{proof}

The previous lemma ensures that the approximated problem is well-posed according to the classical variational approach 
by Pardoux, Krylov and Rozovski{\u\i} (see \cite{KR-spde, Pard, prevot-rock}) in the Gelfand triple $(\V_\eps, \H, \V_\eps^*)$.
Hence, for any $\lambda\in(0,1)$ there is a unique strong solution
\[
  (x_\lambda, y_\lambda) \in L^2\left(\Omega; C^0([0,T]; \H)\right)\cap L^2\left(\Omega\times(0,T); \V_\eps\right)
\]
to the approximated problem \eqref{app}--\eqref{app_init}. Moreover we can exhibit some a priori estimates, 
as it is shown in the following lemmata.

\begin{lem}\label{ito}
There exists a constant $K$ such that the following inequality holds
\begin{equation*}
\begin{split}
&\l|(x_\lambda,y_\lambda)\r|_{L^2\left(\Omega; C^0([0,T]; \H)\right)}^2 + 
\l|(x_\lambda,y_\lambda)\r|_{L^2\left(\Omega; L^2(0,T; \V_\eps)\right)}^2 + 
\l|\beta_\lambda(x_\lambda) x_\lambda\r|_{L^1\left(\Omega; L^1(0,T; L^1(D))\right)}\\
&+ \l|\beta_{\Gamma,\lambda}(y_\lambda) y_\lambda\r|_{L^1\left(\Omega; L^1(0,T; L^1(\Gamma))\right)} \leq 
K \left(1+ \l|(x_0,y_0)\r|^2_{L^2\left(\Omega;\H\right)}   + \l| \B \r|_{L^2\left(\Omega; L^2(0,T; \cL_2(\U,\H)\right))}^2 \right)\,.
\end{split}
\end{equation*}
\end{lem}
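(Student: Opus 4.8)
The plan is to apply the variational It\^o formula of Krylov, Rozovski{\u\i} and Pardoux (available thanks to the hemicontinuity, weak monotonicity, weak coercivity and boundedness established in Lemma~\ref{properties}) to the functional $t\mapsto\frac12\l|(x_\lambda(t),y_\lambda(t))\r|_\H^2$ along the solution of \eqref{app}--\eqref{app_init}. This yields, for every $t\in[0,T]$ and $\Pi$-a.s.,
\[
\frac12\l|(x_\lambda(t),y_\lambda(t))\r|_\H^2
+\int_0^t\left<\A_{\eps,\lambda}(x_\lambda,y_\lambda),(x_\lambda,y_\lambda)\right>_{\V_\eps}ds
=\frac12\l|(x_0,y_0)\r|_\H^2
+\frac12\int_0^t\l|\B(s)\r|_{\cL_2(\U,\H)}^2\,ds
+I_\lambda(t)\,,
\]
where $I_\lambda(t):=\int_0^t\left((x_\lambda,y_\lambda),\B(s)\,d\W_s\right)_\H$ is a local martingale.

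The key structural observation is that, in contrast with the coercivity bound proved in Lemma~\ref{properties}, here I do \emph{not} discard the monotone contributions. Retracing the computation in that proof and controlling the $\pi$-terms by the Lipschitz continuity of $\pi,\pi_\Gamma$ together with Young's inequality, I obtain
\[
\left<\A_{\eps,\lambda}(u,v),(u,v)\right>_{\V_\eps}
\geq\l|\nabla u\r|_H^2+\eps\l|\nabla_\Gamma v\r|_{H_\Gamma}^2
+\int_D\beta_\lambda(u)\,u+\int_\Gamma\beta_{\Gamma,\lambda}(v)\,v
-C\l|(u,v)\r|_\H^2-C\,,
\]
with $C$ independent of $\eps$ and $\lambda$. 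Since $0\in\beta(0)\cap\beta_\Gamma(0)$, the Yosida approximations are monotone and vanish at $0$, so $\beta_\lambda(u)u\geq0$ and $\beta_{\Gamma,\lambda}(v)v\geq0$ pointwise; these two integrals are therefore nonnegative and belong on the left-hand side, furnishing exactly the last two terms of the claimed estimate (and making their $L^1$-norms coincide with the integrals themselves). The equivalence of $\l|\cdot\r|_{\V_\eps}$ with $|\cdot|_{\V_\eps}$ then lets me reconstruct the full $\V_\eps$-norm from $\l|\nabla u\r|_H^2+\eps\l|\nabla_\Gamma v\r|_{H_\Gamma}^2$ together with the (already controlled) $\H$-norm.

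I would then close the estimate in two steps. First, taking expectations in the It\^o identity kills the martingale term $I_\lambda$, and after inserting the coercivity lower bound I apply Gronwall's lemma to $t\mapsto\E\l|(x_\lambda(t),y_\lambda(t))\r|_\H^2$; this bounds the $L^2\left(\Omega;L^2(0,T;\V_\eps)\right)$-norm and the two $L^1$ monotone terms in terms of the data, with a $\lambda$-independent constant since only the $\lambda$-independent $C$ above enters. Second, to recover the $L^2\left(\Omega;C^0([0,T];\H)\right)$-norm I return to the pathwise identity, take the supremum in $t$ and then expectation, and estimate $\E\sup_{t\le T}|I_\lambda(t)|$ by the Burkholder--Davis--Gundy inequality: this produces a term bounded by $\frac14\E\sup_{t\le T}\l|(x_\lambda,y_\lambda)\r|_\H^2+C\,\E\int_0^T\l|\B\r|_{\cL_2(\U,\H)}^2\,ds$, whose first summand is absorbed into the left-hand side while the remaining integral term is already controlled by the first step.

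The main obstacle is the bookkeeping in this second step: one must carry out the Burkholder--Davis--Gundy and Young absorption cleanly so that the resulting constant stays independent of $\lambda$, and one must verify that the sign-indefinite products $\pi(u)u$ and $\pi_\Gamma(v)v$ are dominated by $C\left(1+\l|(u,v)\r|_\H^2\right)$ rather than spoiling the nonnegativity of the monotone integrals. Everything else is routine once the refined coercivity is in place.
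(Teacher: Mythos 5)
Your proposal is correct and follows essentially the same route as the paper: the It\^o formula of Krylov, Rozovski{\u\i} and Pardoux for $\l|\cdot\r|_\H^2$, the coercivity bound that retains the nonnegative terms $\int_D\beta_\lambda(x_\lambda)x_\lambda$ and $\int_\Gamma\beta_{\Gamma,\lambda}(y_\lambda)y_\lambda$, Gronwall's lemma, and the Burkholder--Davis--Gundy plus Young absorption of $\E\sup_{t\in[0,T]}|I_\lambda(t)|$. The only, immaterial, difference is organizational: the paper takes supremum and expectation in a single pass and keeps the stochastic integral until the final BDG step, whereas you first take plain expectations (legitimate here, since the variational regularity of $(x_\lambda,y_\lambda)$ and the integrability of $\B$ make $I_\lambda$ a true martingale) and then run the supremum/BDG absorption separately to recover the $L^2\left(\Omega;C^0([0,T];\H)\right)$ bound.
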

\begin{proof}
The proof relies on the application of the version of It\^o formula introduced in \cite{KR-spde}. Precisely,
we have for every $t\in[0,T]$ and $\Pi$-almost surely that
\begin{equation*}
\begin{split}
&\l|(x_\lambda(t),y_\lambda(t))\r|^2_\H + 
2 \int_0^t \left<  \A_{\eps,\lambda} (x_\lambda,y_\lambda), (x_\lambda,y_\lambda) \right>_{\V_\eps}\,ds\\
&\qquad= \l|(x_0,y_0)\r|^2_{L^2\left(\Omega;\H\right)} + 2\int_0^t (x_\lambda(s),y_\lambda(s)) \B(s)\,d\W_s + 
\int_0^t \l| \B(s) \r|^2_{\cL_2(\U,\H)}\,ds\,.
\end{split}
\end{equation*} 
Using the Lipschitzianity of $\pi$ and $\pi_\Gamma$ and the
weak coercivity of $\A_{\eps,\lambda}$, we deduce that
\begin{equation*}
\begin{split}
&\l|(x_\lambda(t),y_\lambda(t))\r|^2_\H + 2C \int_0^t \l|(x_\lambda(s),y_\lambda(s))\r|_{\V_\eps}^2\,ds  
- 2C \int_0^t \left( 1+ \l|(x_\lambda(s),y_\lambda(s))\r|^2_{\H}\right)\,ds\\
&\quad+2\int_0^t\int_D\beta_\lambda(x_\lambda(s))x_\lambda(s)\,ds 
+ 2\int_0^t\int_\Gamma\beta_{\Gamma,\lambda}(y_\lambda(s))y_\lambda(s)\,ds\\
&\quad \lesssim \l|(x_0,y_0)\r|^2_{L^2\left(\Omega;\H\right)} + 
2\int_0^t (x_\lambda(s),y_\lambda(s)) \B(s)\,d\W_s + \int_0^t \l| \B(s) \r|^2_{\cL_2(\U,\H)}\,ds
\end{split}
\end{equation*} 
for a positive constant $C$ independent of $\lambda$.
Taking the supremum in time and expectation, thanks to the Gronwall lemma we get
\begin{equation*}
\begin{split}
&\E\l|(x_\lambda(t),y_\lambda(t))\r|^2_{C([0,T];\H)} + \E\l|(x_\lambda,y_\lambda)\r|^2_{L^2(0,T; \V_\eps)}\\
&\quad+ \E \int_0^T\int_D \beta_\lambda(x_\lambda(s)) x_\lambda(s)\,ds
+\E \int_0^T \int_\Gamma\beta_{\Gamma, \lambda}(y_\lambda(s)) y_\lambda(s)\,ds \\
&\quad  \lesssim 1 + \E\l|(x_0,y_0)\r|^2_{L^2\left(\Omega;\H\right)} +
\E\sup_{t \in[0,T]} \left| \int_0^t (x_\lambda,y_\lambda) \B(s) d\W_s \right| + \E \l| \B(s) \r|^2_{L^2(0,T; \cL_2(\U,\H))}\,.
\end{split}
\end{equation*} 
A direct consequence of the Burkholder-Davis-Gundy and Young inequalities
(the reader can refer to \cite[Lem.~3.1]{mar-scar}) is that
for every $\delta>0$ we have
\begin{equation*}
\E\sup_{t \in[0,T]} \left| \int_0^t (x_\lambda,y_\lambda) \B(s)\,d\W_s \right| \leq 
\delta \E \l| (x_\lambda,y_\lambda) \r|^2_{C([0,T];\H)} + K(\delta) \E \int_0^T\l| \B(s) \r|^2_{\cL_2(\U,\H)}\,ds\,.
\end{equation*}
From the arbitrariness of $\delta$, we conclude choosing $\delta$ small enough.
\end{proof}

\begin{lem}\label{pathwise}
There exists $\Omega'\in\f$ with $\Pi(\Omega')=1$ such that, for every $\omega\in\Omega'$,
  there exists a positive constant \carlo{$K = K(\omega)$ satisfying
  \[\begin{split}
  \l|(x_\lambda,y_\lambda)(\omega)\r|^2_{L^\infty(0,T; \H)\cap L^2(0,T; \V_\eps)}
  &+\l|\beta_\lambda(x_\lambda(\omega))x_\lambda(\omega)\r|_{L^1((0,T)\times D)}\\
  &+\l|\beta_{\Gamma,\lambda}(y_\lambda(\omega))y_\lambda(\omega)\r|_{L^1((0,T)\times\Gamma)}
  \leq K \qquad\forall\,\lambda\in(0,1)\,.
  \end{split}\]}
\end{lem}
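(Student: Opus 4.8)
The plan is to remove the stochastic integral by an additive-noise substitution and then run a purely \emph{deterministic} energy estimate on a single event of full probability, exploiting the $L^\infty$-regularity of the regularized noise to tame the singular drift. First I would introduce the stochastic convolution $z:=\int_0^\cdot\B(s)\,d\W_s$: since $\B\in L^2(\Omega;L^2(0,T;\cL_2(\U,\Z)))$ by \eqref{B_strong}, the process $z$ belongs to $L^2(\Omega;C^0([0,T];\Z))$, so there is $\Omega'\in\f$ with $\Pi(\Omega')=1$ on which $t\mapsto z(t)$ is continuous with values in $\Z\embed\V_\eps\cap(L^\infty(D)\times L^\infty(\Gamma))$; in particular $\sup_{t\in[0,T]}\l|z(t)\r|_\Z<\infty$ for $\omega\in\Omega'$. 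Writing $(\bar x_\lambda,\bar y_\lambda):=(x_\lambda,y_\lambda)-z$, for $\omega\in\Omega'$ this pair solves, in $\V_\eps^*$ and for a.e.\ $t$, the pathwise deterministic equation $\tfrac{d}{dt}(\bar x_\lambda,\bar y_\lambda)+\A_\eps(x_\lambda,y_\lambda)+\gamma_\lambda(x_\lambda,y_\lambda)=0$ with $(\bar x_\lambda,\bar y_\lambda)(0)=(x_0,y_0)$; here $\bar{}\in L^2(0,T;\V_\eps)$ and its time derivative lies in $L^2(0,T;\V_\eps^*)$ by the boundedness in Lemma~\ref{properties}, so the deterministic chain rule in $(\V_\eps,\H,\V_\eps^*)$ applies. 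Crucially, the constant produced below will depend on $\omega$ only through $\sup_t\l|z(t)\r|_\Z$ and $\l|(x_0,y_0)\r|_\H$, hence not on $\lambda$.

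Next I would test the equation against $(\bar x_\lambda,\bar y_\lambda)=(x_\lambda,y_\lambda)-(z_1,z_2)$. Splitting $\A_\eps=\C_\eps+\P$, the coercive part contributes $\l|\nabla x_\lambda\r|_H^2+\eps\l|\nabla_\Gamma y_\lambda\r|_{H_\Gamma}^2$ together with the monotone terms $\int_D\beta_\lambda(x_\lambda)x_\lambda+\int_\Gamma\beta_{\Gamma,\lambda}(y_\lambda)y_\lambda$, which I keep on the good side. The pairings carrying $\nabla z_1,\nabla_\Gamma z_2$ are absorbed by Young's inequality at the price of $\l|z\r|_{\V_\eps}^2$ (finite on $\Omega'$ since $\Z\embed\V_\eps$), while the contributions of $\P$ and of $\pi(x_\lambda)z_1,\pi_\Gamma(y_\lambda)z_2$ are controlled by the Lipschitz bounds, producing only terms of the form $C(\omega)(1+\l|(x_\lambda,y_\lambda)\r|_\H^2)$.

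The hard part is the singular pairing $\int_D\beta_\lambda(x_\lambda)z_1+\int_\Gamma\beta_{\Gamma,\lambda}(y_\lambda)z_2$, for which no growth information on $\beta,\beta_\Gamma$ is available. Here I would use that $z_1,z_2\in L^\infty$ on $\Omega'$ together with the elementary inequality: for every $\delta>0$ there is $C_\delta>0$ with
\[
  |\beta_\lambda(r)|\leq\delta\,\beta_\lambda(r)\,r+C_\delta \qquad\forall\,r\in\Ar,\ \forall\,\lambda\in(0,1)\,,
\]
and the analogue for $\beta_{\Gamma,\lambda}$. This follows from the superlinearity of $j^*$, which gives $|s|\leq\delta\,j^*(s)+C_\delta$, combined with the Fenchel identity $j_\lambda(r)+j_\lambda^*(\beta_\lambda(r))=r\,\beta_\lambda(r)$ together with $j^*\leq j_\lambda^*$ and $j_\lambda\geq0$, whence $j^*(\beta_\lambda(r))\leq r\,\beta_\lambda(r)$. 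Choosing $\delta$ so small that $\delta\l|z_1\r|_{L^\infty}\leq\tfrac12$ (allowed since $\l|z_1\r|_{L^\infty}<\infty$ for fixed $\omega$), the resulting $\tfrac12\int_D\beta_\lambda(x_\lambda)x_\lambda$ is reabsorbed into the left-hand side and only an $\omega$-dependent additive constant survives; the boundary term is handled identically.

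Collecting everything yields, for $\omega\in\Omega'$, a differential inequality of the form $\tfrac{d}{dt}\l|(\bar x_\lambda,\bar y_\lambda)\r|_\H^2+\l|\nabla x_\lambda\r|_H^2+\eps\l|\nabla_\Gamma y_\lambda\r|_{H_\Gamma}^2+\int_D\beta_\lambda(x_\lambda)x_\lambda+\int_\Gamma\beta_{\Gamma,\lambda}(y_\lambda)y_\lambda\leq C(\omega)(1+\l|(\bar x_\lambda,\bar y_\lambda)\r|_\H^2)$, where I used $\l|(x_\lambda,y_\lambda)\r|_\H^2\lesssim\l|(\bar x_\lambda,\bar y_\lambda)\r|_\H^2+\l|z\r|_\H^2$. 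The deterministic Gronwall lemma bounds $\l|(\bar x_\lambda,\bar y_\lambda)\r|_{L^\infty(0,T;\H)}$, and integrating in time gives the $L^2(0,T;\V_\eps)$ bound together with the two $L^1$ bounds on $\beta_\lambda(x_\lambda)x_\lambda$ and $\beta_{\Gamma,\lambda}(y_\lambda)y_\lambda$, all uniform in $\lambda\in(0,1)$ with constant $K=K(\omega)$. Returning to $(x_\lambda,y_\lambda)=(\bar x_\lambda,\bar y_\lambda)+z$ and using, for the fixed value of $\eps$, the equivalence of $\l|\cdot\r|_{\V_\eps}$ with $|\cdot|_{\V_\eps}$ (via continuity of $\tau$), one recovers the claimed estimate. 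The main obstacle is precisely the singular pairing with $z$: without any growth bound on $\beta$, it is only the combination of the superlinearity of $j^*$ with the $L^\infty$-regularity of the regularized noise ($\Z\embed L^\infty$) that permits absorption with a finite, $\lambda$-independent constant. A minor additional point is that a single $\Omega'$ serves all $\lambda$ simultaneously: this holds because the bound depends on $\omega$ only through $z(\omega)$ and the datum, and if desired one first argues for $\lambda$ in a countable dense subset of $(0,1)$ and passes to the remaining values by continuous dependence on $\lambda$.
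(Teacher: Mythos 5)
Your proposal is correct and is essentially the paper's own argument: subtract the stochastic convolution $\B\cdot\W$ (which lies pathwise in $L^2(0,T;\V_\eps)\cap L^\infty(0,T;L^\infty(D)\times L^\infty(\Gamma))$ by \eqref{B_strong} and the choice of $\Z$), test the resulting deterministic equation by $(x_\lambda,y_\lambda)-\B\cdot\W$ on a full-measure set $\Omega'$, absorb the singular pairing using convex duality together with the pathwise $L^\infty$ bound on the noise, and close with Gronwall. The only difference is cosmetic: where the paper invokes the Fenchel--Young inequality $\beta_\lambda(x_\lambda)\,(B\cdot W)\le\tfrac12 j^*(\beta_\lambda(x_\lambda))+\tfrac12 j(2\,B\cdot W)$ together with $j^*(\beta_\lambda(r))\le r\beta_\lambda(r)$ and the continuity of $j$, you package the same duality as $|\beta_\lambda(r)|\le\delta\, r\beta_\lambda(r)+C_\delta$ via the superlinearity of $j^*$, which is an equivalent route to the same absorption.
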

\begin{proof}
\carlo{To shorten the notation, we will use the notation $\B \cdot \W$ to mean the stochastic integral of $\B$ with respect to $\W$. }
The approximated equation can be written as
  \[
  \frac{d}{dt} \big( (x_\lambda,y_\lambda) - \B\cdot\W\big) + \A_\eps(x_\lambda,y_\lambda)
  + \gamma_\lambda(x_\lambda, y_\lambda) = 0 \quad\text{a.e.~in } (0,T)\,, \quad\Pi\text{-a.s.}
  \]
  Moreover, thanks to \eqref{B_strong} and the choice of $\Z$ we have
  \[
  \B\cdot\W \in L^2(0,T; \V_\eps)\cap L^\infty(0,T; L^\infty(D)\times L^\infty(\Gamma)) \quad\Pi\text{-a.s.}
  \]
  Let then $\Omega'\in\f$ with $\Pi(\Omega')=1$ such that the two previous relations hold and fix $\omega\in \Omega'$.
  Testing (deterministically) the first equation by $(x_\lambda,y_\lambda)-\B\cdot\W$, we get
  \[
  \begin{split}
  \frac12&\l|(x_\lambda,y_\lambda)(t)-\B\cdot\W(t)\r|_\H^2 + 
  \int_0^t\left<A_\eps(x_\lambda, y_\lambda)(s), (x_\lambda,y_\lambda)(s)-\B\cdot\W(s)\right>_{\V_\eps}\,ds\\
  &+\int_0^t\int_D\beta_\lambda(x_\lambda(s))(x_\lambda(s)-B\cdot W(s))\,ds + 
  \int_0^t\int_\Gamma\beta_{\Gamma,\lambda}(y_\lambda(s))(y_\lambda(s)-B_\Gamma\cdot W_\Gamma(s))\,ds\\
  &=\frac12\l|(x_0,y_0)\r|_\H^2 \qquad\forall\,t\in[0,T]\,.
  \end{split}
  \]
  Rearranging the terms, using the regularity of $\B\cdot\W$, the Young inequality
  and the Lipschitzianity of $\pi$ and $\pi_\Gamma$ we infer that
  \[\begin{split}
  &\frac12\l|(x_\lambda,y_\lambda)(t)-\B\cdot\W(t)\r|_\H^2 + \int_0^t\l|\nabla x_\lambda(s)\r|_{H}^2\,ds
  +\eps\int_0^t\l|\nabla_\Gamma y_\lambda(s)\r|_{H_\Gamma}^2\,ds\\
  &\qquad\quad+\int_0^t\int_D\beta_\lambda(x_\lambda(s))x_\lambda(s)\,ds + 
  \int_0^t\int_\Gamma\beta_{\Gamma,\lambda}(y_\lambda(s))y_\lambda(s)\,ds\\
  &=\frac12\l|(x_0,y_0)\r|_\H^2 + \int_0^t\int_D\nabla x_\lambda(s)\cdot\nabla (B\cdot W)(s)\,ds
  +\eps\int_0^t\int_\Gamma\nabla_\Gamma y_\lambda(s)\cdot\nabla_\Gamma (B_\Gamma\cdot W_\Gamma)(s)\,ds\\
  &\qquad\quad+\int_0^t\int_D\beta_\lambda(x_\lambda(s))B\cdot W(s)\,ds + 
  \int_0^t\int_\Gamma \beta_{\Gamma,\lambda}(y_\lambda(s))B_\Gamma\cdot W_\Gamma(s)\,ds\\
  &\qquad\quad-\int_0^t\int_D\pi(x_\lambda(s))(x_\lambda(s)-B\cdot W(s))\,ds
  -\int_0^t\int_\Gamma\pi_\Gamma(y_\lambda(s))(y_\lambda(s)-B_\Gamma\cdot W_\Gamma(s))\,ds\\
  &\leq\frac12\l|(x_0,y_0)\r|_\H^2 + \frac12 \int_0^t\l|\nabla x_\lambda(s)\r|_{H}^2\,ds
  +\frac\eps2\int_0^t\l|\nabla_\Gamma y_\lambda(s)\r|_{H_\Gamma}^2\,ds\\
  &\qquad\quad+\frac{1\vee \eps}{2}\l|\B\cdot \W\r|^2_{L^2(0,T; \V_\eps)}
  +\frac12\int_0^t\int_Dj^*(\beta_\lambda(x_\lambda(s)))\,ds
  +\frac12\int_0^t\int_\Gamma j_\Gamma^*(\beta_{\Gamma, \lambda}(y_\lambda(s)))\,ds\\
  &\qquad\quad+\int_0^T\int_Dj(2(B\cdot W))
  + \int_0^T\int_\Gamma j_\Gamma(2(B_\Gamma\cdot W_\Gamma))\\
  &\qquad\quad+\left(\C_\P+\frac12\right)\int_0^t\l|(x_\lambda,y_\lambda)(s)-\B\cdot\W(s)\r|_\H^2\,ds
  +\frac{\C_\P^2}2\l|\B\cdot\W\r|^2_{L^2(0,T; \H)}\,.
  \end{split}\]
  Since $B\cdot W \in L^\infty((0,T)\times D)$ and $B_\Gamma\cdot W_\Gamma \in L^\infty((0,T)\times\Gamma)$,
  by continuity of $j$ and $j_\Gamma$ we have that 
  $j(2(B\cdot W))\in L^1((0,T)\times D)$ and $j_\Gamma(2(B_\Gamma\cdot W_\Gamma))\in L^1((0,T)\times \Gamma)$.
  Hence, recalling that on the left-hand side
  \[
  \beta_\lambda(x_\lambda)x_\lambda=j((I+\lambda\beta)^{-1}x_\lambda) + j^*(\beta_\lambda(x_\lambda))\,, \quad
  \beta_{\Gamma,\lambda}(y_\lambda)y_\lambda=j((I+\lambda\beta_\Gamma)^{-1}y_\lambda) + 
  j^*(\beta_{\Gamma,\lambda}(y_\lambda))\,,
  \]
  rearranging the terms and using the Gronwall lemma we can conclude.
\end{proof}
 
\begin{prop}\label{convergence}
For any $\omega \in \Omega'$ we can extract a 
subsequence $\lambda' = \lambda'(\omega)$ of $\lambda$ for which the following convergences hold as $\lambda' \to 0$:
\begin{align*}
&(x_{\lambda'},y_{\lambda'})(\omega,\cdot) \weakstar (x,y)(\omega,\cdot) &&\text{ in } L^\infty(0,T;\H)\,,\\
&(x_{\lambda'},y_{\lambda'})(\omega,\cdot) \rightharpoonup (x,y)(\omega,\cdot) &&\text{ in } L^2(0,T;\V_\eps)\,,\\
&\beta_{\lambda'}(x_{\lambda'}(\omega,\cdot)) \rightharpoonup \xi(\omega,\cdot) &&\text{ in } L^1((0,T)\times D)\,,\\
&\beta_{\Gamma,\lambda'}(y_{\lambda'}(\omega,\cdot)) \rightharpoonup \xi_\Gamma(\omega,\cdot) 
&&\text{ in } L^1((0,T)\times \Gamma)\,,\\
&(x_{\lambda'},y_{\lambda'})(\omega,\cdot) \to (x,y)(\omega,\cdot) &&\text{ in } L^2(0,T;\H)\,.
\end{align*}
\end{prop}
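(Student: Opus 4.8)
The plan is to run, for each fixed $\omega\in\Omega'$, a purely deterministic compactness argument resting on the uniform (in $\lambda$) bounds provided by Lemma~\ref{pathwise}; throughout, $\omega$ is fixed and all constants are allowed to depend on it. First I would extract the weak limits. By Lemma~\ref{pathwise} the family $\{(x_\lambda,y_\lambda)(\omega)\}_{\lambda\in(0,1)}$ is bounded in $L^\infty(0,T;\H)$ and in $L^2(0,T;\V_\eps)$. Since $\V_\eps$ is a Hilbert space, $L^2(0,T;\V_\eps)$ is reflexive, while $L^\infty(0,T;\H)$ is the dual of the separable space $L^1(0,T;\H)$; hence the Banach--Alaoglu theorem produces a subsequence $\lambda'=\lambda'(\omega)$ along which the first two convergences hold. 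The two limits agree as elements of $L^2(0,T;\H)$ and are denoted $(x,y)(\omega)$.

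Next I would treat the monotone terms, which are the genuinely $L^1$ objects. The Young--Fenchel relation recalled in the proof of Lemma~\ref{pathwise}, together with the nonnegativity of the discarded term $\lambda|\beta_\lambda|^2$, gives the pointwise bound $j^*(\beta_\lambda(x_\lambda))\leq\beta_\lambda(x_\lambda)x_\lambda$ and likewise for the boundary piece; hence $j^*(\beta_{\lambda}(x_\lambda))$ and $j^*_\Gamma(\beta_{\Gamma,\lambda}(y_\lambda))$ are bounded in $L^1((0,T)\times D)$ and $L^1((0,T)\times\Gamma)$, uniformly in $\lambda$. Because $j^*$ and $j_\Gamma^*$ are superlinear at infinity, the de la Vall\'ee--Poussin criterion shows that $\{\beta_{\lambda}(x_\lambda)\}$ and $\{\beta_{\Gamma,\lambda}(y_\lambda)\}$ are uniformly integrable, so by the Dunford--Pettis theorem they are weakly relatively compact in $L^1$. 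Refining $\lambda'$ yields the third and fourth convergences, with limits $\xi(\omega)$ and $\xi_\Gamma(\omega)$.

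The strong convergence is the delicate point, and to obtain it I would invoke a compactness lemma of Aubin--Lions--Simon type. Rewriting the approximated equation as $\frac{d}{dt}\big((x_\lambda,y_\lambda)-\B\cdot\W\big)=-\A_\eps(x_\lambda,y_\lambda)-\gamma_\lambda(x_\lambda,y_\lambda)$, the boundedness estimate of Lemma~\ref{properties} gives $\A_\eps(x_\lambda,y_\lambda)$ bounded in $L^2(0,T;\V_\eps^*)$, while the singular drift is controlled only in $L^1$. Exploiting the embedding $\Z\embed L^\infty(D)\times L^\infty(\Gamma)$ built into the choice of $\Z$, duality gives $L^1(D)\times L^1(\Gamma)\embed\Z^*$, so both terms are bounded in $L^1(0,T;\Z^*)$; consequently $\{(x_\lambda,y_\lambda)-\B\cdot\W\}$ is bounded in $L^2(0,T;\V_\eps)$ with time derivative bounded in $L^1(0,T;\Z^*)$. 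Using the compact embedding $\V_\eps\cembed\H$ and the continuous embedding $\H\embed\Z^*$, Simon's version of the Aubin--Lions lemma (which tolerates an $L^1$ bound on the derivative) yields relative compactness in $L^2(0,T;\H)$. Since $\B\cdot\W$ is a fixed element of $L^2(0,T;\V_\eps)$, adding it back and refining $\lambda'$ once more gives the fifth convergence; its strong limit coincides with the weak limit $(x,y)(\omega)$ by uniqueness of weak limits in $L^2(0,T;\H)$.

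The main obstacle I anticipate is precisely this last step: the drift $\gamma_\lambda(x_\lambda,y_\lambda)$ carries only an $L^1$ bound, so the classical reflexive Aubin--Lions lemma is unavailable and one must pass through the $L^1$-in-time derivative version together with the auxiliary negative space $\Z^*$ into which $L^1$ embeds. A secondary but essential technical point is the weak $L^1$ compactness of the monotone terms, where the superlinearity of the convex conjugates $j^*,j_\Gamma^*$ is exactly what upgrades the uniform $L^1$ bound to uniform integrability.
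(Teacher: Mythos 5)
Your proposal is correct and follows essentially the same route as the paper's proof: weak and weak-$*$ extraction from the pathwise bounds of Lemma~\ref{pathwise}; the Young--Fenchel inequality $j^*(\beta_\lambda(x_\lambda))\leq \beta_\lambda(x_\lambda)x_\lambda$ combined with de la Vall\'ee--Poussin and Dunford--Pettis (via superlinearity of $j^*, j_\Gamma^*$) for the weak $L^1$ compactness of the monotone terms; and Simon's compactness theorem applied to $(x_\lambda,y_\lambda)-\B\cdot\W$, whose time derivative is bounded in $L^1(0,T;\Z^*)$, for the strong convergence in $L^2(0,T;\H)$. The paper's argument is the same in every essential step, including the identification of the strong limit with the weak limit.
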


\begin{proof}
Let us fix $\omega\in\Omega'$. The first two convergences follow from Lemma~\ref{pathwise}.
Regarding the third one, recall that for any $u \in \Ar$ we have  
$\beta_\lambda(u) \in \partial j \left( (I + \lambda\beta)^{-1} u\right) = 
\beta\left( (I+\lambda\beta)^{-1}u\right)$ and $\beta_\lambda(u)(I+\lambda\beta)^{-1}u \geq 0$, 
so that
\begin{equation*}
j\left( (I+ \lambda\beta)^{-1}u \right) + j^*\left( \beta_\lambda(u)\right) = \beta_\lambda(u)(I+ \lambda\beta)^{-1}u 
\leq \beta_\lambda(u)u \qquad \forall \, u \in \Ar
\end{equation*}
thanks to the contraction property of the resolvent operator. 
Thanks to Lemma~\ref{pathwise} and
since $j^*$ is superlinear at infinity, for any $\omega\in\Omega'$
the sequence $(\beta_\lambda(x_\lambda(\omega)))_\lambda$ turns out 
to be weakly relatively compact  in $L^1((0,T) \times D)$ thanks to the de la Vall\'ee Poussin criterion 
along with Dunford-Pettis theorem.
Hence, we can extract a subsequence $\lambda'(\omega)$ 
which satisfies the required convergence. 
The same reasoning can be applied to $\beta_{\Gamma,\lambda}$ to get the fourth convergence statement. 
Finally, it remains to show the strong convergence of $(x_{\lambda'},y_{\lambda'})$ in $L^2(0,T;\H)$. 
To this end, going back to
\begin{equation*}
\frac{d}{dt} \big( (x_\lambda,y_\lambda) - \B\cdot\W\big) + \A_\eps(x_\lambda,y_\lambda)
  + \gamma_\lambda(x_\lambda, y_\lambda) = 0 \quad\text{a.e.~in } (0,T)\,,
\end{equation*}  
from Lemma~\ref{pathwise}, the boundedness of $\A_\eps$ and the fact that $\V_{\eps}^*\embed \mathcal{Z}^*$,
we have
\begin{equation*}
\begin{split}
&\l| \A_\eps(x_\lambda,y_\lambda)\r|_{L^1(0,T; \mathcal{Z}^*)} \lesssim
1+\l| (x_\lambda, y_\lambda) \r|_{L^2(0,T; \V_{\eps})}, \\
&\l| \gamma_\lambda(x_\lambda, y_\lambda)  \r|_{L^1(0,T; \mathcal{Z}^*)} \lesssim
\l| \gamma_\lambda(x_\lambda, y_\lambda)  \r|_{L^1(0,T; L^1(D) \times L^1(\Gamma))}.
\end{split}
\end{equation*}
Hence, by Lemma \ref{ito}, $\l|\frac{d}{dt}((x_\lambda,y_\lambda) - \B\cdot\W) \r|_{L^1(0,T; \mathcal{Z}^*)}$ 
is uniformly bounded in $\lambda$ and 
we can apply Simon's theorem (see \cite[Cor.~4, p.~85]{simon}) to get that 
$(x_\lambda, y_\lambda)$ is relatively compact in $L^2(0,T;\H)$. 
Then the weak convergence of  $(x_{\lambda'}, y_{\lambda'})$ towards $(x,y)$ in $L^2(0,T;\V_\varepsilon)$, implies that
\begin{equation*}
(x_{\lambda'}, y_{\lambda'}) \to (x,y) \qquad \text{ strongly in } L^2(0,T;\H),
\end{equation*} 
which is the required convergence.
\end{proof}

\subsection{The limit problem}
\label{limit_problem}

Now we are ready for the proof of the well-posedness of the equation \eqref{additive}, where the noise enter the system in an additive fashion. We divide the proof in several steps:

\textbf{Identification of the limit.}
Fix $\omega \in \Omega'$: in the sequel we do not emphasize the $\omega$-dependence as no confusion can arise. 
By Proposition \ref{convergence}, $(x_{\lambda'},y_{\lambda'}) \to (x,y)$ strongly in $L^2(0,T;\H)$, and
$(x_{\lambda'}(t),y_{\lambda'}(t)) \to (x(t),y(t))$ for a.e.~$t \in [0,T]$ up to passing to a further subsequence.  
As for the $\A_\eps$-part we write $\A_\eps = \C_\eps + \P$.
Firstly, we employ weak convergence $(x_{\lambda'},y_{\lambda'}) \rightharpoonup (x,y)$ in $L^2(0,T;\V_\eps)$ to get 
\begin{equation*}
\int_0^t \C_\eps(x_{\lambda'}(s),y_{\lambda'}(s))\,ds \rightharpoonup \int_0^t \C_\eps(x(s),y(s))\,ds \quad \text{ in } \V_\eps^*
\end{equation*}
for all $t \in [0,T]$. This is straightforward setting $\phi_0 \in \V_\eps$ and choosing as a test function 
$\phi: = s \mapsto 1_{[0,t]}(s)\phi_0 \in L^2(0,T;\V_\eps)$. 
Secondly, we simply use the strong convergence $(x_{\lambda'},y_{\lambda'}) \to (x,y)$ in $L^2(0,T;\H)$
and the Lipschitz continuity of $\P$
to get 
\begin{equation*}
\int_0^t\P(x_{\lambda'}(s),y_{\lambda'}(s))\,ds \to  \int_0^t \P(x(s), y(s))\,ds \quad\text{ in } \H
\end{equation*}
for every time $t \in [0,T]$. Regarding the monotone part we employ the weak convergence in 
$L^1(0,T; L^1(D) \times L^1(\Gamma))$ of $\gamma_{\lambda'}$  towards $(\xi,\xi_\Gamma)$ to easily get that
\begin{equation*}
\int_0^t \gamma_{\lambda'}(x_{\lambda'}(s), y_{\lambda'}(s))\,ds \rightharpoonup  
\int_0^t \big( \xi(s), \xi_{\Gamma}(s) \big)\,ds \quad\text{ in } L^1(D)\times L^1(\Gamma)
\end{equation*}
for all $t \in [0,T]$.
Summing up all the previous convergences, we get the limit equation
\begin{equation}\label{first limit equation}
\begin{split}
(x(t),y(t)) &+ \int_0^t \A_\eps(x(s),y(s))\,ds + \int_0^t (\xi(s),\xi_\Gamma(s))\,ds\\
&= (x_0, y_0) + \int_0^t \B(s)\,d\W_s, \quad \text{ in } \mathcal{Z}^*
\end{split}
\end{equation}
for almost every $t \in [0,T]$,
where $\mathcal{Z}$ is defined as in the proof of Proposition \ref{convergence}.
Since $\A_\eps (x,y)$, $(\xi,\xi_\Gamma)$ and
$\B \cdot \W$ belong to $L^1(0,T; \mathcal{Z}^*)$, we deduce that
$(x,y) \in C([0,T]; \mathcal{Z}^*)$ and then \eqref{first limit equation} holds for every $t \in [0,T]$.
Moreover, from a result due to Strauss \cite[Thm.~2.1]{strauss}, it follows that the solution is also weakly continuous in $\H$,
i.e.~$(x,y) \in C_w([0,T];\H)$.

\noindent It remains to show that $(\xi,\xi_\Gamma)$ belongs to $(\beta(x),\beta_\Gamma(y))$ a.e.~in $(0,T) \times D$. 
Let use the strong convergence of $(x_{\lambda'},y_{\lambda'}) \to (x,y)$ in $L^2(0,T;\H)$ 
to extract a subsequence (still denoted with $\lambda'$) so that both $(I + \lambda'\beta)^{-1}x_{\lambda'}$ and 
$(I + \lambda'\beta_\Gamma)^{-1}y_{\lambda'}$ would converge a.e. in $(0,T) \times D$ to $x$ and $y$, respectively. 
At this point remember that  $\beta_{\lambda'}(x_{\lambda'}) \in \beta\left( (I+\lambda\beta)^{-1}x_{\lambda'}\right)$ almost everywhere in $(0,T) \times D$ and that 
\begin{equation*}
\int_0^T\int_D \beta_{\lambda'}(x_{\lambda'})(I+\lambda'\beta)^{-1}x_{\lambda'} \leq 
\int_0^T\int_D \beta_{\lambda'}\left(x_{\lambda'} \right)x_{\lambda'} < K_\omega
\end{equation*}
because of Lemma \ref{ito} and the fact that $\omega\in\Omega'$. Then we are in position to apply Brezis'~lemma
\cite[Thm.~18, p.~126]{brezis3} and we get the identification $\xi \in \beta(x)$. 
The same reasoning holds for the monotone operator $\beta_\Gamma$ on the boundary and the claim is proved.

\noindent Finally, using the lower semicontinuity of the convex integrands for the weak convergence 
and Lemma~\ref{pathwise},
we have that the limit solution satisfies
\begin{equation*}
\begin{split}
\int_0^T \int_D \left( j(x) + j^*(\xi) \right) &\leq 
\liminf_{\lambda' \to 0} \int_0^T \int_D \left( j((I + \lambda'\beta)^{-1}x_{\lambda'}) + j^*(\beta_{\lambda'}(x_{\lambda'})) \right)\\
&=  \liminf_{\lambda' \to 0} \int_0^T \int_D \beta_{\lambda'}(x_{\lambda'}) (I + \lambda'\beta)^{-1}x_{\lambda'} \leq C_1,
\end{split}
\end{equation*}
\begin{equation*}
\begin{split}
\int_0^T \int_\Gamma \left( j_\Gamma(y) + j_\Gamma^*(\xi_\Gamma) \right)&\leq 
\liminf_{\lambda' \to 0} \int_0^T \int_\Gamma \left( j((I + \lambda'\beta_\Gamma)^{-1}y_{\lambda'}) 
+ j^*(\beta_{\Gamma,\lambda'}(y_{\lambda'})) \right)\\
&=  \liminf_{\lambda' \to 0} \int_0^T \int_\Gamma \beta_{\Gamma,\lambda'}(y_{\lambda'}) 
(I + \lambda'\beta_\Gamma)^{-1}y_{\lambda'} \leq C_2\,,
\end{split}
\end{equation*}
where $C_1,C_2$ are constants which depend only on $\omega$.

\textbf{Uniqueness.}
Here we show a conditional uniqueness result for equation \eqref{first limit equation} with $\omega$ fixed.
By contradiction, suppose that $(x_i,y_i,\xi_i,\xi_{\Gamma,i})$ satisfy equation \eqref{first limit equation} and
are such that $(x_i,y_i)\in L^\infty(0,T;\H)\cap L^2(0,T;\V_\eps)$, 
$(\xi_i, \xi_{\Gamma,i})\in L^1(0,T; L^1(D)\times L^1(\Gamma))$,
$j(x_i)+j^*(\xi_i)\in L^1((0,T)\times D)$, $j_\Gamma(y_i)+j_\Gamma^*(\xi_{\Gamma,i})\in L^1((0,T)\times \Gamma)$.
Setting $(\bar x, \bar y) = (x_1,y_1) - (x_2,y_2)$, $(\bar\xi,\bar\xi_\Gamma) = (\xi_1,\xi_{\Gamma,1}) -(\xi_2,\xi_{\Gamma,2})$
and $\bar\P:=\P(x_1,y_1)-\P(x_2,y_2)$ we have that
\begin{equation}\label{eq:uniq}
(\bar x(t),\bar y(t)) + \int_0^t \C_\eps(\bar x(s),\bar y(s))\,ds + \int_0^t\bar\P(s)\,ds+
 \int_0^t (\bar \xi(s),\bar \xi_\Gamma(s))\,ds = 0 \quad \forall\,t \in [0,T]
\end{equation}
and it is enough to show that $(\bar x, \bar y) = 0$ and $(\bar\xi,\bar\xi_\Gamma) = 0$ for every $t \in [0,T]$.
The idea is to smooth out the equation, recover some structure condition and then use strong convergence to pass to the 
limit in the approximation. To this end, we multiply the above equation by a power (high enough) of the resolvent of $\C_\eps$. 
Remember that the strong formulation of $\C_\eps$ is 
\[ \C_\eps(u,v):= (-\Delta u,  \partial_{\bf n} u - \eps \Delta_\Gamma  v) \]
and that we are able to show regularizing properties of the resolvent operator 
associated to $\C_\eps$, which are presented in detail in the Appendix.
In particular, we can prove (see Corollary \ref{cor_Ceps}) that there is $m \in \mathbb{N}$ so that 
$(I+ \delta \C_\eps)^{-m}$ is ultracontractive, i.e.~that it is well-defined, linear and continuous 
from $L^1(D)\times L^1(\Gamma)$ to 
$L^\infty(D)\times L^\infty(\Gamma)$. If we use the notation $(u,v)^\delta=(u^\delta,v^\delta):=(I+\delta\C_\eps)^{-m}(u,v)$
for any $(u,v)$ for which it makes sense and for any $\delta>0$,
then we have
\begin{equation*}
(\bar x,\bar y)^\delta(t) + \int_0^t \C_\eps(\bar x,\bar y)^\delta(s)\,ds + \int_0^t \bar\P^\delta(s) ds + 
\int_0^t (\bar \xi,\bar \xi_\Gamma)^\delta(s)\,ds = 0
\end{equation*}
for every $t\in[0,T]$.
Applying energy estimates (with $\omega$ still fixed) of the form $e^{-rt}\l| \cdot \r|_\H^2$, with $r>0$, we get
\begin{equation*}
\begin{split}
e^{-rt} \l|(\bar x,\bar y)^\delta(t)\r|^2_\H &+
2\int_0^te^{-rs}\l|\nabla \bar x^\delta(s)\r|_H^2\,ds+2\eps\int_0^te^{-rs}\l|\nabla_\Gamma \bar y^\delta(s)\r|^2_{H_\Gamma}\,ds\\
&+2r\int_0^te^{-rs}\l|(\bar x,\bar y)^\delta(s)\r|^2_\H\,ds+
2\int_0^te^{-rs}\left(\bar\P^\delta(s), (\bar x,\bar y)^\delta(s)\right)_\H\,ds\\
&+2 \int_0^t e^{-rs}\left((\bar\xi, \bar \xi_\Gamma)^\delta (s),(\bar x,\bar y)^\delta (s) \right)_\H\,ds = 0\,.
\end{split}
\end{equation*}
By the monotonicity of $\C_\eps$, the Lipschitz continuity of $\P$ and the Gronwall lemma we have
\beq\label{eq:approx_uniq}
e^{-rt} \l|(\bar x,\bar y)^\delta(t)\r|^2_\H 
+2 \int_0^t e^{-rs}\left((\bar\xi, \bar \xi_\Gamma)^\delta (s),(\bar x,\bar y)^\delta (s) \right)_\H\,ds\leq0\,.
\eeq
Thanks to Lemma \ref{lm_Ceps5} we know that $(\bar x,\bar y)^\delta(t) \to (\bar x,\bar y)(t)$ in $\H$
for all $t \in [0,T]$. This yields the convergence of $\l| (\bar x,\bar y)^\delta(t) \r|_\H^2$ towards $ \l| (\bar x,\bar y)(t) \r|_\H^2$. 
Concerning the second term, let us carefully rewrite it using the projection of the resolvent map defined in the Appendix. 
To shorten the notation here we denote by $(\bar x,\bar y)^\delta_i$ the projection on the $i$-th coordinate: 
$p_i \circ (I + \delta \C_\eps)^{-m}(\bar x,\bar y)$, $i = 1,2$ (the same for $(\bar \xi, \bar \xi_\Gamma)^\delta_i$). 
Hence we have
\begin{equation*}
\begin{split}
\left( (\bar x,\bar y)^\delta , (\bar \xi, \bar \xi_\Gamma)^\delta \right)_\H &= 
\left( (I + \delta \C_\eps)^{-m}(\bar x,\bar y) , (I + \delta \C_\eps)^{-m}(\bar \xi,\bar \xi_\Gamma) \right)_\H \\
&= \left( \left( (\bar x,\bar y)^\delta_1, (\bar x,\bar y)^\delta_2 \right) , 
\left( (\bar \xi, \bar \xi_\Gamma)^\delta_1, (\bar \xi, \bar \xi_\Gamma)^\delta_2 \right) \right)_\H \\
&= \big( (\bar x,\bar y)^\delta_1,(\bar \xi, \bar \xi_\Gamma)^\delta_1 \big)_H +
 \big( (\bar x,\bar y)^\delta_2, (\bar \xi, \bar \xi_\Gamma)^\delta_2 \big)_{H_\Gamma} \\
&= \int_D (\bar x,\bar y)^\delta_1 (\bar \xi, \bar \xi_\Gamma)^\delta_1 
+ \int_{\Gamma} (\bar x,\bar y)^\delta_2 (\bar \xi, \bar \xi_\Gamma)^\delta_2\,.
\end{split}
\end{equation*}  
Since $(\bar x,\bar y)^\delta \to (\bar x,\bar y)$ in $L^2(0,T;\H)$
and $(\bar\xi, \bar \xi_\Gamma)^\delta \to (\bar\xi, \bar \xi_\Gamma)$ in $L^1(0,T; L^1(D) \times L^1(\Gamma))$,
we can extract a subsequence of $\delta$, still denoted by the same symbol, 
such that $(\bar x,\bar y)^\delta \to (\bar x,\bar y)$ and $(\bar\xi, \bar \xi_\Gamma)^\delta \to (\bar\xi, \bar \xi_\Gamma)$
a.e.~in $(0,T) \times D\times \Gamma$.
The continuity of the projection maps $p_1, p_2$ yield the following convergence result:
\begin{equation*}
\begin{split}
(\bar x,\bar y)^\delta_1 (\bar\xi, \bar \xi_\Gamma)^\delta_1 &\longrightarrow \bar x\bar\xi
\quad \text{ a.e.~in }  (0,t) \times D\,, \\
(\bar x,\bar y)^\delta_2 (\bar\xi, \bar \xi_\Gamma)^\delta_2 &\longrightarrow \bar y \bar\xi_\Gamma
\quad \text{ a.e.~in }  (0,t) \times \Gamma\,.
\end{split}
\end{equation*}
Moreover, thanks to the assumptions (H1), (H2) or (H$3_{\eps>0}$)-(H$3_{\eps=0}$) on the compatibility of 
the potentials $j, j_\Gamma$ and Corollary \ref{cor_Ceps2} in the Appendix,
the families $(\bar x,\bar y)^\delta_1(\bar \xi, \bar \xi_\Gamma)^\delta_1$ and 
$(\bar x,\bar y)^\delta_2(\bar \xi, \bar \xi_\Gamma)^\delta_2$, 
with $i = 1,2$, are uniformly integrable: hence, using Vitali's theorem we infer that
$(\bar x,\bar y)^\delta_1 (\bar\xi, \bar \xi_\Gamma)^\delta_1 \rarr \bar x\bar\xi$ and
$(\bar x,\bar y)^\delta_2 (\bar\xi, \bar \xi_\Gamma)^\delta_2 \rarr \bar y \bar\xi_\Gamma$ in
$L^1((0,t)\times D)$ and $L^1((0,t)\times \Gamma)$, respectively.
Letting then $\delta\searrow0$ in \eqref{eq:approx_uniq}, we get that
\begin{equation*}
e^{-rt} \l|(\bar x,\bar y)(t)\r|^2_\H + 2 \int_0^t \int_D e^{-rs} \bar x\bar \xi
+ 2 \int_0^t \int_{\Gamma} e^{-rs}\bar y\bar \xi_\Gamma \leq 0\,.
\end{equation*}
From the monotonicity of $(\beta,\beta_\Gamma)$
 we deduce that $(\bar x, \bar y)(t) = 0$ for all $t \in [0,T]$. Then also $\int_0^t(\xi,\xi_\Gamma)(s)ds = 0$
for all $t\in[0,T]$ from equation \eqref{eq:uniq} and we get the uniqueness due to the arbitrariness of $t \in [0,T]$.

\textbf{Regularity in $\omega$.}
 So far we have worked with $\omega \in \Omega'$ fixed. Now we are interested in showing the regularity
 of the solution with respect to $\omega$, starting from the issue of measurability. 
We follow the same ideas developed in \cite{mar-scar}, which we briefly resume here for the convenience of the reader, 
with obvious adaptations to our setting. The key point is the uniqueness result obtained for the solution, which guarantees 
that the subsequence $\lambda'$ actually does not depend on $\omega \in \Omega'$. 
The proof relies on a standard argument: from any subsequence of $\lambda$ we can extract a further subsequence $\lambda'$,
 depending on $\omega$, such that all the convergences obtained in Proposition \ref{convergence} take place. 
 Since the limit is unique, then the same results holds for the entire sequence $\lambda$ which does not depend on $\omega$ 
 anymore.  

\noindent From Proposition \ref{convergence} we know that $(x_\lambda,y_\lambda)(\omega, \cdot) 
\rarr(x,y)(\omega,\cdot)$ strongly in $L^2(0,T;\H)$, which implies the convergence of $(x,y)(\omega,t)$ in  
$\mathbb{P}\otimes dt$-measure to the same limit. Then, along a subsequence we have convergence 
$\mathbb{P}\otimes dt$-a.e.~and this is enough to ensure the predictability of the limit $(x,y)$ 
(since $(x_\lambda,y_\lambda)$ is adapted with continuous trajectories).
Concerning the singular part $(\xi,\xi_\Gamma)$ we have to be more careful. First, we set
$\xi_\lambda:=\beta_\lambda(x_\lambda)$, $\xi_{\Gamma,\lambda}:=\beta_{\Gamma,\lambda(y_\lambda)}$ and
define for any $g \in L^\infty((0,T) \times D)$, $h \in L^\infty((0,T) \times \Gamma)$
\begin{align*}
F_\lambda(\omega):= \int_0^T \int_D \xi_\lambda(\omega,s,z)g(s,z)\,dzds, \quad 
F(\omega):= \int_0^T \int_D \xi(\omega,s,z)g(s,z)\,dzds\,, \\
F_{\Gamma,\lambda}(\omega):= \int_0^T \int_\Gamma \xi_{\Gamma,\lambda}(\omega,s,\zeta)h(s,\zeta)\,d\zeta ds\,, \quad
 F_{\Gamma}(\omega):= \int_0^T \int_\Gamma \xi_{\Gamma}(\omega,s,\zeta)h(s,\zeta)\,d\zeta ds\,.
\end{align*}
Then, since $\xi_\lambda\rarrw\xi$ and $\xi_{\Gamma,\lambda}\rarrw\xi_\Gamma$
in $L^1((0,T)\times D)$ and $L^1((0,T)\times \Gamma)$, respectively, we have that
$F_\lambda\rarr F$ and $F_{\Gamma,\lambda}\rarr F_\Gamma$ $\Pi$-almost surely in $\Omega$.
What we are going to show now is the following. We first prove that
$F_\lambda$ and $F_{\Gamma,\lambda}$ converge weakly in $L^1(\Omega)$ to $F$ and $F_\Gamma$, respectively,
so that 
$(\xi_\lambda,\xi_{\Gamma,\lambda})\rarrw(\xi,\xi_\Gamma)$ in $L^1(\Omega; L^1(0,T; L^1(D)\times L^1(\Gamma)))$.
Then, using the
Mazur lemma, we will infer measurability of the limit $(\xi, \xi_\Gamma)$.

\noindent Let us show some details for $F_\lambda$; the same technique can be adopted for $F_{\Gamma, \lambda}$.
Firstly, for any $l\in L^\infty(\Omega)$, setting $j_0(\cdot):= j^*(\cdot / M)$ and 
$M= 1/ \big[ (\l| g \r|_{L^\infty((0,T) \times D)} \vee 1)(\l| l \r|_{L^\infty( \Omega)} \vee 1) \big]$,
we use Jensen's inequality to get 
\begin{equation*}
\E j_0(F_\lambda l) = \E j_0 \left( \int_0^T \int_D \xi_\lambda(\omega,s,z)g(s,z)l(\omega)\, dz ds \right)
\lesssim \E \int_0^T \int_D j^*(\xi_\lambda(\omega,s,z))\, dz ds\,.
\end{equation*}
The last term is bounded due to Lemma~\ref{ito}: then by the de la Vall\'ee Poussin criterion 
the sequence $F_\lambda l$ is uniformly integrable. This yields the strong convergence
$F_\lambda l\rarr F l$ in $L^1(\Omega)$ thanks to Vitali's theorem. From the arbitrariness of the function $l$
we get weak convergence in $L^1(\Omega)$ of $(F_\lambda)_\lambda$, thus the convergence of $\xi_\lambda$ to $\xi$  
in $L^1(\Omega \times (0,T) \times D)$. Applying Mazur's lemma we extract a convex combination 
$(\tilde \xi_\lambda)_\lambda$ of $(\xi_\lambda)_\lambda$ 
which converge strongly to $\xi$ in $L^1(\Omega\times(0,T)\times D)$.
To conclude note that $\tilde \xi_\lambda$ are predictable 
(as finite convex combination of predictable processes), hence the limit $\xi$ is a predictable $L^1(D)$-valued process.   

\noindent Once we have the measurability of the limit solution $(x,y,\xi, \xi_\Gamma)$, 
we are interested in showing some estimates in expectation. To this end, we just use lower semicontinuity 
of the norms with respect to the weak convergence and Fatou's lemma:
\begin{align*}
\E \l|  (x,y) \r|^2_{L^2(0,T; \V_\eps)} \leq \E \liminf_{\lambda \to 0} \l|  (x_\lambda,y_\lambda) \r|^2_{L^2(0,T; \V_\eps)} \leq
 \liminf_{\lambda \to 0} \E \l| (x_\lambda,y_\lambda) \r|^2_{L^2(0,T; \V_\eps)}\,, \\
\E \l|  (x,y) \r|^2_{L^\infty(0,T; \H)} \leq \E \liminf_{\lambda \to 0} \l|  (x_\lambda,y_\lambda) \r|^2_{L^\infty(0,T; \H)} \leq
 \liminf_{\lambda \to 0} \E \l| (x_\lambda,y_\lambda) \r|^2_{L^\infty(0,T; \H)}\,, \\
\E \l|  \xi \r|^2_{L^1(0,T; L^1(D))} \leq \E \liminf_{\lambda \to 0} \l|  \xi_\lambda \r|^2_{L^1(0,T; L^1(D))} \leq
 \liminf_{\lambda \to 0} \E \l| \xi_\lambda \r|^2_{L^1(0,T; L^1(D))}\,, \\
\E \l|  \xi_\Gamma \r|^2_{L^1(0,T; L^1(\Gamma))} \leq 
\E \liminf_{\lambda \to 0} \l|  \xi_{\Gamma,\lambda} \r|^2_{L^1(0,T; L^1(\Gamma))} 
\leq \liminf_{\lambda \to 0} \E \l| \xi_{\Gamma,\lambda} \r|^2_{L^1(0,T; L^1(\Gamma))}\,.
\end{align*}
Thanks to Lemma \ref{ito} and Proposition \ref{convergence} the right hand side of all the above inequalities is bounded
uniformly in $\lambda$, hence 
\begin{align*}
(x,y) \in L^2\left(\Omega; L^\infty(0,T; \H)\right) \cap L^2\left(\Omega\times(0,T); \V_\eps\right),\\
  \xi \in L^1\left(\Omega\times(0,T)\times D\right), \qquad \xi_\Gamma \in L^1\left(\Omega\times(0,T)\times \Gamma\right),
\end{align*} 
and we have the required regularity.  
Finally, as we did at the beginning of Subsection \ref{limit_problem},
the weak lower semicontinuity of the convex integrands, Lemma \ref{ito} and the fact that the subsequence $\lambda$
does not depend on $\omega$ ensure also that
\[
  j(x)+j^*(\xi)\in L^1(\Omega\times(0,T)\times D)\,, \qquad 
  j(y)+j_\Gamma^*(\xi_\Gamma)\in L^1(\Omega\times(0,T)\times \Gamma)\,.
\]
This completes the proof of existence of solutions for the problem with additive and more regular noise.

\subsection{Continuous dependence on the data}
\label{cont_dep}

Here we prove a continuous dependence result for equation \eqref{additive} for every $\eps\geq0$.

We consider $(x_0^i,y_0^i)\in L^2(\Omega,\f_0,\Pi; \H)$ and $\B_i\in L^2(\Omega; L^2(0,T; \cL_2(\U,\H)))$ for $i=1,2$:
let $(x_i,y_i,\xi_i,\xi_{\Gamma,i})$ be any corresponding strong solutions to \eqref{additive},
$i=1,2$. 
Setting $(x,y):=(x_1,y_1)-(y_1,y_2)$, $(\xi,\xi_\Gamma):=(\xi_1,\xi_{\Gamma,1})-(\xi_2,\xi_{\Gamma,2})$, $\eta:=\P(x_1,y_1)-\P(x_2,y_2)$,
$x_0:=x_0^1-x_0^2$, $y_0:=y_0^1-y_0^2$ and $\B:=\B_1-\B_2$, we have
\[
  (x(t),y(t)) + \int_0^t\C_\eps(x(s),y(s))\,ds + \int_0^t\eta(s)\,ds
  +\int_0^t(\xi(s),\xi_\Gamma(s))\,ds=(x_0,y_0) + \int_0^t\B(s)\,d\W_s
\]
for every $t\in[0,T]$, $\Pi$-almost surely. Now, the idea is to proceed as in 
Subsection \ref{limit_problem} when we proved uniqueness for the limit problem:
from now on, we refer to the Appendix for any 
useful properties on the resolvent of $\C_\eps$.
If $m\in\En$ is given by Corollary \ref{cor_Ceps}, using again the notation
$(h_1,h_2)^\delta=(h_1^\delta,h_2^\delta):=(I+\delta\C_\eps)^{-m}h$ for any $h$ for which it makes sense, we have
\[
  (x,y)^\delta(t) + \int_0^t\C_\eps(x,y)^\delta(s)\,ds + \int_0^t\eta^\delta(s)\,ds
  +\int_0^t(\xi,\xi_\Gamma)^\delta(s)\,ds = (x_0,y_0)^\delta + \int_0^t\B^\delta(s)\,d\W_s\,.
\]
Thanks to the smoothing properties of $(I+\delta\C_\eps)^{-m}$, we can apply the classical It\^o's formula
for $e^{-rt}\l|\cdot\r|^2_\H$ to get
\[
  \begin{split}
  e^{-rt} &\l|(x,y)^\delta(t)\r|^2_\H +
  2\int_0^te^{-rs}\l|\nabla x^\delta(s)\r|_H^2\,ds+2\eps\int_0^te^{-rs}\l|\nabla_\Gamma y^\delta(s)\r|^2_{H_\Gamma}\,ds\\
  &+2r\int_0^te^{-rs}\l|(x,y)^\delta(s)\r|^2_\H\,ds+
  2\int_0^te^{-rs}\left(\eta^\delta(s),(x,y)^\delta(s)\right)_\H\,ds\\
  &+2 \int_0^t e^{-rs}\left((\xi, \xi_\Gamma)^\delta (s),(x,y)^\delta (s) \right)_\H\,ds \\
  &=\l|(x_0,y_0)^\delta\r|^2_\H
  +\int_0^te^{-rs}\l|\B^\delta(s)\r|^2_{\cL_2(\U,\H)}\,ds + 2\int_0^te^{-rs}(x,y)^\delta(s)\B^\delta(s)\,d\W_s\,.
  \end{split}
\]
We want to pass to the limit as $\delta\searrow0$ in the previous expression and
obtain in this way an It\^o's formula for the limit processes.
To this aim, the terms on the left hand side have already been handled in Subsection \ref{limit_problem}
when dealing with uniqueness of the limit problem.
Moreover, by virtue of Lemma \ref{lm_Ceps5}, the dominated convergence theorem
and the ideal property of $\cL_2(\U,\H)$ in $\cL(\U,\H)$ we have that 
$\l|(x_0,y_0)^\delta\r|^2_\H\rarr\l|(x_0,y_0)\r|^2_\H$
and $\int_0^te^{-rs}\l|\B^\delta(s)\r|^2_{\cL_2(\U,\H)}\,ds\rarr \int_0^te^{-rs}\l|\B(s)\r|^2_{\cL_2(\U,\H)}\,ds$
as $\delta\searrow0$. Hence, we only need to check the convergence of the stochastic integral:
note that, using the notation $f^r$ to indicate the process $f^r(t):=e^{-\frac{r}{2}t}f(t)$ for any process $f$,
thanks to the Burkholder-Davis-Gundy inequality we have
\[
  \begin{split}
  &\exval\sup_{t\in[0,T]}\left|\int_0^te^{-rs}\left[(x,y)^\delta(s)\B^\delta(s)-(x,y)(s)\B(s)\right]\,d\W_s\right|\\
  &\lesssim
  \exval\left[\left(\int_0^Te^{-2rs}\l|(x,y)^\delta(s)\r|^2_\H\l|\B^\delta(s)-\B(s)\r|^2_{\cL_2(\U,\H)}\,ds\right)^{1/2}\right]\\
  &\qquad\qquad+\exval\left[\left(\int_0^Te^{-2rs}\l|(x,y)^\delta(s)-(x,y)(s)\r|^2_\H
  \l|\B(s)\r|^2_{\cL_2(\U,\H)}\,ds\right)^{1/2}\right]\\
  &\leq\l|(x,y)^r\r|_{L^2(\Omega; L^\infty(0,T; \H))}\l|\B^{\delta,r}-\B^r\r|_{L^2(\Omega; L^2(0,T; \cL_2(\U,\H)))}\\
  &\qquad\qquad+\l|(x,y)^{\delta,r}-(x,y)^r\r|_{L^2(\Omega; L^\infty(0,T; \H))}\l|\B^r\r|_{L^2(\Omega; L^2(0,T; \cL_2(\U,\H)))}
  \end{split}
\]
and the last term converges to $0$ as $\delta\searrow0$ by Lemma \ref{lm_Ceps5}, the dominated convergence
theorem and the fact that $\B^{\delta,r}\rarr\B^r$ in $L^2(\Omega; L^2(0,T; \cL_2(\U,\H)))$. Consequently,
taking everything into account, we infer that the following It\^o's formula holds:
\[
  \begin{split}
  &\l|(x,y)^r(t)\r|^2_\H +
  2\int_0^t\l|\nabla x^r(s)\r|_H^2\,ds+2\eps\int_0^t\l|\nabla_\Gamma y^r(s)\r|^2_{H_\Gamma}\,ds
  +2r\int_0^t\l|(x,y)^r(s)\r|^2_\H\,ds\\
  &+2\int_0^t\left(\eta^r(s),(x,y)^r(s)\right)_\H\,ds+
  2 \int_0^t\int_D\xi^r(s)x^r(s)\,ds + 2\int_0^t\int_\Gamma\xi_\Gamma^r(s)y^r(s)\,ds \\
  &=\l|(x_0,y_0)\r|^2_\H
  +\int_0^t\l|\B^r(s)\r|^2_{\cL_2(\U,\H)}\,ds + 2\int_0^t(x,y)^r(s)\B^r(s)\,d\W_s\,.
  \end{split}
\]
Using the Lipschitz continuity of $\P$, the Gronwall lemma,
the monotonicity of $(\beta,\beta_\Gamma)$ and the weak coercivity of $\C_\eps$,
taking supremum in time and expectations we get
\[
  \begin{split}
  &\l|(x,y)^r(t)\r|^2_{L^2(\Omega; L^\infty(0,T; \H))} + r \l|(x,y)^r\r|_{L^2(\Omega; L^2(0,T; \H))}^2+
  \l|(x,y)^r\r|_{L^2(\Omega; L^2(0,T; \V_\eps))}^2 \\
  &\lesssim\l|(x_0,y_0)\r|^2_{L^2(\Omega;\H)} + \l|\B^r\r|^2_{L^2(\Omega; L^2(0,T;\cL_2(\U,\H)))}+
  \exval\sup_{t\in[0,T]}\left|\int_0^t(x,y)^r(s)\B^r(s)\,d\W_s\right|\,.
  \end{split}
\]
Estimating the last term as in the proof of Lemma \ref{ito}, we deduce that 
\beq
  \label{continuous}
  \begin{split}
  \l|(x,y)^r(t)\r|_{L^2(\Omega; L^\infty(0,T; \H))} &+ \sqrt{r} \l|(x,y)^r\r|_{L^2(\Omega; L^2(0,T; \H))}+
  \l|(x,y)^r\r|_{L^2(\Omega; L^2(0,T; \V_\eps))} \\
  &\lesssim\l|(x_0,y_0)\r|_{L^2(\Omega;\H)} + \l|\B^r\r|_{L^2(\Omega; L^2(0,T;\cL_2(\U,\H)))}\,,
  \end{split}
\eeq
where the desired continuous dependence relation for the problem with additive noise
follows choosing $r=0$.
This implies that the solution $(x,y,\xi,\xi_\Gamma)$ built as limit of Yosida approximations
of the problem is indeed the unique strong solution to the problem.

\subsection{Extension to general additive noise}
\carlo{Here we conclude the proof of existence of a solution to \eqref{additive}, with general additive noise: namely,
we remove the hypothesis \eqref{B_strong} and we assume just that
\[
  \B \in L^2\left(\Omega; L^2(0,T; \H)\right)\,.
\]
To do it, we follow the same ideas as in \cite[Prop.~5.1]{mar-scar}.}
Using the notation of the Appendix, for any $\delta \in (0,1)$ and $m$ given 
by Corollary~\ref{cor_Ceps} we have
\[
  \B^\delta :=(I+\delta\C_\eps)^{-m}\B \in L^2\left(\Omega; L^2(0,T; \Z)\right)
\]
for a certain Hilbert space $\Z \embed \V_\eps \cap L^\infty(D)\times L^\infty(\Gamma)$.
Hence, the problem with additive noise $\B^\delta$ admits a unique solution $(x^\delta,y^\delta, \xi^\delta, \xi_\Gamma^\delta)$
for what we have already proved.

Since by contraction of the resolvent
$\l|\B^\delta\r|_{L^2\left(\Omega; L^2(0,T; \H)\right)}\leq \l|\B\r|_{L^2\left(\Omega; L^2(0,T; \H)\right)}$
for every $\delta\in(0,1)$, by Lemma~\ref{ito} and the weak lower semicontinuity of the norms we have that
\begin{gather*}
\l|(x^\delta, y^\delta)\r|^2_{L^2(\Omega; L^\infty(0,T; \H))\cap L^2(\Omega; L^2(0,T; \V_\eps))}
+\l|\xi^\delta x^\delta\r|_{L^1(\Omega\times(0,T)\times D)} + 
\l|\xi^\delta_\Gamma y^\delta\r|_{L^1(\Omega\times(0,T)\times D)}\leq K
\end{gather*}
for a positive constant $K$, for every $\delta \in (0,1)$. Moreover,
by the continuous dependence result proved in the previous section, 
for every $\delta, \eta\in(0,1)$ we have
\[
\l|(x^\delta,y^\delta)-(x^\eta, y^\eta)\r|_{L^2(\Omega; L^\infty(0,T; \H))\cap L^2(\Omega; L^2(0,T; \V_\eps))}
\lesssim \l|\B^\delta-\B^\eta\r|_{L^2(\Omega; L^2(0,T;\H))}\to 0 
\]
as $\delta,\eta\searrow0$, so that $\{(x^\delta,y^\delta)\}_\delta$ is Cauchy in 
$L^2(\Omega; L^\infty(0,T; \H))\cap L^2(\Omega; L^2(0,T; \V_\eps))$. Now, recalling that 
\[
  \xi^\delta x^\delta = j(x^\delta) + j^*(\xi^\delta)\,, \quad
  \xi_\Gamma^\delta y^\delta = j_\Gamma(y^\delta) + j_\Gamma^*(\xi_\Gamma^\delta)\,,
\]
since $j^*$ and $j_\Gamma^*$ are superlinear, the de la Vall\'ee-Poussin and Dunford-Pettis theorems
ensure that $\{\xi^\delta\}_\delta$ and $\{\xi_\Gamma^\delta\}_\delta$ are relatively weakly compact in
$L^1(\Omega\times(0,T)\times D)$ and $L^1(\Omega\times(0,T)\times\Gamma)$, respectively.
Taking this information into account, we deduce that (along a subsequence)
\begin{align*}
  (x^\delta,y^\delta)\to (x,y) \qquad&\text{in } L^2(\Omega; L^\infty(0,T; \H))\cap L^2(\Omega; L^2(0,T; \V_\eps))\,,\\
  \xi^\delta \rarrw \xi\qquad&\text{in } L^1(\Omega\times(0,T)\times D)\,,\\
  \xi_\Gamma^\delta \rarrw\xi_\Gamma\qquad&\text{in } L^1(\Omega\times(0,T)\times \Gamma)\,.
\end{align*}
Using these convergences, the weak lower semicontinuity of the convex integrands 
and the strong-weak closure of the maximal monotone graphs as in the passage to the limit
in $\lambda$, it is not difficult to check that $(x,y,\xi, \xi_\Gamma)$ is a strong solution
for the problem with additive noise.

\subsection{Well-posedness with multiplicative noise}

We collect here the conclusion of the proof of Theorem \ref{teo1}, showing that 
the original problem \eqref{prob} is well-posed also with multiplicative noise.

Given a progressively measurable process $(z,w)\in L^2(\Omega; L^2(0,T; \H))$, thanks to the hypotheses
on $\B$, we have that $\B(\cdot,\cdot, z,w)\in L^2(\Omega; L^2(0,T; \cL_2(\U,\H)))$ and is progressively 
measurable. By the well-posedness result with additive noise proved in the previous sections, the problem
\[
  d(x_t,y_t) + \A_\eps(x_t,y_t)\,dt + \gamma(x_t,y_t)\,dt \ni \B(t,z_t,w_t)\,d\W_t\,, \quad (x(0),y(0))=(x_0,y_0)\,,
\]
is well-posed. Hence, it is well-defined the map
\begin{align*}
  \mathcal{S}_\eps: L^2\left(\Omega; L^2(0,T; \H)\right)&\rarr L^2\left(\Omega; L^\infty(0,T; \H)\right)\cap 
  L^2\left(\Omega; L^2(0,T;\V_\eps)\right)\,,\\
  \mathcal{S}_\eps:(z,w)&\mapsto(x,y)\,.
\end{align*}
It is clear that any fixed point $(x,y)$ for $\mathcal{S}_\eps$ (together with its corresponding $(\xi,\xi_\Gamma)$)
is a strong solution to \eqref{prob}. 
If we introduce for any $p\in[1,+\infty]$ the norms on $L^p(0,T)$
given by $\l|f\r|_{L^p_r(0,T)}:=\l|f^r\r|_{L^p(0,T)}$, for $f\in L^p(0,T)$,
using \eqref{continuous} and the Lipschitz continuity of $\B$, it is immediate to check that
\[
  \begin{split}
  \l|\mathcal{S}_\eps(z_1,w_1)-\mathcal{S}_\eps(z_2,w_2)\r|_{L^2(\Omega; L^2_r(0,T;\H))}&\lesssim
  \frac{1}{\sqrt{r}}\l|\B(z_1,w_1)-\B(z_2,w_2)\r|_{L^2(\Omega; L^2_r(0,T; \cL_2(\U,\H)))}\\
  &\leq\frac{1}{\sqrt{r}}\l|(z_1,w_1)-(z_2,w_2)\r|_{L^2(\Omega; L^2_r(0,T; \H))}\,,
  \end{split}
\]
where the implicit constant is independent of $r$. Consequently, there exists $r$ large enough
such that $\mathcal{S}_\eps$ is a strict contraction on $L^2(\Omega; L^2_r(0,T; \H))$. By Banach 
fixed point theorem, there exists a unique fixed point $(x,y)$ for $\mathcal{S}_\eps$:
this implies that \eqref{prob} has a unique strong
solution.

Finally, the last thing we have to prove is that the solution map
\begin{align*}
  \Lambda_\eps:L^2\left(\Omega,\f_0,\Pi; \H\right)&\rarr L^2\left(\Omega; L^\infty(0,T; \H)\right)\cap 
  L^2\left(\Omega; L^2(0,T;\V_\eps)\right)\,,\\
  \Lambda_\eps:(x_0,y_0)&\mapsto(x,y)\,,
\end{align*}
where $(x,y)$ is the unique solution to \eqref{prob}, is Lipschitz continuous. To this aim,
given $(x_0^i,y_0^i)\in L^2(\Omega,\f_0,\Pi; \H)$ and setting $(x_i,y_i):=\Lambda_\eps(x_0^i,y_0^i)$
for $i=1,2$,
rewriting \eqref{continuous} with the choice $\B_i:=\B(x_i,y_i)$, $i=1,2$, and
using the Lipschitz continuity of $\B$, we have 
\[
  \begin{split}
  &\l|(x_1,y_1)-(x_2,y_2)\r|_{L^2(\Omega; L^\infty_r(0,T;\H))\cap L^2(\Omega; L^2_r(0,T; \V_\eps))}
  +\sqrt{r}\l|(x_1,y_1)-(x_2,y_2)\r|_{L^2(\Omega; L^2_r(0,T; \H))}\\
  &\qquad\lesssim\l|(x_0^1,y_0^1)-(x_0^2,y_0^2)\r|_{L^2(\Omega; \H)}+
  \l|\B(x_1,y_1)-\B(x_2,y_2)\r|_{L^2(\Omega; L^2_r(0,T; \cL_2(\U,\H)))}\\
  &\qquad\lesssim\l|(x_0^1,y_0^1)-(x_0^2,y_0^2)\r|_{L^2(\Omega; \H)}+
  \l|(x_1,y_1)-(x_2,y_2)\r|_{L^2(\Omega; L^2_r(0,T; \H))}
  \end{split}
\]
where again the implicit constant does not depend on $r$. Consequently, the assertion 
follows choosing $r$ large enough and from the fact that $\l|\cdot\r|_{L^p_r(0,T)}$ is equivalent 
to the usual norm of $L^p(0,T)$ for every $r\geq0$.

%%%%%%%%%%%%%%%%%%%%%%%%%%%%%%%%%%%%%%%%%%%%%%%%%%%%

\section{The asymptotic behaviour as $\eps\searrow0$}
\setcounter{equation}{0}
\label{asympt}

This last section is devoted to the proof of the asymptotic result as $\eps\searrow0$ contained in Theorem \ref{teo2}.
Throughout the section, $(x_\eps,y_\eps, \xi_\eps, \xi_{\Gamma,\eps})$ and $(x,y,\xi,\xi_\Gamma)$ are the unique solutions
to \eqref{prob} with additive noise $\B$ in the cases $\eps>0$ and $\eps=0$, respectively.
Moreover, for any $\delta\in(0,1)$, $(x_\eps^\delta,y^\delta_\eps, \xi^\delta_\eps, \xi^\delta_{\Gamma,\eps})$
and $(x^\delta,y^\delta,\xi^\delta,\xi^\delta_\Gamma)$ are the unique solutions
to \eqref{prob} with additive noise $(I+\delta\C_\eps)^{-m}\B$ in the cases $\eps>0$ and $\eps=0$, respectively,
where $m$ is given by Corollary~\ref{cor_Ceps}.
From the previous section, we know that, as $\delta\searrow0$,
\begin{align*}
  (x^\delta_\eps,y^\delta_\eps)\to (x_\eps,y_\eps)\,, \quad 
  (x^\delta,y^\delta) \to (x,y) \qquad&\text{in } 
  L^2(\Omega; L^\infty(0,T; \H))\cap L^2(\Omega; L^2(0,T; \V_\eps))\,,\\
  \xi^\delta_\eps \rarrw \xi_\eps\,,
  \quad \xi^\delta\rarrw\xi\qquad&\text{in } L^1(\Omega\times(0,T)\times D)\,,\\
  \xi_{\Gamma,\eps}^\delta \rarrw\xi_{\Gamma,\eps}
  \quad\xi_{\Gamma}^\delta\rarrw\xi_\Gamma\qquad&\text{in } L^1(\Omega\times(0,T)\times \Gamma)\,.
\end{align*}
Consequently, it is sufficient to prove Theorem~\ref{teo2} for $\B^\delta$, with $\delta\in(0,1)$ fixed.
For this reason, it is not restrictive to assume \eqref{B_strong} here.

By definition of strong solution, we know that 
\beq\label{sol_eps}
\begin{split}
  (x_\eps(t), y_\eps(t)) &+ \int_0^t\C_\eps(x_\eps(s), y_\eps(s))\,ds 
  +\int_0^t \P(x_\eps(s),y_\eps(s))\,ds + \int_0^t(\xi_\eps(s), \xi_{\Gamma,\eps}(s))\,ds\\
  &=(x_0, y_0) + \int_0^t\B(s)\,d\W_s \quad\forall\,t\in[0,T]\,, \quad\Pi\text{-a.s.}\,.
\end{split}
\eeq
Proceeding as in Section \ref{cont_dep} with the choice $r=0$, the following It\^o's formula holds
for every $t\in[0,T]$, $\Pi$-almost surely:
\[
\begin{split}
  &\l|(x_\eps,y_\eps)(t)\r|^2_\H +
  2\int_0^t\l|\nabla x_\eps(s)\r|_H^2\,ds+2\eps\int_0^t\l|\nabla_\Gamma y_\eps(s)\r|^2_{H_\Gamma}\,ds\\
  &+2\int_0^t\left(\P(x_\eps(s),y_\eps(s)),(x_\eps,y_\eps)(s)\right)_\H\,ds+
  2 \int_0^t\int_D\xi_\eps(s) x_\eps(s)\,ds + \int_0^t\int_\Gamma \xi_{\Gamma,\eps}(s)y_\eps(s)\,ds \\
  &=\l|(x_0,y_0)\r|^2_\H
  +\int_0^t\l|\B(s)\r|^2_{\cL_2(\U,\H)}\,ds + 2\int_0^t(x_\eps,y_\eps)(s)\B(s)\,d\W_s\,.
\end{split}
\]
Taking supremum in time and expectations, estimating the stochastic integral as in the proof of 
Lemma \ref{ito} and using the Gronwall lemma together with the 
lipschitzianity of $\P$, we easily deduce that there exists $C>0$, independent of $\eps$, such that
\begin{gather*}
  \l|(x_\eps,y_\eps)\r|_{L^2(\Omega; L^\infty(0,T; \H))}\leq C\,,\\
  \l|\nabla x_\eps\r|_{L^2(\Omega; L^2(0,T; H))}\leq C\,, \qquad
  \eps^{1/2}\l|\nabla_\Gamma y_\eps\r|_{L^2(\Omega; L^2(0,T; H_\Gamma))}\leq C\,,\\
  \l|\xi_\eps x_\eps\r|_{L^1(\Omega\times(0,T)\times D)}\leq C\,, \qquad
  \l|\xi_{\Gamma,\eps}y_\eps\r|_{L^1(\Omega\times(0,T)\times \Gamma)}\leq C\,.
\end{gather*}

Similarly, proceeding as in Lemma~\ref{pathwise} and owing to \eqref{B_strong}, we infer that there
exists $\Omega'\in\f$ with $\Pi(\Omega')=1$ such that, for every $\omega\in\Omega'$, there is $C_\omega>0$
(independent of $\eps\in(0,1)$) such that
\begin{gather*}
  \l|(x_\eps,y_\eps)(\omega)\r|_{L^\infty(0,T; \H)}\leq C_\omega\,,\\
  \l|\nabla x_\eps(\omega)\r|_{L^2(0,T; H)}\leq C_\omega\,, \qquad
  \eps^{1/2}\l|\nabla_\Gamma y_\eps(\omega)\r|_{L^2(0,T; H_\Gamma)}\leq C_\omega\,,\\
  \l|\xi_\eps(\omega) x_\eps(\omega)\r|_{L^1((0,T)\times D)}\leq C_\omega\,, \qquad
  \l|\xi_{\Gamma,\eps}(\omega)y_\eps(\omega)\r|_{L^1((0,T)\times \Gamma)}\leq C_\omega\,.
\end{gather*}
Since $(\xi_\eps,\xi_{\Gamma,\eps})\in\gamma(x_\eps,y_\eps)$ almost everywhere,
we have
\[
j^*(\xi_\eps)\leq j(x_\eps)+j^*(\xi_\eps)=\xi_\eps x_\eps\,, \qquad
j_\Gamma^*(\xi_{\Gamma,\eps})\leq j(y_\eps)+j_\Gamma^*(\xi_{\Gamma,\eps})=\xi_{\Gamma,\eps} y_\eps\,,
\]
so that the families $j^*(\xi_\eps)$ and $j^*_\Gamma(\xi_{\Gamma,\eps})$ are uniformly bounded
in $L^1((0,T)\times D)$ and $L^1((0,T)\times \Gamma)$, respectively.
Furthermore, since
\[
  \frac{d}{dt}\left((x_{\eps},y_{\eps})-\B\cdot\W\right) + 
  \A_{\eps}(x_{\eps},y_{\eps}) + 
  (\xi_{\eps},\xi_{\Gamma,{\eps}})  = 0\,,
\]
if $\mathcal{Z}$ is as in the proof of Proposition \ref{convergence},
by difference we also infer that 
\[
  \l|\frac{d}{dt}\left((x_{\eps},y_{\eps})(\omega)-\B\cdot\W(\omega)\right)\r|_{L^1(0,T; \mathcal{Z}^*)}\leq C_\omega\,.
\]
We deduce that for every $\omega\in\Omega'$ there is a subsequence $\eps'=\eps'(\omega)$ along which 
we have
\begin{align}
  \label{c1'}
  &(x_{\eps'},y_{\eps'})\rarr(\tilde x,\tilde y) &&\text{in } L^2(0,T; \H)\,,\\
  \label{c2'}
  &(x_{\eps'},y_{\eps'})\weakstar(\tilde x,\tilde y) &&\text{in } L^\infty(0,T; \H)\,,\\
  \label{c3'}
  &(x_{\eps'},y_{\eps'})\rarrw(\tilde x,\tilde y) &&\text{in } L^2(0,T; \V_0)\,,\\
  \label{c4'}
  &(\xi_{\eps'},\xi_{\Gamma,{\eps'}})\rarrw (\tilde\xi, \tilde\xi_\Gamma) &&\text{in }
  L^1(0,T; L^1(D)\times L^1(\Gamma))\,.
\end{align}
Moreover, it is also clear that
\beq
  \label{c5'}
  \eps\nabla_\Gamma y_{\eps}\rarr0 \qquad\text{in } L^2\left(\Omega; L^2(0,T; H_\Gamma)\right)\,.
\eeq

At this point, repeating exactly the same argument contained in Section \ref{limit_problem},
we infer that $\eps'$ is independent of $\omega$ and that the limit processes satisfy
\begin{gather*}
  (\tilde x,\tilde y) \in L^2\left(\Omega;L^\infty(0,T;\H)\right)\cap L^2\left(L^2(0,T;\V_0)\right)\,,\\
  (\tilde\xi,\tilde\xi_\Gamma) \in L^1\left(\Omega;L^1(0,T; L^1(D)\times L^1(\Gamma))\right)\,,\\
  j(\tilde x) + j^*(\tilde\xi) \in L^1\left(\Omega\times(0,T)\times D\right)\,, \qquad
  j_\Gamma(\tilde y) + j^*_\Gamma(\tilde\xi_\Gamma) \in L^1\left(\Omega\times(0,T)\times \Gamma\right)\,,\\
  \begin{split}
  (\tilde x(t), \tilde y(t)) &+ \int_0^t\C_0(\tilde x(s), \tilde y(s))\,ds 
  +\int_0^t \P(\tilde x(s), \tilde y(s))\,ds + \int_0^t(\tilde \xi(s), \tilde \xi_\Gamma(s))\,ds\\
  &=(x_0, y_0) + \int_0^t\B(s)\,d\W_s \quad\forall\,t\in[0,T]\,, \quad\Pi\text{-a.s.}\,.
\end{split}
\end{gather*}
Since the problem \eqref{additive} is well-posed for $\eps=0$ (in particular, it admits 
a unique solution), we deduce that 
$(\tilde x, \tilde y)=(x,y)$ and $(\tilde\xi,\tilde\xi_\Gamma)=(\xi,\xi_\Gamma)$.

The last thing that we have to show is that the convergences \eqref{c1}--\eqref{c5} hold.
To this aim, note that \eqref{c1} and \eqref{c5} coincide with \eqref{c1'} and \eqref{c5'}, respectively.
Moreover, \eqref{c2}--\eqref{c4} follow from 
\eqref{c2'}--\eqref{c4'} using the fact that 
$\eps'$ is independent of $\omega$ and Vitali's convergence theorem,
via a similar argument to the one performed 
in Section \ref{limit_problem} when dealing with the regularity in $\omega$.

%%%%%%%%%%%%%%%%%%%%%%%%%%%%%%%%%%%%%%%%%%%%%%%%%%%%

\appendix
\section{An auxiliary result}
\label{A_app}

Let $\eps\geq0$ be fixed. We introduce the operator
\beq
  \label{def_Ceps}
  \mathcal{C}_\eps:\H\rarr\H\,, \quad \mathcal{C}_\eps(u,v):=(-\Delta u, \partial_{\bf n}u - \eps\Delta_\Gamma v)\,,
   \quad (u,v)\in D(\mathcal{C}_\eps)\,,
\eeq
where
\beq
  \label{dom_Ceps}
  D(\mathcal{C}_\eps):=
  \begin{cases}
  \left\{(u,v)\in H^2(D)\times H^{2}(\Gamma): v=\tau u\right\}\quad&\text{if } \eps>0\,,\\
  \left\{(u,v)\in H^{3/2}(D)\times H^{1}(\Gamma): \Delta u\in L^2(D),\; v=\tau u\right\} &\text{if } \eps=0\,.
  \end{cases}
\eeq
It is clear $\C_\eps$ is a well-defined linear operator on $\H$ if $\eps>0$. If $\eps=0$, this is still true since
the conditions $-\Delta u\in H$ and $u\in H^{3/2}(D)$ imply that $\partial_{\bf n}u\in H_\Gamma$ by \cite[Thm.~2.27]{BG87}.
Note that $\C_\eps$ is the strong formulation of the linear component of the operator $\A_\eps$ on $\H$.

\begin{lem}[Maximal monotonicity]
  \label{lm_Ceps}
  The operator $\C_\eps$ is maximal monotone on $\H$ and, consequently, its resolvent 
  $(I+\delta\C_\eps)^{-1}:\H\rarr\H$ is a linear contraction\footnote{Throughout the Appendix,
  by the term "contraction" we mean a 1-Lipschitz continuous function, i.e.~a non expansive operator.}
  for every $\delta>0$.
\end{lem}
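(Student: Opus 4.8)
The plan is to establish the two defining properties of a maximal monotone operator separately: monotonicity, which for the linear operator $\C_\eps$ reduces to $\left(\C_\eps(u,v),(u,v)\right)_\H\geq0$ on $D(\C_\eps)$, and maximality, which by Minty's theorem is equivalent to the range condition $R(I+\C_\eps)=\H$. Once both are secured, the claim that $(I+\delta\C_\eps)^{-1}$ is a well-defined linear contraction on $\H$ for every $\delta>0$ is a standard consequence: linearity is inherited from $\C_\eps$, while testing the difference $(I+\delta\C_\eps)(u_1,v_1)-(I+\delta\C_\eps)(u_2,v_2)$ against $(u_1,v_1)-(u_2,v_2)$ and discarding the nonnegative monotone term yields the $1$-Lipschitz bound through Cauchy--Schwarz.

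For monotonicity I would fix $(u,v)\in D(\C_\eps)$ with $v=\tau u$ and integrate by parts. Green's formula on $D$ gives $\int_D(-\Delta u)u=\int_D|\nabla u|^2-\int_\Gamma(\partial_{\bf n}u)\,v$, while integration by parts on the closed manifold $\Gamma$ gives $-\eps\int_\Gamma(\Delta_\Gamma v)v=\eps\int_\Gamma|\nabla_\Gamma v|^2$. Adding the two contributions, the boundary terms $\pm\int_\Gamma(\partial_{\bf n}u)v$ cancel exactly and one is left with
\[
\left(\C_\eps(u,v),(u,v)\right)_\H=\int_D|\nabla u|^2+\eps\int_\Gamma|\nabla_\Gamma v|^2\geq0 .
\]
For $\eps=0$ these manipulations remain licit: membership $(u,v)\in D(\C_0)$ together with \cite[Thm.~2.27]{BG87} guarantees $\partial_{\bf n}u\in L^2(\Gamma)$, so the generalized Green formula applies with an honest boundary integral.

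For maximality I would solve $(I+\C_\eps)(u,v)=(f,g)$ for arbitrary $(f,g)\in\H$. The first step is variational: the bilinear form $a\big((u,v),(\varphi,\psi)\big):=\left((u,v),(\varphi,\psi)\right)_\H+\left<\C_\eps(u,v),(\varphi,\psi)\right>_{\V_\eps}$ is continuous and, thanks to the equivalent norm on $\V_\eps$ recalled in Section~\ref{results} and the continuity of $\tau:V\rarr H_\Gamma$, coercive on $\V_\eps$; Lax--Milgram then produces a unique weak solution $(u,v)\in\V_\eps$, which already satisfies $v=\tau u$ by definition of $\V_\eps$. Choosing test functions supported in $D$ identifies $-\Delta u+u=f$ in $D$, whence $\Delta u\in L^2(D)$, and the generalized Green formula converts the remaining variational identity into the weak boundary relation $\partial_{\bf n}u+v-\eps\Delta_\Gamma v=g$ on $\Gamma$. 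It then remains to upgrade this weak solution to an element of $D(\C_\eps)$ by elliptic regularity. When $\eps>0$ I would bootstrap: writing the boundary relation as $(I-\eps\Delta_\Gamma)v=g-\partial_{\bf n}u$ with right-hand side initially in $H^{-1/2}(\Gamma)$, the two-derivative gain of the resolvent of $-\Delta_\Gamma$ (recalled in Section~\ref{results}) gives $v\in H^{3/2}(\Gamma)$, hence $u\in H^2(D)$ by Dirichlet regularity for $-\Delta u+u=f$ with datum $\tau u=v$, hence $\partial_{\bf n}u\in H^{1/2}(\Gamma)\subset L^2(\Gamma)$, and a second application yields $v\in H^2(\Gamma)$. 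When $\eps=0$ the boundary relation reads $\partial_{\bf n}u=g-v\in L^2(\Gamma)$, and the $H^{3/2}$-regularity for the Laplacian with $L^2$ Neumann data (again via \cite{BG87}) gives $u\in H^{3/2}(D)$ and $v=\tau u\in H^1(\Gamma)$; in both cases $(u,v)\in D(\C_\eps)$ solves the resolvent equation strongly, so $R(I+\C_\eps)=\H$.

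The delicate point is this last regularity analysis, and in particular the $\eps=0$ case, where one only reaches the fractional space $H^{3/2}(D)$ and must invoke \cite[Thm.~2.27]{BG87} to interpret $\partial_{\bf n}u$ as an $L^2(\Gamma)$ function; the coupling between the bulk normal derivative and the surface Laplacian, which blocks a one-shot regularity statement and forces the alternating bootstrap when $\eps>0$, is the main obstacle to keep under control. With monotonicity and the range condition in place, Minty's theorem delivers maximal monotonicity, and the asserted contraction property of the resolvent follows as explained above.
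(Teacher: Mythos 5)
Your proposal is correct and follows essentially the same route as the paper's proof: monotonicity by integration by parts with cancellation of the boundary terms (which the paper dismisses as immediate), then maximality via Lax--Milgram on $\V_\eps$, identification of the interior equation $u-\Delta u=f$ and the boundary relation $v+\partial_{\bf n}u-\eps\Delta_\Gamma v=g$, and an elliptic-regularity bootstrap based on \cite[Thms.~2.27 and 3.2]{BG87} to conclude $(u,v)\in D(\C_\eps)$. The only cosmetic deviation is that for $\eps>0$ you run the bootstrap boundary-first, starting from $\partial_{\bf n}u\in H^{-1/2}(\Gamma)$ and invoking fractional-order regularity for $(I-\eps\Delta_\Gamma)^{-1}$ (a routine interpolation of the integer-order statement the paper recalls), whereas the paper starts bulk-first from $v=\tau u\in H^1(\Gamma)$ to get $u\in H^{3/2}(D)$; both alternating schemes land at $(u,v)\in H^2(D)\times H^2(\Gamma)$.
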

\begin{proof}
  It is immediate to see that $\C_\eps$ is monotone. Hence, we only have to check that $R(I+\C_\eps)=\H$.
  Let then $(f,g)\in\H$: since $\C_\eps$ is coercive on $\V_\eps$, by the Lax-Milgram lemma there exists a couple 
  $(u,v)\in \V_\eps$ such that 
  \beq
  \label{var}
  \int_Du\varphi + \int_\Gamma v\psi + \int_D\nabla u\cdot\nabla \varphi +
  \eps\int_\Gamma\nabla_\Gamma v\cdot\nabla_\Gamma \psi =
  \int_Df\varphi + \int_\Gamma g\psi \quad\forall\,(\varphi,\psi)\in\V_\eps\,.
  \eeq
  Taking $\varphi\in C^\infty_c(D)$ and $\psi=0$ in the previous variational formulation, one 
  can see that $u$ satisfies in $D$ the following PDE (in the sense of distributions on $D$)
  \beq\label{pde1}
  u-\Delta u = f\,.
  \eeq
  We recover immediately that also $-\Delta u \in H$ by difference, and the previous 
  equation holds in $H$. Moreover, for any $\psi\in C^\infty(\Gamma)$,
  since the trace operator $\tau$ has a right-inverse which is linear and continuous from $H^{k-1/2}(\Gamma)$ to $H^k(D)$
  for every $k\geq1$ (see \cite[Thm.~2.24]{BG87}), thanks to the usual Sobolev embeddings results 
  there is $\varphi \in C^\infty(\overline{D})$ such that $\psi=\tau\varphi$:
  choosing $(\varphi, \psi)$ in \eqref{var}, integrating by parts on $D$ and taking \eqref{pde1} into account,
  we see that $v$ satisfies (in the sense of distributions on the boundary $\Gamma$)
  \beq\label{pde2}
  v + \partial_{\bf n}u -\eps\Delta_\Gamma v = g\,.
  \eeq
  Assume now $\eps>0$. By definition of $\V_\eps$, we know that $v=\tau u\in H^1(\Gamma)$:
  hence, thanks to \cite[Thm.~3.2]{BG87} we also deduce that $u\in H^{3/2}(D)$.
  Using the result contained in \cite[Thm.~2.27]{BG87} and the facts that 
  $u\in H^{3/2}(D)$ and $-\Delta u\in H$,  we deduce that $\partial_{\bf n}u\in H_\Gamma$ as well.
  By comparison in \eqref{pde2}
  we have that $v-\eps\Delta_\Gamma v\in H_\Gamma$, so that 
  $v\in H^2(\Gamma)$ by elliptic regularity on the boundary and \eqref{pde2} holds in $H_\Gamma$.
  Finally, since $\Delta u\in H$ and $v\in H^2(\Gamma)\embed H^{3/2}(\Gamma)$, thanks again to 
  \cite[Thm.~3.2]{BG87} we deduce that $u\in H^2(D)$. Hence, $(u,v)\in D(\C_\eps)$ and this completes the proof in the case $\eps>0$;
  moreover, note that the two results \cite[Thm.~2.27 and 3.2]{BG87} that we have used
  ensure also that 
  \[
  \l|(u,v)\r|_{H^2(D)\times H^2(\Gamma)}\lesssim \l|(f,g)\r|_\H\,.
  \]
  Assume $\eps=0$. By definition of $\V_\eps$, now we have $u\in H^1(D)$ and $v\in H^{1/2}(\Gamma)$.
  Moreover, by comparison in \eqref{pde2} we have $\partial_{\bf n}u \in H_\Gamma$:
  thanks to \cite[Thm.~3.2]{BG87} and the facts that $\Delta u\in H$ and $\partial_{\bf n}u\in H_\Gamma$,
  we deduce that $u\in H^{3/2}(D)$, and consequently also $v\in H^1(\Gamma)$. Hence, $(u,v)\in D(\C_\eps)$ and the proof is complete.
  As before, owing to the results \cite[Thm.~2.27 and 3.2]{BG87} we also have that 
  \[
  \l|(u,v)\r|_{H^{3/2}(D)\times H^1(\Gamma)}\lesssim \l|(f,g)\r|_\H\,.\qedhere
  \]
\end{proof}

\begin{rmk}
  Note that the first part of the proof of the previous lemma ensures that 
  $\C_\eps$ can be extended to a linear operator $\C_\eps:\V_\eps\rarr\V_\eps^*$,
  given by the left-hand side of \eqref{var}, which is still maximal monotone in the sense 
  of Minty--Browder theory (see \cite[Ch.~2]{barbu_monot}).
\end{rmk}

\begin{lem}[Regularity]
  \label{lm_Ceps2}
  Let $\delta>0$ and $k\in\En$. Then,
  \beq
    \label{regular}
    (I+\delta\C_\eps)^{-1}\in\begin{cases}
    \cL\left(H^{k}(D)\times H^k(\Gamma), H^{k+2}(D)\times H^{k+2}(\Gamma)\right) \quad&\text{if } \eps>0\,,\\
    \cL\left(H^{k}(D)\times H^k(\Gamma), H^{k+3/2}(D)\times H^{k+1}(\Gamma)\right) &\text{if } \eps=0\,.
    \end{cases}
  \eeq
\end{lem}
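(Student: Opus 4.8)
The plan is to argue by induction on $k$, using as anchor the case $k=0$ which is already contained in the proof of Lemma~\ref{lm_Ceps}. Recall that $(u,v):=(I+\delta\C_\eps)^{-1}(f,g)$ is, by that lemma, the unique element of $D(\C_\eps)$ solving
\begin{align*}
u-\delta\Delta u &= f \quad\text{in } D\,,\\
v+\delta\partial_{\bf n}u-\delta\eps\Delta_\Gamma v &= g\,, \quad v=\tau u \quad\text{on } \Gamma\,,
\end{align*}
and the concluding estimates of that proof are precisely the statement at level $k=0$: $\l|(u,v)\r|_{H^2(D)\times H^2(\Gamma)}\lesssim\l|(f,g)\r|_\H$ when $\eps>0$, and $\l|(u,v)\r|_{H^{3/2}(D)\times H^1(\Gamma)}\lesssim\l|(f,g)\r|_\H$ when $\eps=0$. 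For the inductive step I would take data of regularity $H^{j+1}(D)\times H^{j+1}(\Gamma)$, apply the induction hypothesis to obtain the level-$j$ regularity, and then bootstrap between the bulk and the boundary by iterating the two theorems of \cite{BG87} already used in Lemma~\ref{lm_Ceps} together with the mapping property of $(I-\delta\eps\Delta_\Gamma)^{-1}$ recalled in Section~\ref{results}.

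For $\eps>0$ the cycle runs as follows. Starting from $u\in H^{j+2}(D)$, the trace theorem gives $\partial_{\bf n}u\in H^{j+1/2}(\Gamma)$; writing the boundary equation as $v=(I-\delta\eps\Delta_\Gamma)^{-1}(g-\delta\partial_{\bf n}u)$ and using that this resolvent gains two derivatives yields $v\in H^{j+5/2}(\Gamma)$; since $\tau u=v$, the Dirichlet-type estimate of \cite[Thm.~3.2]{BG87}, applied with $\Delta u=(u-f)/\delta\in H^{j+1}(D)$, upgrades $u$ to $H^{j+3}(D)$; a final pass through the two boundary steps promotes $v$ to $H^{j+3}(\Gamma)$, which is the claim at level $j+1$. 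For $\eps=0$ the mechanism is the one already used at the base level: the boundary relation reads $\partial_{\bf n}u=(g-v)/\delta$, so, since both $g$ and $v=\tau u$ are controlled at the current level, $\partial_{\bf n}u$ inherits more regularity than the trace theorem alone would give; feeding this Neumann datum together with $\Delta u=(u-f)/\delta$ into \cite[Thm.~3.2]{BG87} in its Neumann form promotes $u$ from $H^{j+3/2}(D)$ to $H^{j+5/2}(D)$, whence $v=\tau u\in H^{j+2}(\Gamma)$, the level-$(j+1)$ statement. Because every regularity result invoked is accompanied by an a~priori bound and the resolvent is linear, composing these estimates yields the operator-norm bound, hence the membership in the spaces $\cL(\cdot,\cdot)$ asserted in \eqref{regular}.

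The main obstacle is exactly this coupling between interior and boundary regularity: neither $u$ nor $v$ can be pushed to the next level by a single standard estimate, since the Dirichlet datum for the bulk equation is the unknown trace $\tau u$ while the forcing of the boundary equation contains the unknown flux $\partial_{\bf n}u$. One must therefore alternate the interior and boundary regularity estimates and check, at each half-step, that the fractional Sobolev indices line up so that the binding constraint is the datum $(f,g)$ and not the coupling; this bookkeeping is what makes the net gain per inductive step equal to $+2$ (resp.\ $+3/2$ and $+1$), and it is the delicate point of the argument, especially in the borderline low-regularity case $\eps=0$, where \cite[Thm.~2.27]{BG87} is needed even to give meaning to $\partial_{\bf n}u$.
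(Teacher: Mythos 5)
Your proposal is correct and follows essentially the same route as the paper: the base case $k=0$ from Lemma~\ref{lm_Ceps}, then an inductive bootstrap alternating the bulk elliptic estimates of \cite[Thms.~2.27, 3.2]{BG87} with elliptic regularity for $\Delta_\Gamma$ on the boundary (the paper writes out only the step $k=1$, in the order bulk--boundary--bulk, and leaves the induction implicit, while you run the cycle boundary--bulk--boundary for general $j$). The only cosmetic difference is that your ordering invokes the Laplace--Beltrami resolvent at a fractional index $H^{j+1/2}(\Gamma)$, whereas the paper's ordering keeps integer indices there; this is harmless since elliptic regularity on the closed manifold $\Gamma$ holds for all Sobolev exponents.
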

\begin{proof}
  It is not restrictive to assume that $\delta=1$.
  In Lemma \ref{lm_Ceps} we have already proved the case $k=0$:
  let us only show the case $k=1$, since for a general $k$ it follows by induction.
  Let $(f,g)\in H^1(D)\times H^1(\Gamma)$ and
  $(u,v)=(I+\C_\eps)^{-1}(f,g) \in D(\C_\eps)$. \\
  If $\eps>0$, by \eqref{pde1} we have
  $u-\Delta u\in H^1(D)$: since by definition of $D(\C_\eps)$ we also know that $v=\tau u\in H^2(\Gamma)$, it follows from
  \cite[Thm.~3.2]{BG87} that $u\in H^{5/2}(D)$. Consequently, $\partial_{\bf n}u\in H^1(\Gamma)$:
  hence, by comparison in \eqref{pde2} we have $v-\eps\Delta_{\Gamma}v\in H^1(\Gamma)$.
  By elliptic regularity on the boundary we deduce that $v\in H^3(\Gamma)$. 
  Combining this information with the fact that $u-\Delta u \in H^1(D)$, again by \cite[Thm.~3.2]{BG87}
  we also have $u\in H^3(D)$.
  Taking into account \cite[Thm.~2.27 and 3.2]{BG87}, it is clear also that 
  \[
  \l|(u,v)\r|_{H^{3}(D)\times H^3(\Gamma)}\lesssim \l|(f,g)\r|_{H^1(D)\times H^1(\Gamma)}\,.
  \]
  If $\eps=0$, since $v\in H^1(\Gamma)$, by difference in \eqref{pde2} we have $\partial_{\bf n}u\in H^1(\Gamma)$;
  this information, together with the fact that $-\Delta u\in H^1(D)$ by difference in \eqref{pde1}, implies that 
  $u\in H^{5/2}(D)$. Consequently, $v=\tau u \in H^2(\Gamma)$. Finally, again by
  \cite[Thm.~2.27 and 3.2]{BG87}, we have that 
  \[
  \l|(u,v)\r|_{H^{5/2}(D)\times H^2(\Gamma)}\lesssim \l|(f,g)\r|_{H^1(D)\times H^1(\Gamma)}\,.\qedhere
  \]
\end{proof}

\begin{lem}[Extension to $L^1$]
  \label{lm_Ceps3}
    Let $\delta>0$. The resolvent $(I+\delta\C_\eps)^{-1}$ can be uniquely extended to a linear contraction
    from $L^1(D)\times L^1(\Gamma)$ to itself. Moreover, one has that 
    $(I+\delta\C_\eps)^{-1}\in\cL(L^1(D)\times L^1(\Gamma), W^{1,q}(D)\times W^{1-1/q,q}(\Gamma))$
    for every $q\in[1,\frac{N}{N-1})$. 
\end{lem}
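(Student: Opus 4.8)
The plan is to establish the two assertions separately, reducing everything to the $L^2$-theory of $\C_\eps$ already obtained in Lemmas~\ref{lm_Ceps} and~\ref{lm_Ceps2} together with a Stampacchia-type boundedness estimate. For data $(f,g)\in\H$ I write $(u,v):=(I+\delta\C_\eps)^{-1}(f,g)$, so that by definition of $\C_\eps$
\[
 u-\delta\Delta u=f \quad\text{in } D\,, \qquad v=\tau u\,, \quad v+\delta\partial_{\bf n}u-\delta\eps\Delta_\Gamma v=g \quad\text{on } \Gamma\,.
\]

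First I would prove the $L^1$-contraction on the dense subspace $\H=L^2(D)\times L^2(\Gamma)\subseteq L^1(D)\times L^1(\Gamma)$. Fix a smooth nondecreasing approximation $\sigma_\eta:\Ar\rarr[-1,1]$ of the sign function, with $\sigma_\eta(0)=0$ and $\sigma_\eta'\geq0$. Testing the bulk equation by $\sigma_\eta(u)$ and the boundary equation by $\sigma_\eta(v)$, integrating by parts (legitimate since $(u,v)\in D(\C_\eps)$, recalling that $\partial_{\bf n}u\in H_\Gamma$ also when $\eps=0$) and adding the two identities, the normal-derivative contributions $\pm\delta\int_\Gamma\partial_{\bf n}u\,\sigma_\eta(v)$ cancel exactly, leaving
\[
 \int_D u\,\sigma_\eta(u)+\int_\Gamma v\,\sigma_\eta(v)+\delta\int_D\sigma_\eta'(u)|\nabla u|^2+\delta\eps\int_\Gamma\sigma_\eta'(v)|\nabla_\Gamma v|^2=\int_D f\,\sigma_\eta(u)+\int_\Gamma g\,\sigma_\eta(v)\,.
\]
Since $\sigma_\eta'\geq0$ the two gradient terms are nonnegative and $|\sigma_\eta|\leq1$; dropping them and letting $\eta\searrow0$ gives $\int_D|u|+\int_\Gamma|v|\leq\int_D|f|+\int_\Gamma|g|$, i.e.\ the contraction $\l|(u,v)\r|_{L^1(D)\times L^1(\Gamma)}\leq\l|(f,g)\r|_{L^1(D)\times L^1(\Gamma)}$. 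As $(I+\delta\C_\eps)^{-1}$ is linear and $1$-Lipschitz for the $L^1$-norm on the dense subspace $\H$, it extends uniquely to a linear contraction of $L^1(D)\times L^1(\Gamma)$.

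For the regularity I would argue by duality on the gradient. The bilinear form defining $\C_\eps$ (the left-hand side of \eqref{var}) is symmetric, hence $(I+\delta\C_\eps)^{-1}$ is self-adjoint on $\H$. Fix $q\in[1,\tfrac{N}{N-1})$, so that $q'>N$, and $\Phi\in C^\infty_c(D;\Ar^N)$; setting $F:=-\operatorname{div}\Phi\in C^\infty_c(D)$ and $(\varphi,\psi):=(I+\delta\C_\eps)^{-1}(F,0)$, self-adjointness yields the duality identity $\int_D uF=\int_D f\varphi+\int_\Gamma g\psi$, which extends to $(f,g)\in L^1(D)\times L^1(\Gamma)$ by the $L^1$-contraction and the boundedness of $(\varphi,\psi)$ (approximating $(f,g)$ by $\H$-data). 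Since $\int_D uF=\int_D u(-\operatorname{div}\Phi)=\int_D\nabla u\cdot\Phi$ (the boundary term vanishing as $\Phi$ has compact support), we obtain
\[
 \left|\int_D\nabla u\cdot\Phi\right|\leq\l|f\r|_{L^1(D)}\l|\varphi\r|_{L^\infty(D)}+\l|g\r|_{L^1(\Gamma)}\l|\psi\r|_{L^\infty(\Gamma)}\,.
\]
Taking the supremum over $\Phi$ with $\l|\Phi\r|_{L^{q'}(D)}\leq1$ and invoking the $L^\infty$-bound discussed below gives $\l|\nabla u\r|_{L^q(D)}\lesssim\l|f\r|_{L^1(D)}+\l|g\r|_{L^1(\Gamma)}$; combined with the $L^1$-contraction and the Sobolev--Poincar\'e inequality this yields the full estimate $\l|u\r|_{W^{1,q}(D)}\lesssim\l|f\r|_{L^1(D)}+\l|g\r|_{L^1(\Gamma)}$, and the boundary regularity $v=\tau u\in W^{1-1/q,q}(\Gamma)$ follows from the trace theorem $\tau\in\cL(W^{1,q}(D),W^{1-1/q,q}(\Gamma))$.

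The main obstacle is precisely this $L^\infty$-bound for the dual solution with divergence-form datum: that $(\varphi,\psi)=(I+\delta\C_\eps)^{-1}(-\operatorname{div}\Phi,0)$ satisfies $\l|\varphi\r|_{L^\infty(D)}+\l|\psi\r|_{L^\infty(\Gamma)}\lesssim\l|\Phi\r|_{L^{q'}(D)}$ for every $q'>N$. This is the point where the classical Stampacchia boundedness result \cite{stamp} must be adapted to the non-standard operator $\C_\eps$, whose dynamic boundary condition couples $\partial_{\bf n}u$ with $-\eps\Delta_\Gamma v$. I would obtain it by a De Giorgi truncation argument carried out simultaneously in the bulk and on the boundary: testing with $(\varphi-k)^+$ and the matching boundary truncation, exploiting the same cancellation of the normal-derivative terms as in the $L^1$-estimate to combine the two contributions into a single coercive energy on $\V_\eps$, and then iterating over the level sets $k$. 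This boundedness estimate is the technical heart of the statement; once it is granted, the duality computation above closes the proof.
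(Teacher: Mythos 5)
Your proposal is correct and follows essentially the same route as the paper: the $L^1$-contraction is obtained by testing with smooth monotone approximations of the sign function (the paper uses $\rho_k(x)=\tanh(kx)$, applied to differences of $\H$-solutions before passing to the limit), and the $W^{1,q}$-regularity comes from duality against the resolvent applied to divergence-form data, which the paper implements by testing the auxiliary problem \eqref{var_aux} with $(u_n,v_n)$ instead of invoking self-adjointness explicitly. The $L^\infty$-bound you isolate as the technical heart is precisely the paper's generalized Stampacchia estimate, proved there exactly as you sketch: truncations $G_k$ tested in the coercive $\V_\eps$-energy (the boundary contributions being nonnegative and simply dropped), Sobolev embedding and H\"older inequalities on the level sets $A_k=\{|z|\geq k\}$ with exponent $\alpha>1$ thanks to $p>N$, and the level-set iteration lemma of \cite[Lem.~4.1]{stamp}.
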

\begin{proof}
 Since $\delta>0$ is fixed throughout the proof, we do not use notation for
 the dependence on $\delta$ of the quantities that we introduce.
 Given $(f,g)\in L^1(D)\times L^1(\Gamma)$,
 let us consider $\{(f_n,g_n)\}_{n\in\En}\subseteq\H$ such that $(f_n,g_n)\rarr(f,g)$
 in $L^1(D)\times L^1(\Gamma)$ as $n\rarr\infty$, and $(u_n,v_n):=(I+\delta\C_\eps)^{-1}(f_n,g_n)$.
 Moreover, let
 $\{\rho_k\}_{k\in\En}$ be a sequence of smooth Lipschitz-continuous increasing functions on $\Ar$ approximating
  pointwise the maximal monotone graph 
  \[
  \text{sign}:\Ar\rarr2^\Ar\,, \quad \text{sign}(x):=
  \begin{cases}
  \frac x{|x|} \quad&\text{if } x\neq0\,,\\
  [-1,1] &\text{if } x=0\,.
  \end{cases}
  \]
  For example, one can take $\rho_k(x)=\tanh(kx)$, $x\in\Ar$. 
  Now, we consider equation \eqref{var} with respect to $n,m\in\En$, take the difference and 
  test by $(\rho_k(u_n-u_m), \rho_k(v_n-v_m))\in\V_\eps$, obtaining
  \[
  \begin{split}
  \int_D(u_n-u_m)&\rho_k(u_n-u_m) + \int_\Gamma (v_n-v_m)\rho_k(v_n-v_m) \\
  &\qquad\quad+ \delta\int_D\rho_k'(u_n-u_m)|\nabla (u_n-u_m)|^2 + 
  \eps\delta\int_\Gamma\rho_k'(v_n-v_m)|\nabla_\Gamma (v_n-v_m)|^2\\
  &=\int_D(f_n-f_m)\rho_k(u_n-u_m)+\int_\Gamma (g_n-g_m)\rho_k(v_n-v_m)\,.
  \end{split}
  \]
  Using monotonicity and the fact that $|\rho_k|\leq1$, letting $k\rarr\infty$, 
  by the dominated convergence theorem it is immediate to see that
  \[
  \int_D|u_n-u_m|+\int_\Gamma|v_n-v_m|\leq \int_D|f_n-f_m|+\int_\Gamma|g_n-g_m|\,:
  \]
  since $(f_n,g_n)\rarr(f,g)$ in $L^1(D)\times L^1(\Gamma)$, this implies that $\{(u_n,v_n)\}_{n\in\En}$ is Cauchy in
   $L^1(D)\times L^1(\Gamma)$.
  Hence, there is $(u,v)\in L^1(D)\times L^1(\Gamma)$ (which is independent of the approximating sequence
   $\{(f_n,g_n)\}_{n\in\En}$) such that 
  \[
  (u_n,v_n)\rarr(u,v) \quad\text{in } L^1(D)\times L^1(\Gamma)\,.
  \]
  This proves that $(I+\delta\C_\eps)^{-1}$ extends uniquely to a linear contraction on $L^1(D)\times L^1(\Gamma)$
  and the first part of the lemma is proved.\\
  Let us focus on the second part.
  First of all, we need to prove an auxiliary result, which is a generalization
  of the classical elliptic regularity theorems by Stampacchia (see \cite{stamp}). Namely,
  for every $p>N$ and $h_0,\ldots,h_N\in L^p(D)$, by the Lax-Milgram lemma there is 
  a weak solution $(z,w)\in\V_\eps$
  such that 
  \beq
  \label{var_aux}
  \int_Dz\varphi+ \int_\Gamma w\psi + \delta\int_D\nabla z\cdot\nabla\varphi + 
  \eps\delta\int_\Gamma\nabla_\Gamma w\cdot\nabla_\Gamma\psi=
  \int_Dh_0\varphi +\sum_{i=1}^N\int_Dh_i\frac{\partial \varphi}{\partial x_i}
  \eeq
  for every $(\varphi,\psi)\in\V_\eps$.
  Let us prove that
  $(z,w)\in L^\infty(D)\times L^\infty(\Gamma)$
  and that there exists $C>0$ such that
  \[
  \l|w\r|_{L^\infty(\Gamma)}\leq\l|z\r|_{L^\infty(D)}\leq C\sum_{i=0}^N\l|h_i\r|_{L^p(D)}\,.
  \]
  For every $k\in\En$, we introduce the Lipschitz function
  \[
  G_k:\Ar\rarr\Ar\,, \qquad G_k(t):=\begin{cases}
  t+k \quad&\text{if } t<-k\,,\\
  0 &\text{if } -k\leq t\leq k\,,\\
  t-k &\text{if } t>k\,.
  \end{cases}
  \]
  Testing \eqref{var_aux} by $(G_k(z), G_k(w))\in\V_\eps$, setting $A_k:=\{|z|\geq k\}\subseteq D$ we get
  \[
  \int_DG_k(z)z+\int_\Gamma G_k(w)w+\delta\int_{A_k}|\nabla G_k(z)|^2 + \eps\delta\int_{|w|\geq k}|\nabla_\Gamma w|^2=
  \int_{A_k}h_0G_k(z)+\sum_{i=0}^N\int_{A_k}h_i \frac{\partial z}{\partial x_i}\,.
  \]
  Using the Young inequality, the fact that $|G_k(z)|\leq|z|$ and the monotonicity of $G_k$, we deduce that 
  \[
  \l|G_k(z)\r|^2_H+
  \frac{\delta}{2}\int_{A_k}|\nabla G_k(z)|^2 \leq 
  \int_{A_k} h_0G_k(z) + \frac12\sum_{i=1}^N\int_{A_k}|h_i|^2\,,
  \]
  which can be rewritten as
  \[
 \l|G_k(z)\r|_{H^1(D)}^2\leq \max\left\{1,\frac2\delta\right\}\int_{A_k}h_0 G_k(z) +
  \max\left\{\frac12,\frac1\delta\right\} \sum_{i=1}^N\int_{A_k}|h_i|^2\,.
  \]
 Let us consider first the case $N\geq3$.
  If we set $2^*:=\frac{2N}{N-2}$ and $2_*:=\frac{2^*}{2^*-1}=\frac{2N}{N+2}$, 
  the Sobolev embedding $H^1(D)\embed L^{2^*}(D)$
  on the left-hand side and the H\"older inequality on the right-hand side yield
  \[
  \left(\int_{A_k}{|G_k(z)|^{2^*}}\right)^{\frac{2}{2^*}}\leq 
  C\left(\int_{A_k}|h_0|^{2_*}\right)^\frac{1}{2_*}\left(\int_{A_k}|G_k(z)|^{2^*}\right)^\frac{1}{2^*}+ C\sum_{i=1}^N\int_{A_k}|h_i|^2
  \]
  for a positive constant $C$, from which, thanks to the Young inequality we have
   \[
  \left(\int_{A_k}{|G_k(z)|^{2^*}}\right)^{\frac{2}{2^*}}\leq 
  C\left(\int_{A_k}|h_0|^{2_*}\right)^\frac{2}{2_*}+ 2C\sum_{i=1}^N\int_{A_k}|h_i|^2\,.
  \]
  Using the fact that $h_i\in L^p(D)$ for $i=0,\ldots,N$, that $p>2_*$ (since $p>N$) 
  we deduce
  \[
  \left(\int_{A_k}{|G_k(z)|^{2^*}}\right)^{\frac{2}{2^*}}\leq
  2C\left[\l|h_0\r|^2_{L^p(D)}|A_k|^{\frac{2}{2_*}-\frac2p}
  +|A_k|^{1-\frac2p}\sum_{i=1}^N\l|h_i\r|^2_{L^p(D)}\right]\,.
  \]
  Now, for every $h>k$ we have $A_h\subseteq A_k$ and $G_k(u)\geq h-k$ on $A_h$ so that 
  \[
  (h-k)^{2}|A_h|^{\frac{2}{2^*}}\leq 2C\sum_{i=0}^N\l|h_i\r|^2_{L^p(D)}\left(|A_k|^{\frac{2}{2_*}-\frac2p}+|A_k|^{1-\frac2p}\right)\,.
  \]
  Renominating the constant $C$, since it is not restrictive to assume that $|A_k|<1$, it follows
  \[
  |A_k|\leq C\left(\sum_{i=0}^N\l|h_i\r|_{L^p}^2\right)^\frac{2^*}{2}\frac{|A_k|^\alpha}{(h-k)^{2^*}}\,, \qquad
  \alpha:=\frac{2^*}{2}\min\left\{\frac{2}{2_*}-\frac{2}{p}, 1-\frac{2}{p}\right\}\,.
  \]
  Now, using the fact that $p>N$, it is a standard matter to see that
  \[
  \alpha=\frac{2^*}{2}\min\left\{\frac{N+2}{N}-\frac{2}{p}, 1-\frac{2}{p}\right\}=\frac{N}{N-2}\left(1-\frac{2}{p}\right)=
  \frac{1-2/p}{1-2/N}>1\,.
  \]
 If $N=2$, then we know that $H^1(D)\embed L^r(D)$ for all $r\in[1,+\infty)$: using this fact,
  we repeat the same argument replacing $2_*$ and $2^*$
  by an arbitrary $q\in(1,2)$ and its conjugate exponent $q'=\frac{q}{q-1}$, respectively.
  With such a choice, the same computations yield
  \[
  \alpha=\frac{q'}{2}\min\left\{\frac{2}{q}-\frac2p, 1-\frac2p\right\} = 
  \frac{q}{2(q-1)}\left(1-\frac2p\right)=\frac{q(p-2)}{2p(q-1)}\,.
  \]
  It is easily seen that $\alpha>1$ if and only if $q<\frac{2p}{p+2}$:
  since the fact that $p>2$ implies that $\frac{2p}{p+2}\in(1,2)$,
  we can choose $q\in(1,\frac{2p}{p+2})$, getting
  $\alpha>1$ also in the case $N=2$, as desired.
  By \cite[Lem.~4.1]{stamp}, we can conclude that $z\in L^\infty(D)$ and $\l|z\r|_{L^\infty(D)}\leq C\sum_{i=0}^N\l|h_i\r|_{L^p(D)}$,
  suitably renominating the positive constant $C$. Moreover, since $w=\tau z$, we also have that $w\in L^\infty(\Gamma)$ and
  $\l|w\r|_{L^\infty(\Gamma)}\leq\l|z\r|_{L^\infty(D)}$.\\
  We are now ready to complete the proof of the lemma. Testing \eqref{var_aux} by $(u_n,v_n)$,
  recalling the definition of $(u_n,v_n)$ we have
  \[
  \int_Dh_0u_n+\sum_{i=1}^N\int_Dh_i\frac{\partial u_n}{\partial x_i} =
  \int_Df_nz+\int_\Gamma g_nw
  \leq C\sum_{i=0}^N\l|h_i\r|_{L^p(D)}
  \left(\l|(f_n\r|_{L^1(D)}+\l|g_n\r|_{L^1(\Gamma)}\right)\,;
  \]
  taking into account that  $h_0,\ldots,h_N\in L^p(D)$ are arbitrary, we deduce that
  \[
  \l|\left(u_n,  \frac{\partial u_n}{\partial x_1}, \ldots, \frac{\partial u_n}{\partial x_N}\right)\r|_{L^q(D)^{N+1}}\leq
  C\l|(f_n,g_n)\r|_{L^1(D)\times L^1(\Gamma)}\,,
  \]
  where $q:=\frac{p}{p-1}$ is the conjugate exponent of $p$. Since
  $(f_n,g_n)\rarr(f,g)$ in $L^1(D)\times L^1(\Gamma)$, recalling that the operator $\C_\eps$ is linear and $p>N$, we have that
  $u_n\rarr u$ in $W^{1,q}(D)$ for every $q\in[1,\frac{N}{N-1})$, and consequently 
  $v_n\rarr v$ in $W^{1-1/q,q}(\Gamma)$. This ensures that $v=\tau u$;
  moreover,  letting $n\rarr\infty$ we have
  $\l|(u,v)\r|_{W^{1,q}(D)\times W^{1-1/q,q}(\Gamma)}\leq C\l|(f,g)\r|_{L^1(D)\times L^1(\Gamma)}$, 
  from which the thesis follows.
\end{proof}

\begin{lem}[Extension to $L^q$, $q>1$]
  \label{lm_Ceps4}
    Let $\delta>0$ and $q\in[1,\frac{N}{N-1})$. Then the resolvent $(I+\delta\C_\eps)^{-1}$ can be uniquely extended to a linear contraction
    from $L^q(D)\times L^q(\Gamma)$ to itself. Moreover, for every $k\in\En$, one has that
    \[
    (I+\delta\C_\eps)^{-1}\in
    \begin{cases}
    \cL(W^{k,q}(D)\times W^{k,q}(\Gamma), W^{k+2,q}(D)\times W^{k+2-1/q,q}(\Gamma)) \quad&\text{if } \eps>0\,,\\
    \cL(W^{k,q}(D)\times W^{k,q}(\Gamma), W^{k+1,q}(D)\times W^{k+1-1/q,q}(\Gamma)) \quad&\text{if } \eps=0\,.
    \end{cases}
    \] 
\end{lem}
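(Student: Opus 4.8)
The plan is to separate the two assertions: first the contraction property on $L^q(D)\times L^q(\Gamma)$, obtained by interpolation, and then the regularity estimates, obtained by an elliptic bootstrap.

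For the contraction I would interpolate between the two endpoints already at my disposal. By Lemma~\ref{lm_Ceps3} the resolvent $(I+\delta\C_\eps)^{-1}$ is a linear contraction on $L^1(D)\times L^1(\Gamma)$, and by Lemma~\ref{lm_Ceps} it is a linear contraction on $L^2(D)\times L^2(\Gamma)=\H$; since $D$ is bounded and $\Gamma$ compact one has $L^2\embed L^1$, so the two extensions agree on the dense set $\H$ and define a single linear map. Viewing a pair $(f,g)$ as one function on the disjoint union $D\sqcup\Gamma$ carrying Lebesgue measure on $D$ and surface measure on $\Gamma$, so that $L^p(D)\times L^p(\Gamma)=L^p(D\sqcup\Gamma)$, I would apply the Riesz--Thorin theorem to the linear operator $(I+\delta\C_\eps)^{-1}$. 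Because $N\geq2$ gives $\frac{N}{N-1}\leq2$, the whole range $q\in[1,\frac{N}{N-1})$ is contained in $[1,2]$, and interpolating the two endpoint contractions yields that $(I+\delta\C_\eps)^{-1}$ is a contraction on $L^q(D)\times L^q(\Gamma)$.

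For the regularity I would argue by induction on $k$, transcribing into the $L^q$ scale the coupled elliptic bootstrap already used in the proofs of Lemmas~\ref{lm_Ceps} and~\ref{lm_Ceps2}. Writing $(u,v)=(I+\delta\C_\eps)^{-1}(f,g)$, the pair solves
\[
  u-\delta\Delta u=f \;\text{ in } D\,,\qquad v=\tau u\,,\qquad v+\delta\partial_{\bf n}u-\eps\delta\Delta_\Gamma v=g \;\text{ in }\Gamma\,.
\]
The base case $\eps=0$, $k=0$ is already contained in Lemma~\ref{lm_Ceps3}, since $L^q(D)\times L^q(\Gamma)\embed L^1(D)\times L^1(\Gamma)$ on the bounded sets $D$ and $\Gamma$. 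In the remaining cases I would run the loop: inserting $(f,g)\in W^{k,q}(D)\times W^{k,q}(\Gamma)$ into the interior equation, $L^q$ elliptic regularity for $-\Delta$ with the available Dirichlet trace gives a gain of derivatives for $u$; this improves the normal trace $\partial_{\bf n}u$; feeding it into the boundary equation and using (when $\eps>0$) elliptic regularity for $v-\eps\delta\Delta_\Gamma v$ on $\Gamma$ upgrades $v$; and a final interior step with the improved trace closes the bootstrap, reproducing the $W^{k+2,q}(D)\times W^{k+2-1/q,q}(\Gamma)$ and $W^{k+1,q}(D)\times W^{k+1-1/q,q}(\Gamma)$ targets of the cases $\eps>0$ and $\eps=0$, respectively. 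The external inputs are the $L^q$ (Agmon--Douglis--Nirenberg, Calder\'on--Zygmund) analogues of \cite[Thm.~2.27 and 3.2]{BG87}, together with the $W^{s,q}$ trace theory.

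The main obstacle is exactly this $L^q$ elliptic regularity for the coupled bulk--boundary problem: for $q\neq2$ the energy and Lax--Milgram arguments of the previous lemmas are no longer available, so one must invoke the $L^q$ theory and keep careful track of the fractional trace spaces $W^{s-1/q,q}(\Gamma)$ and of the coupling through $\partial_{\bf n}u$ and $\Delta_\Gamma v$, distinguishing the case $\eps>0$, where two full derivatives are gained on the boundary, from $\eps=0$, where only the trace regularity of the bulk solution is at hand. Once these regularity results are granted, the induction is routine.
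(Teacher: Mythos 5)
Your proposal is correct, and it splits naturally into a half that coincides with the paper's argument and a half that is genuinely different. The regularity statement is proved in the paper exactly as you outline: only $k=0$ is treated (the induction being routine), and the bootstrap is interior equation $\rightarrow$ normal trace via \cite[Thm.~2.27]{BG87} $\rightarrow$ boundary equation, with elliptic regularity for $v-\eps\delta\Delta_\Gamma v$ on $\Gamma$ when $\eps>0$ $\rightarrow$ final interior step via \cite[Thm.~3.2]{BG87}; the results of \cite{BG87} invoked there are already the $W^{s,q}$ statements you call the Agmon--Douglis--Nirenberg analogues, so no further input is needed, and your observation that the case $\eps=0$, $k=0$ is already contained in Lemma~\ref{lm_Ceps3} (via $L^q\embed L^1$) is accurate --- the paper even gets the slightly better $u\in W^{1+1/2q,q}(D)$ before discarding the gain. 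Where you diverge is the contraction: the paper does \emph{not} interpolate, but repeats the testing argument of Lemma~\ref{lm_Ceps3} with $\rho_k$ now a smooth increasing Lipschitz approximation of the $L^q$-duality map $t\mapsto|t|^{q-2}t$ (with $\rho_k(0)=0$ and $\rho_k(t)=|t|^{q-2}t$ for $|t|\geq 1/k$), which yields the $L^q$ contraction directly and stays entirely within real-variable, self-contained estimates. Your Riesz--Thorin route between the $L^1$ endpoint of Lemma~\ref{lm_Ceps3} and the $L^2$ endpoint of Lemma~\ref{lm_Ceps} is also valid: the two operators agree on $\H$, the identification $L^p(D)\times L^p(\Gamma)=L^p(D\sqcup\Gamma)$ is consistent with the product norms used in those lemmas, and $[1,\tfrac{N}{N-1})\subseteq[1,2]$ since $N\geq2$; indeed it buys the contraction on all of $[1,2]$ at once (the restriction $q<\tfrac{N}{N-1}$ is only needed for the regularity half, where it is inherited from Lemma~\ref{lm_Ceps3}). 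The one technical point you should acknowledge is that Riesz--Thorin is a theorem about complex $L^p$ spaces, so you must either complexify --- harmless here, since the complexification of a real operator has the same $L^p\rightarrow L^p$ norm, by the rotation identity $\int_0^{2\pi}|a\cos\theta+b\sin\theta|^p\,d\theta=c_p|a+ib|^p$ --- or cite a real-scalar version; the paper's testing argument avoids this issue and has the further advantage of working for every $q\in[1,\infty)$, not only $q\leq2$.
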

\begin{proof}
  The fact that $(I+\delta\C_\eps)^{-1}$ can be extended to a contraction on $L^q(D)\times L^q(\Gamma)$
  can be showed in exactly the same way as in the proof of Lemma \ref{lm_Ceps3}: the only difference is the choice of 
  $\{\rho_k\}_{k\in\En}$. Here, one should take $\rho_k(t):\Ar\rarr\Ar$ smooth, increasing, Lipschitz continuous
  such that $\rho_k(0)=0$ and $\rho_k(t)=|t|^{q-2}t$ if $|t|\geq\frac1k$\,.\\
  Let us focus on the regularity result. We only show the case $k=0$, since
  one can easily generalize by induction to any $k\in\En$.
  Let then $(f,g)\in L^{q}(D)\times L^{q}(\Gamma)$ and let us consider $(u,v):=(I+\delta\C_\eps)^{-1}(f,g)$.
  By Lemma \ref{lm_Ceps3} we have that $u\in W^{1,q}(D)$, $v\in W^{1-1/q,q}(\Gamma)$, $v=\tau u$ and
  $u-\delta\Delta u =f$ in the sense of distributions on $D$. Hence, owing to \cite[Thm.~2.27]{BG87}
  we deduce that $\partial_{\bf n}u\in W^{-1/q,q}(\Gamma)$, so that we can write $v+\partial_{\bf n} u-\delta\Delta_\Gamma v=g$
  in the sense of distributions on $\Gamma$.
  If $\eps>0$, by elliptic regularity on the boundary we deduce that $v\in W^{2-1/q,q}(\Gamma)$: consequently, thanks to 
  \cite[Thm.~3.2]{BG87}, we infer that $u\in W^{2,q}(D)$.
  If $\eps=0$, we have by difference that $\partial_{\bf n}u\in L^q(\Gamma)\embed W^{-1/2q,q}(\Gamma)$:
  hence, the result \cite[Thm.~3.2]{BG87}
  ensures that $u\in W^{1+1/2q,q}(D)\embed W^{1,q}(D)$, and consequently $v\in W^{1-1/q,q}(\Gamma)$.
\end{proof}

\begin{cor}[Ultracontractivity]
\label{cor_Ceps}
  There exists $m\in\En$ such that, for every $\delta>0$,
  \[
  (I+\delta\C_\eps)^{-m}\in\cL\left(L^1(D)\times L^1(\Gamma), L^\infty(D)\times L^\infty(\Gamma)\right)\,.
  \]
\end{cor}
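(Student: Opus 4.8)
The plan is to write $(I+\delta\C_\eps)^{-m}=\big[(I+\delta\C_\eps)^{-1}\big]^m$ and to split the gain of integrability into two regimes, proving separately that a finite power of the resolvent $R:=(I+\delta\C_\eps)^{-1}$ maps $\H=L^2(D)\times L^2(\Gamma)$ into $L^\infty(D)\times L^\infty(\Gamma)$, and that a finite power maps $L^1(D)\times L^1(\Gamma)$ into $\H$. Composing the two yields the claim, with $m$ the sum of the two exponents.

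For the first regime I would simply iterate the $L^2$-regularity of Lemma~\ref{lm_Ceps2}. Starting from $(f,g)\in\H$ one has $R^{j}(f,g)\in H^{2j}(D)\times H^{2j}(\Gamma)$ when $\eps>0$, while when $\eps=0$ each application still raises the Sobolev order by a fixed positive amount on both $D$ and $\Gamma$ (after embedding the output into a space of common order). Since the order grows linearly in $j$ on both the $N$-dimensional bulk and the $(N-1)$-dimensional boundary, Morrey's embedding gives $R^{m_0}\in\cL(\H,\,L^\infty(D)\times L^\infty(\Gamma))$ as soon as the bulk order exceeds $N/2$ and the boundary order exceeds $(N-1)/2$; this fixes $m_0=m_0(N,\eps)$, with a weaker per-step gain when $\eps=0$ simply forcing a larger $m_0$.

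For the second regime the cleanest route is duality. The form associated with $\C_\eps$, namely $\int_D\nabla u\cdot\nabla\varphi+\eps\int_\Gamma\nabla_\Gamma v\cdot\nabla_\Gamma\psi$, is symmetric, so the maximal monotone operator of Lemma~\ref{lm_Ceps} is self-adjoint on $\H$ and $R=R^*$. Taking the Banach-space adjoint of the bounded map $R^{m_0}\colon\H\rarr L^\infty(D)\times L^\infty(\Gamma)$ and restricting it to $L^1(D)\times L^1(\Gamma)\embed(L^\infty(D)\times L^\infty(\Gamma))^*$, self-adjointness identifies this adjoint with $R^{m_0}$ itself; by the $L^1$-extension of Lemma~\ref{lm_Ceps3} the two coincide on the dense set $\H\cap(L^1\times L^1)$, whence $R^{m_0}\in\cL(L^1(D)\times L^1(\Gamma),\,\H)$. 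Choosing $m:=2m_0$ and composing, $R^{m}=R^{m_0}\circ R^{m_0}$ carries $L^1\times L^1$ into $\H$ and then into $L^\infty\times L^\infty$, which is exactly the assertion; note that $m$ depends only on $N$ and $\eps$, not on $\delta$, while the operator norm may depend on $\delta$.

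The main obstacle is precisely the passage $L^1\times L^1\rarr\H$. A purely direct bootstrap — iterating the $L^1$-gain of Lemma~\ref{lm_Ceps3} ($L^1\rarr W^{1,q}$, $q<\tfrac{N}{N-1}$) and the low-exponent gain of Lemma~\ref{lm_Ceps4} together with the Sobolev embeddings $W^{1,q}(D)\embed L^{Nq/(N-q)}(D)$ and the corresponding trace embeddings on $\Gamma$ — does climb the integrability scale, but because those lemmas require the source exponent to stay below $\tfrac{N}{N-1}$, it stalls below $L^2$ for large $N$ and never reaches $\H$. The self-adjoint duality argument above sidesteps this entirely; alternatively one may fill the range $(\tfrac{N}{N-1},2)$ by interpolating the smoothing estimates of Lemmas~\ref{lm_Ceps3} and~\ref{lm_Ceps2}. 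A minor bookkeeping point throughout is that the trace relation $v=\tau u$ couples the bulk and boundary integrabilities, so it suffices to drive the bulk exponent to infinity and read off the boundary from the trace embeddings.
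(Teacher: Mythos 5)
Your argument is correct, but it takes a genuinely different route from the paper's. The paper's proof of Corollary~\ref{cor_Ceps} is a one-liner: iterate Lemma~\ref{lm_Ceps3} and then Lemma~\ref{lm_Ceps4} at a \emph{fixed} exponent $q<\frac{N}{N-1}$, so that it is the differentiability order, not the integrability, that climbs --- for $\eps>0$ the output $W^{k+2,q}(D)\times W^{k+2-1/q,q}(\Gamma)$ embeds into the admissible input $W^{k+1,q}(D)\times W^{k+1,q}(\Gamma)$ at the next level --- and after roughly $N$ steps Morrey's embedding ($W^{k,q}(D)\embed L^\infty(D)$ and $W^{k-1/q,q}(\Gamma)\embed L^\infty(\Gamma)$ once $kq>N$) concludes. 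So the obstruction you invoke to motivate duality, namely that the cap $q<\frac{N}{N-1}$ makes any direct bootstrap ``stall below $L^2$'', mischaracterizes that route: $L^2$ is never an intermediate target there. Your route instead iterates the $L^2$-theory of Lemma~\ref{lm_Ceps2} plus Morrey to get $R^{m_0}\in\cL(\H, L^\infty(D)\times L^\infty(\Gamma))$, and then exploits the self-adjointness of $R=(I+\delta\C_\eps)^{-1}$ on $\H$ --- which indeed holds, since testing \eqref{var} for the datum $(f_1,g_1)$ against the solution for $(f_2,g_2)$ shows the resolvent is symmetric --- to obtain $\l|R^{m_0}(f,g)\r|_\H\lesssim\l|(f,g)\r|_{L^1(D)\times L^1(\Gamma)}$ first on $\H$ and then on all of $L^1(D)\times L^1(\Gamma)$ by density and consistency with the $L^1$-extension of Lemma~\ref{lm_Ceps3}; composing gives $m=2m_0$. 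This is the classical duality trick for symmetric resolvents, and your execution of it (adjoint of $R^{m_0}$ restricted to $L^1\times L^1$, identification on the dense set $\H$) is sound. As for what each approach buys: the paper's argument never uses the symmetry of $\C_\eps$, so it would survive non-self-adjoint perturbations, and it stays entirely within the Appendix's elliptic lemmas; your argument needs symmetry, but requires only the $L^2$-regularity and the $L^1$-contraction, and it is more robust precisely where the paper's sketch is delicate, namely $\eps=0$: there Lemma~\ref{lm_Ceps4} returns boundary regularity $W^{k+1-1/q,q}(\Gamma)$, which falls short of the $W^{k+1,q}(\Gamma)$ required as input at the next integer level, so the iteration of the stated lemmas does not obviously climb the ladder, whereas your duality argument (or the interpolation of Lemmas~\ref{lm_Ceps3} and~\ref{lm_Ceps2} you mention as an alternative) bypasses this boundary bookkeeping altogether.
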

\begin{proof}
  It easily follows from Lemmas \ref{lm_Ceps3}--\ref{lm_Ceps4} and the Sobolev embeddings theorems.
\end{proof}

\begin{lem}[Asymptotics as $\delta\searrow0$]
  \label{lm_Ceps5}
  Let $\eps\geq0$ and $(u_\delta,v_\delta):=(I+\delta\C_\eps)^{-1}(f,g)$ for any $(f,g)$ for which it makes sense. 
  Then, as $\delta\searrow0$, we have
  \begin{align*}
  (u_\delta, v_\delta)\rarr(f,g) \quad&\text{in } L^1(D)\times L^1(\Gamma) &&\text{if}\qquad (f,g)\in L^1(D)\times L^1(\Gamma)\,,\\
  (u_\delta, v_\delta)\rarr(f,g) \quad&\text{in } \H &&\text{if}\qquad (f,g)\in \H\,,\\
  (u_\delta, v_\delta)\rarr(f,g) \quad&\text{in } \V_\eps &&\text{if}\qquad (f,g)\in \V_\eps\,.
  \end{align*}
\end{lem}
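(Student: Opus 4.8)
The plan is to establish the three convergences separately, exploiting in each case the contraction property of the resolvent proved above. For the statements in $L^1(D)\times L^1(\Gamma)$ and in $\H$ I would argue by density. The smooth pairs $\{(u,\tau u):u\in C^\infty(\overline D)\}$ lie in $D(\C_\eps)$ and are dense both in $L^1(D)\times L^1(\Gamma)$ and in $\H$: one approximates the prescribed trace $g$ by $\tau\varphi$ with $\varphi\in C^\infty(\overline D)$, and then corrects the interior datum by a function in $C^\infty_c(D)$, which leaves the trace untouched. For $(f,g)\in D(\C_\eps)$, setting $(h_1,h_2):=\C_\eps(f,g)\in\H\embed L^1(D)\times L^1(\Gamma)$, one has $(I+\delta\C_\eps)^{-1}\big((f,g)+\delta(h_1,h_2)\big)=(f,g)$, so that by the contraction property (Lemma~\ref{lm_Ceps} in $\H$, Lemma~\ref{lm_Ceps3} in $L^1(D)\times L^1(\Gamma)$)
\[
  \l|(u_\delta,v_\delta)-(f,g)\r|\leq\delta\l|(h_1,h_2)\r|\to0\,.
\]
A routine approximation estimate, again using that the resolvent is $1$-Lipschitz, then extends the convergence to every $(f,g)$ in the respective space.

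The substantive part is the convergence in $\V_\eps$, and this is where I expect the \emph{main difficulty}. I would rely on the variational characterisation: by \eqref{var} (written for general $\delta>0$), $(u_\delta,v_\delta)$ solves the Euler--Lagrange equation of the strictly convex, coercive functional
\[
  \Phi_\delta(u,v):=\tfrac12\l|(u,v)-(f,g)\r|_\H^2+\tfrac\delta2\, a\big((u,v),(u,v)\big)\,,\qquad
  a\big((u,v),(\varphi,\psi)\big):=\int_D\nabla u\cdot\nabla\varphi+\eps\int_\Gamma\nabla_\Gamma v\cdot\nabla_\Gamma\psi\,,
\]
hence is its unique minimiser over $\V_\eps$. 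Comparing $\Phi_\delta(u_\delta,v_\delta)$ with $\Phi_\delta(f,g)=\tfrac\delta2\,a((f,g),(f,g))$, which is finite since $(f,g)\in\V_\eps$, yields simultaneously the bound $\l|(u_\delta,v_\delta)-(f,g)\r|_\H^2\leq\delta\,a((f,g),(f,g))\to0$ and the energy estimate $a((u_\delta,v_\delta),(u_\delta,v_\delta))\leq a((f,g),(f,g))$. The latter, together with the $\H$-contraction, shows that $(u_\delta,v_\delta)$ is bounded in $\V_\eps$; since $\V_\eps\embed\H$ and we already have strong $\H$-convergence to $(f,g)$, every weak limit point in $\V_\eps$ is forced to equal $(f,g)$, so $(u_\delta,v_\delta)\rarrw(f,g)$ in $\V_\eps$ along the whole family.

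It remains to upgrade weak to strong convergence by a norm argument. Writing $w_\delta:=(u_\delta,v_\delta)-(f,g)$ and expanding the bilinear form,
\[
  a(w_\delta,w_\delta)=a\big((u_\delta,v_\delta),(u_\delta,v_\delta)\big)-2\,a\big((u_\delta,v_\delta),(f,g)\big)+a\big((f,g),(f,g)\big)\,.
\]
The weak $\V_\eps$-convergence gives $a((u_\delta,v_\delta),(f,g))\to a((f,g),(f,g))$, while the energy estimate gives $\limsup_{\delta\searrow0}a((u_\delta,v_\delta),(u_\delta,v_\delta))\leq a((f,g),(f,g))$; since $a(w_\delta,w_\delta)\geq0$ it follows that $a(w_\delta,w_\delta)\to0$. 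Because $\l|(u,v)\r|_\H^2+a((u,v),(u,v))$ is (equivalent to) the square of the $\V_\eps$-norm, combining $a(w_\delta,w_\delta)\to0$ with the strong $\H$-convergence yields $\l|w_\delta\r|_{\V_\eps}\to0$, which is the claim.

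The two points requiring care are precisely the identification of the weak limit in $\V_\eps$ and the passage $\eps=0$. In the latter case $a$ controls only $\nabla u_\delta$, so the norm argument directly gives $u_\delta\to f$ in $H^1(D)$; the convergence of the boundary component is then recovered through $v_\delta=\tau u_\delta$ and the continuity of the trace $\tau:H^1(D)\rarr H^{1/2}(\Gamma)$, which is exactly the mechanism behind the equivalent norm on $\V_\eps$ recalled above.
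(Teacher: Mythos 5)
Your proposal is correct, but on two of the three claims it takes a genuinely different route from the paper, and the comparison is instructive. For the $\H$- and $L^1$-statements the paper first treats $(f,g)\in\H$ by testing \eqref{var} with $(u_\delta,v_\delta)$ itself, which gives the bound \eqref{lim1}, a weak $\H$-limit identified with $(f,g)$ by density, and then strong convergence via $\limsup_{\delta\searrow0}\l|(u_\delta,v_\delta)\r|_\H\leq\l|(f,g)\r|_\H$; the $L^1$-case is deduced afterwards from the $\H$-case by a triangle inequality using the $L^1$-contraction of Lemma~\ref{lm_Ceps3}. You instead run the classical accretive-resolvent argument: the identity $(I+\delta\C_\eps)^{-1}\big((f,g)+\delta\,\C_\eps(f,g)\big)=(f,g)$ on $D(\C_\eps)$ gives a rate $O(\delta)$ there, and the uniform contraction transports convergence to the closure; this treats $\H$ and $L^1$ in one stroke and yields quantitative rates on the domain, at the (mild) price of the density claim for pairs $(\varphi+\psi,\tau\varphi)$ with $\varphi\in C^\infty(\overline D)$, $\psi\in C^\infty_c(D)$, which you justify correctly. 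For the $\V_\eps$-statement the paper takes the $\H$-scalar product of $(u_\delta,v_\delta)+\delta\C_\eps(u_\delta,v_\delta)=(f,g)$ with $\C_\eps(u_\delta,v_\delta)$ and integrates by parts --- this relies on the regularity of Lemma~\ref{lm_Ceps}, i.e.\ that the resolvent maps $\H$ into $D(\C_\eps)$ --- obtaining $|(u_\delta,v_\delta)|_{\V_\eps}\leq|(f,g)|_{\V_\eps}$, and then concludes, as you do, by weak convergence plus a norm argument. Your Dirichlet-principle derivation reaches the same energy inequality $a\big((u_\delta,v_\delta),(u_\delta,v_\delta)\big)\leq a\big((f,g),(f,g)\big)$ (in your notation), plus, as a bonus, the rate $\l|(u_\delta,v_\delta)-(f,g)\r|^2_\H\leq\delta\,a\big((f,g),(f,g)\big)$, purely at the weak/variational level: no strong form of the operator, no elliptic regularity, no integration by parts. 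Your handling of $\eps=0$ through the trace and the equivalent norm on $\V_0$ is also consistent with the paper's setting. In short, both proofs share the same skeleton (energy bound, identification of the weak limit, Hilbert-space upgrade from weak to strong convergence); yours obtains the energy bound from minimality, which makes it more elementary and self-contained, while the paper's stays within the operator-theoretic framework already built in the Appendix and reuses its regularity results.
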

\begin{proof}
  We start with the case $(f,g)\in\H$: testing \eqref{var} by $(u_\delta, v_\delta)$ and
  using the Young inequality we easily deduce that
  \beq
  \label{lim1}
  \frac12\l|u_\delta\r|_H^2 + \frac12\l|v_\delta\r|_{H_\Gamma}^2 + \delta\l|\nabla u_\delta\r|_H^2 + \eps\delta\l|\nabla_\Gamma v_\delta\r|_{H_\Gamma}^2
  \leq \frac12\l|f\r|_H^2 + \frac12\l|g\r|_{H_\Gamma}^2\,.
  \eeq
  It follows (for a subsequence, which we still denote by $\delta$) that
  \[
  (u_\delta, v_\delta)\rarrw(u,v) \quad\text{in } \H\,, \qquad
  \delta(u_\delta, v_\delta)\rarr0 \quad\text{in } \V_\eps\,,
  \]
  where by a standard density argument $(u,v)=(f,g)$.
  Moreover, we also have that 
  \[
  \limsup_{\delta\searrow0}\l|(u_\delta, v_\delta)\r|_\H\leq\l|(f,g)\r|_\H\,,
  \]
  which implies that $(u_\delta, v_\delta)\rarr(f,g)$ in $\H$ for the original sequence.\\
  If $(f,g)\in L^1(D)\times L^1(\Gamma)$, we introduce $\{(f_n,g_n)\}_{n\in\En}\subseteq\H$ such that
  $(f_n,g_n)\rarr(f,g)$ in $L^1(D)\times L^1(\Gamma)$ as $n\rarr\infty$: let $(u_{n,\delta},v_{n,\delta}):=(I+\delta\C_\eps)^{-1}(f_n,g_n)$.
  Using the fact that $(I+\delta\C_\eps)^{-1}$ is a contraction on $L^1(D)\times L^1(\Gamma)$ (see Lemma \ref{lm_Ceps3}), we have
  \[
  \begin{split}
  &\l|(u_\delta,v_\delta)-(f,g)\r|_{L^1(D)\times L^1(\Gamma)}\leq
  \l|(u_\delta,v_\delta)-(u_{n,\delta},v_{n,\delta})\r|_{L^1(D)\times L^1(\Gamma)}\\
  &\qquad\qquad\qquad\qquad+
  \l|(u_{n,\delta},v_{n,\delta})-(f_n,g_n)\r|_{L^1(D)\times L^1(\Gamma)}+
  \l|(f_n,g_n)-(f,g)\r|_{L^1(D)\times L^1(\Gamma)}\\
  &\qquad\leq
  2\l|(f,g)-(f_n,g_n)\r|_{L^1(D)\times L^1(\Gamma)}+
  C\l|(u_{n,\delta},v_{n,\delta})-(f_n,g_n)\r|_\H
  \end{split}
  \]
  for a positive constant $C$ independent of $n$ and $\delta$.
  Now, for any $\eta>0$, there is $n\in\En$ such that the first term on the right-hand side of the previous 
  expression is controlled by $\eta$: for such an $n$, thanks to what we have already proved, there is $\delta$ such that
  the second term is less or equal than $\eta$. Hence, the right-hand side can be made smaller than $2\eta$ and the claim is proved.\\
  Finally, let $(f,g)\in\V_\eps$: for what we have already proved, we know that $(u_\delta, v_\delta)\rarr(f,g)$ in $\H$
  as $\delta\searrow0$. Moreover, we have
  \[
  (u_\delta,v_\delta)+\delta\C_\eps(u_\delta,v_\delta)=(f,g) \quad\text{in } \H\,:
  \]
  taking the scalar product in $\H$ with $\C_\eps(u_\delta,v_\delta)$ in the previous expression,
  using the fact that $(f,g)\in\V_\eps$ and integrating by parts we get
  \[
  \int_D|\nabla u_\delta|^2 + \eps\int_\Gamma|\nabla_\Gamma v_\delta|^2 + \delta\l|\C_\eps(u_\delta,v_\delta)\r|_\H^2=
  \int_D\nabla f\cdot\nabla u_\delta + \eps\int_\Gamma\nabla_\Gamma g\cdot\nabla_\Gamma v_\delta\,.
  \]
  The Young inequality yields then
  \[
  \l|\nabla u_\delta\r|_H^2+\eps\l|\nabla_\Gamma v_\delta\r|^2_{H_\Gamma}
  +2 \delta\l|\C_\eps(u_\delta,v_\delta)\r|_\H^2\leq
  \l|\nabla f\r|_H^2+\eps\l|\nabla_\Gamma g\r|^2_{H_\Gamma}\,,
  \]
  which together with \eqref{lim1} implies that 
  \[
  |(u_\delta,v_\delta)|^2_{\V_\eps}\leq|(f,g)|^2_{\V_\eps}\,.
  \]
  We deduce that
  \[
  (u_\delta,v_\delta)\rarrw(f,g) \quad\text{in } \V_\eps\,, \qquad \limsup_{\delta\searrow0}|(u_\delta,v_\delta)|_{\V_\eps}\leq|(f,g)|_{\V_\eps}\,,
  \]
  from which $(u_\delta,v_\delta)\rarr(f,g)$ in $\V_\eps$ as well.
\end{proof}

\begin{lem}[Maximum principle]
  \label{lm_Ceps6}
  Let $c_1,c_2>0$ and 
  $(f,g)\in L^1(D)\times L^1(\Gamma)$ with $f\leq c_1$ and $g\leq c_2$ almost everywhere on $D$
  and $\Gamma$, respectively; if $(u,v):=(I+\delta\C_\eps)^{-1}(f,g)$ then
  \[
  u\leq\max\{c_1,c_2\} \quad\text{a.e.~on } D\,, \qquad
  v\leq\max\{c_1,c_2\} \quad\text{a.e.~on } \Gamma\,.
  \]
\end{lem}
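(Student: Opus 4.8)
The plan is to establish the estimate first for square-integrable data, where the variational identity \eqref{var} (with the obvious $\delta$-dependent modification) is directly available, and then to recover the general $L^1$ case by approximation. Set $M:=\max\{c_1,c_2\}$, so that both $f\leq c_1\leq M$ and $g\leq c_2\leq M$. For $(f,g)\in\H$ the couple $(u,v)=(I+\delta\C_\eps)^{-1}(f,g)$ satisfies
\[
\int_Du\varphi + \int_\Gamma v\psi + \delta\int_D\nabla u\cdot\nabla\varphi + \eps\delta\int_\Gamma\nabla_\Gamma v\cdot\nabla_\Gamma\psi = \int_Df\varphi + \int_\Gamma g\psi
\]
for every $(\varphi,\psi)\in\V_\eps$. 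The natural move is to test with the truncation $(\varphi,\psi):=\big((u-M)^+,(v-M)^+\big)$.

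Before testing I would verify that this couple indeed belongs to $\V_\eps$. Since $u\in V=H^1(D)$, the positive part $(u-M)^+$ is again in $H^1(D)$, and the crucial identity is that the trace commutes with the positive part, namely $(v-M)^+=(\tau u-M)^+=\tau\big((u-M)^+\big)$. This is precisely where the coupling in $\V_\eps$ is felt: in the delicate case $\eps=0$, where $V_\Gamma^0=H^{1/2}(\Gamma)$ is not stable under truncation, the boundary component is nonetheless admissible exactly because it equals the trace of the bulk truncation. I expect this verification to be the main (and essentially the only) technical obstacle.

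With an admissible test function the computation is routine. The two gradient terms are nonnegative, since $\nabla u\cdot\nabla(u-M)^+=|\nabla(u-M)^+|^2\geq0$ and likewise on the boundary; dropping them and rearranging yields
\[
\int_D(u-f)(u-M)^+ + \int_\Gamma(v-g)(v-M)^+\leq0.
\]
On the set $\{u>M\}$ one has $u-f\geq u-c_1\geq u-M=(u-M)^+$, and analogously $v-g\geq(v-M)^+$ on $\{v>M\}$, so the left-hand side is bounded below by $\l|(u-M)^+\r|_H^2+\l|(v-M)^+\r|_{H_\Gamma}^2$. Hence both positive parts vanish, which is precisely the claim for $L^2$ data.

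Finally I would pass to $L^1$ data by a truncation-from-below approximation: given $(f,g)\in L^1(D)\times L^1(\Gamma)$ with $f\leq c_1$ and $g\leq c_2$, set $f_n:=f\vee(-n)$ and $g_n:=g\vee(-n)$, which are bounded (hence in $\H$), still satisfy $f_n\leq c_1$ and $g_n\leq c_2$, and converge to $(f,g)$ in $L^1(D)\times L^1(\Gamma)$ by dominated convergence. The corresponding solutions $(u_n,v_n)$ obey $u_n\leq M$ and $v_n\leq M$ by the first part, while the $L^1$-contraction property of the resolvent established in Lemma~\ref{lm_Ceps3} gives $(u_n,v_n)\rarr(u,v)$ in $L^1(D)\times L^1(\Gamma)$. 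Extracting a subsequence converging almost everywhere, the inequalities $u_n\leq M$ and $v_n\leq M$ are preserved in the limit, yielding $u\leq M$ a.e.\ on $D$ and $v\leq M$ a.e.\ on $\Gamma$, as desired.
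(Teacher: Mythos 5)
Your proposal is correct, and its core coincides with the paper's own argument: the paper tests the variational identity \eqref{var} with $(\rho(u),\rho(v))$, where $\rho(t)=(t-c)_+$ and $c=\max\{c_1,c_2\}$, which is exactly your truncation test couple, and concludes from the same sign considerations. The genuine difference is your two-step structure. You first prove the bound for data in $\H$, where $(u,v)\in\V_\eps$ and \eqref{var} is legitimately available, and then recover general $L^1$ data by truncating the data from below, invoking the $L^1$-contractivity of the resolvent from Lemma~\ref{lm_Ceps3}, and passing to an a.e.\ convergent subsequence. The paper instead applies \eqref{var} directly to the solution associated with $L^1$ data, for which $(u,v)$ is only known to lie in $W^{1,q}(D)\times W^{1-1/q,q}(\Gamma)$ with $q<\frac{N}{N-1}$, so the admissibility of $(\rho(u),\rho(v))$ as a test couple in $\V_\eps$ is not actually justified there; your approximation step closes precisely this gap, at the modest price of an extra limit passage. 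Your observation that $(v-M)^+=\tau\big((u-M)^+\big)$, which is what makes the test couple admissible even when $\eps=0$ (where $V_\Gamma^0=H^{1/2}(\Gamma)$ is not stable under truncation), is the right point to isolate and is only implicit in the paper.
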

\begin{proof}
  Setting $c:=\max\{c_1,c_2\}$, we introduce the Lipschitz function
  $\rho(t):=(t-c)_+$, $t\in\Ar$: testing the corresponding variational formulation \eqref{var} by $(\rho(u),\rho(v))\in\V_\eps$ we have
  \[
  \int_D{\rho(u)u}+\int_\Gamma\rho(v)v+\delta\int_D\rho'(u)|\nabla u|^2 + \eps\delta\int_\Gamma\rho'(v)|\nabla_\Gamma v|^2=
  \int_Df\rho(u) + \int_\Gamma g\rho(v)\,.
  \]
  Using the definition of $\rho$, monotonicity and the hypotheses on $f$ and $g$ we infer that 
  \[
  \int_D|\rho(u)|^2+\int_\Gamma|\rho(v)|^2\leq\int_D(f-c)\rho(u)+\int_\Gamma(g-c)\rho(v)\leq0\,,
  \]
  from which $\rho(u)=0$ and $\rho(v)=0$. Hence, $u\leq c$ and $v\leq c$ almost everywhere.
\end{proof}

We introduce the projections on the first and second component, respectively, as
\[
  p_1: L^1(D)\times L^1(\Gamma)\rarr L^1(D)\,, \qquad
  p_2: L^1(D)\times L^1(\Gamma)\rarr L^1(\Gamma)\,.
\]
Let now $\delta>0$: we set
\begin{gather*}
  J_{\eps,\delta}^1:=p_1\circ(I+\delta\C_\eps)^{-1} :L^1(D)\times L^1(\Gamma)\rarr L^1(D)\,,\\
  J_{\eps,\delta}^2:=p_2\circ(I+\delta\C_\eps)^{-1}:L^1(D)\times L^1(\Gamma)\rarr L^1(\Gamma)\,.
\end{gather*}
Owing to Lemma \ref{lm_Ceps3}, 
it is well-clear that $J_{\eps,\delta}^i$ is a linear continuous operator for $i=1,2$ and that
for every $(f,g)\in L^1(D)\times L^1(\Gamma)$ by linearity we have
\[
(I+\delta\C_\eps)^{-1}(f,g)=\left(J_{\eps,\delta}^1(f,g), J_{\eps,\delta}^2(f,g) \right)=
\left(J_{\eps,\delta}^1(f,0)+J_{\eps,\delta}^1(0,g), J_{\eps,\delta}^2(f,0)+J_{\eps,\delta}^2(0,g)\right)\,.
\]

\begin{lem}[Convexity inequality]
  \label{lm_Ceps7}
  Let $(f,g)\in L^1(D)\times L^1(\Gamma)$ and $\Phi,\Psi:\Ar\rarr[0,+\infty)$ two proper convex and lower semicontinuous
  functions with $(\Phi(f), \Psi(g))\in L^1(D)\times L^1(\Gamma)$. Then, for every $\delta>0$ we have that
  \begin{gather*}
  \Phi\left(J_{\eps,\delta}^1(f,0)\right)+\Psi\left(J_{\eps,\delta}^1(0,g)\right)\leq
  J_{\eps,\delta}^1\left(\Phi(f),\Psi(g)\right) \quad\text{a.e.~in } D\,,\\
  \Phi\left(J_{\eps,\delta}^2(f,0)\right)+
  \Psi\left(J_{\eps,\delta}^2(0,g)\right)\leq J_{\eps,\delta}^2\left(\Phi(f),\Psi(g)\right) \quad\text{a.e.~in } \Gamma\,.
  \end{gather*}
\end{lem}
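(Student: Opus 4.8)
The plan is to read the two inequalities as a Jensen-type estimate reflecting the \emph{sub-Markovian} nature of the resolvent. By Lemma~\ref{lm_Ceps3} the operator $(I+\delta\C_\eps)^{-1}$ is a linear contraction on $L^1(D)\times L^1(\Gamma)$, so the four operators
\[
T_D:=J_{\eps,\delta}^1(\cdot,0),\quad T_\Gamma:=J_{\eps,\delta}^1(0,\cdot),\quad S_D:=J_{\eps,\delta}^2(\cdot,0),\quad S_\Gamma:=J_{\eps,\delta}^2(0,\cdot)
\]
are linear and continuous, and by linearity $J_{\eps,\delta}^1(f,0)=T_Df$, $J_{\eps,\delta}^1(0,g)=T_\Gamma g$ and $J_{\eps,\delta}^1(\Phi(f),\Psi(g))=T_D(\Phi(f))+T_\Gamma(\Psi(g))$, with analogous identities for $J_{\eps,\delta}^2$. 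Since the two claims have identical structure, I would treat only the one in $D$; the boundary one follows by replacing $(T_D,T_\Gamma)$ with $(S_D,S_\Gamma)$.

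First I would record the two structural facts on which everything rests. \textbf{Positivity:} if $(f,g)\geq(0,0)$ a.e.\ then $(I+\delta\C_\eps)^{-1}(f,g)\geq(0,0)$ a.e.; this follows from Lemma~\ref{lm_Ceps6} applied to $(-f,-g)$ with thresholds $c_1=c_2=\eta>0$, letting $\eta\searrow0$ (equivalently, by testing \eqref{var} with the negative parts, as in Lemma~\ref{lm_Ceps3}). Hence $T_D,T_\Gamma$ are positive. \textbf{Preservation of the constant state:} because $-\Delta$, $\partial_{\bf n}$ and $\Delta_\Gamma$ annihilate constants, the pair $(\mathbf{1}_D,\mathbf{1}_\Gamma)$ lies in $D(\C_\eps)$ and solves $(I+\delta\C_\eps)(\mathbf{1}_D,\mathbf{1}_\Gamma)=(\mathbf{1}_D,\mathbf{1}_\Gamma)$; projecting the identity $(I+\delta\C_\eps)^{-1}(\mathbf{1}_D,\mathbf{1}_\Gamma)=(\mathbf{1}_D,\mathbf{1}_\Gamma)$ gives $T_D\mathbf{1}_D+T_\Gamma\mathbf{1}_\Gamma=\mathbf{1}_D$, both summands being nonnegative.

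For the Jensen step, a finite convex $\Phi:\Ar\to[0,+\infty)$ is continuous and equals the supremum of the countable family of its supporting lines at rational points, $\Phi=\sup_n\ell_n$ with $\ell_n(r)=a_nr+b_n$; likewise $\Psi=\sup_m\ell'_m$, $\ell'_m(r)=a'_mr+b'_m$. The key point is that $b_n=\ell_n(0)\leq\Phi(0)=0$ and $b'_m\leq\Psi(0)=0$, where I use the normalization $\Phi(0)=\Psi(0)=0$ (exactly the one imposed on $j,j_\Gamma$ and their conjugates in the paper). From $\ell_n\leq\Phi$ and $\ell'_m\leq\Psi$ one has $\Phi(f)-a_nf-b_n\mathbf{1}_D\geq0$ and $\Psi(g)-a'_mg-b'_m\mathbf{1}_\Gamma\geq0$; applying the positive operators $T_D,T_\Gamma$, summing and adding $b_n+b'_m$ gives, a.e.\ in $D$,
\[
(a_nT_Df+b_n)+(a'_mT_\Gamma g+b'_m)\leq J_{\eps,\delta}^1(\Phi(f),\Psi(g))+b_n(\mathbf{1}_D-T_D\mathbf{1}_D)+b'_m(\mathbf{1}_\Gamma-T_\Gamma\mathbf{1}_\Gamma).
\]
Since $\mathbf{1}_D-T_D\mathbf{1}_D=T_\Gamma\mathbf{1}_\Gamma\geq0$ and $\mathbf{1}_\Gamma-T_\Gamma\mathbf{1}_\Gamma=T_D\mathbf{1}_D\geq0$ while $b_n,b'_m\leq0$, the last two terms are nonpositive, so the left-hand side is bounded by $J_{\eps,\delta}^1(\Phi(f),\Psi(g))$ for every $n,m$. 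Taking the supremum over the countably many $n,m$ yields $\Phi(T_Df)+\Psi(T_\Gamma g)\leq J_{\eps,\delta}^1(\Phi(f),\Psi(g))$ a.e.\ in $D$, which is the assertion.

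The routine verifications (continuity of $T_D,T_\Gamma$, integrability of the operands, measurability of the countable suprema) are immediate. The genuine content, and the delicate point, is the sub-Markovian pair of properties at the $L^1$ level: positivity, and the balance $T_D\mathbf{1}_D+T_\Gamma\mathbf{1}_\Gamma=\mathbf{1}_D$ arising from the coupling between bulk and boundary in $\C_\eps$. It is exactly this balance, together with the normalization $\Phi(0)=\Psi(0)=0$ that forces the minorant constants $b_n,b'_m$ to be nonpositive, that makes the two error terms drop out; without $\Phi(0)=\Psi(0)=0$ the inequality fails (take $\Phi\equiv\Psi\equiv1$, giving $2\leq1$), so this normalization is where the argument must be handled with care.
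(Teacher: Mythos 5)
Your proof is correct, and it takes a genuinely different route from the paper's. The paper uses the very same decomposition $J_{\eps,\delta}^1(f,g)=L_{\eps,\delta}^1 f+G_{\eps,\delta}^1 g$ (your $T_Df+T_\Gamma g$), but then delegates the Jensen step to the literature: it checks via Lemma~\ref{lm_Ceps3} and the maximum principle (Lemma~\ref{lm_Ceps6}) that $L^1_{\eps,\delta}$ and $G^1_{\eps,\delta}$ are sub-Markovian contractions in the sense of \cite[Def.~3.1]{haase}, invokes the generalized Jensen inequality \cite[Thm.~3.4]{haase} to get $\Phi\bigl(J^1_{\eps,\delta}(f,0)\bigr)\leq J^1_{\eps,\delta}(\Phi(f),0)$ and $\Psi\bigl(J^1_{\eps,\delta}(0,g)\bigr)\leq J^1_{\eps,\delta}(0,\Psi(g))$ separately, and sums by linearity. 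You instead reprove that inequality by hand from the same two structural ingredients -- positivity of the resolvent (obtained, as you say, from Lemma~\ref{lm_Ceps6} applied to $(-f,-g)$, or by testing with negative parts) and the behaviour on constants -- combined with the representation of a finite convex function as a countable supremum of supporting lines; your argument exploits the exact balance $T_D\mathbf{1}_D+T_\Gamma\mathbf{1}_\Gamma=\mathbf{1}_D$ coming from $\C_\eps(\mathbf{1},\mathbf{1})=(0,0)$, where the paper only needs sub-Markovianity of each operator separately. What your route buys, besides being self-contained, is transparency on the hypotheses: your observation that the stated inequality is \emph{false} without the normalization $\Phi(0)=\Psi(0)=0$ (take $\Phi\equiv\Psi\equiv1$: since the resolvent fixes constants, the claim would read $2\leq1$) is a genuine catch. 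This normalization is precisely the hypothesis $\Phi(0)=0$ built into Haase's theorem, so the paper's proof carries the same implicit restriction even though the lemma's statement omits it; all applications in Corollary~\ref{cor_Ceps2} use $j$, $j_\Gamma$, $j^*$, $j_\Gamma^*$, which vanish at the origin, so nothing downstream is affected. What the paper's route buys is brevity. One small slip in your display: the term $b'_m(\mathbf{1}_\Gamma-T_\Gamma\mathbf{1}_\Gamma)$ should read $b'_m(\mathbf{1}_D-T_\Gamma\mathbf{1}_\Gamma)$, since $T_\Gamma\mathbf{1}_\Gamma$ is a function on $D$, and accordingly the identity you use is $\mathbf{1}_D-T_\Gamma\mathbf{1}_\Gamma=T_D\mathbf{1}_D\geq0$; this is purely notational and does not affect the argument.
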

\begin{proof}
  We introduce the operators
  \begin{gather*}
    L_{\eps,\delta}^1:L^1(D)\rarr L^1(D)\,, \qquad
    L_{\eps,\delta}^1(f):=J_{\eps,\delta}^1(f,0)\,, \quad f\in L^1(D)\,,\\
    G_{\eps,\delta}^1:L^1(\Gamma)\rarr L^1(D)\,, \qquad
    G_{\eps,\delta}^1(g):=J_{\eps,\delta}^1(0,g)\,, \quad g\in L^1(\Gamma)\,.
  \end{gather*}
  Then, by Lemma \ref{lm_Ceps3} it is a standard matter to see that $L_{\eps,\delta}^1$ and $G_{\eps,\delta}^1$
  are linear contractions. Moreover, Lemma \ref{lm_Ceps6} ensures
  that they are sub-markovian operators in the sense of \cite[Def.~3.1]{haase}: hence,
  the generalized Jensen inequality contained in \cite[Thm.~3.4]{haase} implies that
  a.e.~on $D$
  \[
  \Phi\left(J_{\eps,\delta}^1(f,0)\right)\leq J_{\eps,\delta}^1\left(\Phi(f),0\right)\,, \qquad
  \Psi\left(J_{\eps,\delta}^1(0,g)\right)\leq J_{\eps,\delta}^1\left(0,\Psi(g)\right)\,.
  \]
  The first thesis follows summing the two inequalities, while the second can be easily proved
  with the other (obvious) choice of $L^2_{\eps,\delta}$ and $G^2_{\eps,\delta}$.
\end{proof}

\begin{cor}
  \label{cor_Ceps2}
  For every $(f,g)\in\V_\eps$ and for every $(h,\ell)\in L^1(D)\times L^1(\Gamma)$ such that
  $j(f)+j^*(h)\in L^1(D)$ and $j_\Gamma(g)+j_\Gamma^*(\ell)\in L^1(\Gamma)$, the families
  \[
  \left\{J_{\eps,\delta}^1(f,g)J_{\eps,\delta}^1(h,\ell)\right\}_{\delta>0} \qquad\text{and}\qquad
  \left\{J_{\eps,\delta}^2(f,g)J_{\eps,\delta}^2(h,\ell)\right\}_{\delta>0}
  \]
  are uniformly integrable on $D$ and $\Gamma$, respectively.
\end{cor}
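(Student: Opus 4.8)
The plan is to deduce the uniform integrability of the two product families from the uniform integrability of families of the type $\{\Phi(J^i_{\eps,\delta}(\cdot))\}_{\delta>0}$ for suitable convex $\Phi$, and then to dominate the latter by $L^1$-convergent (hence uniformly integrable) families by combining the convexity inequality of Lemma~\ref{lm_Ceps7} with the asymptotics of Lemma~\ref{lm_Ceps5}. I would begin with the family on $D$. Using the linearity of $J^1_{\eps,\delta}$ I first expand the product as a sum of the four terms built from $J^1_{\eps,\delta}(f,0)$, $J^1_{\eps,\delta}(0,g)$, $J^1_{\eps,\delta}(h,0)$ and $J^1_{\eps,\delta}(0,\ell)$; expanding \emph{before} applying any convex function is what avoids an awkward factor of two that would otherwise arise from taking $j$ of a sum. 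To each term I then apply the Fenchel--Young inequality with the conjugate pair $(j,j^*)$, keeping both signs so as to control the absolute value: for instance, a.e.~on $D$,
\[
\pm\,J^1_{\eps,\delta}(f,0)\,J^1_{\eps,\delta}(h,0)\le j\big(J^1_{\eps,\delta}(f,0)\big)+j^*\big(\pm J^1_{\eps,\delta}(h,0)\big),
\]
and analogously for the mixed terms with $j^*$ evaluated at the $(0,\ell)$-component.

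Next, since Lemma~\ref{lm_Ceps6} shows that $J^1_{\eps,\delta}$ is sub-Markovian on each of the components $(\cdot,0)$ and $(0,\cdot)$, Lemma~\ref{lm_Ceps7} lets me push the convex functions inside the operator: taking $\Phi=\Psi=j$ (resp.~$\Phi=\Psi=j^*(\pm\,\cdot)$) yields bounds such as $j(J^1_{\eps,\delta}(f,0))\le J^1_{\eps,\delta}(j(f),0)$ and $j^*(\pm J^1_{\eps,\delta}(0,\ell))\le J^1_{\eps,\delta}(0,j^*(\pm\ell))$. In this way each of the four (nonnegative) Young summands is dominated pointwise by one of the fixed-data families $\{J^1_{\eps,\delta}(j(f),j(g))\}_{\delta>0}$ and $\{J^1_{\eps,\delta}(j^*(\pm h),j^*(\pm\ell))\}_{\delta>0}$. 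By Lemma~\ref{lm_Ceps5} these families converge in $L^1(D)$ as $\delta\searrow0$, hence are uniformly integrable; a nonnegative family dominated pointwise by a uniformly integrable one is itself uniformly integrable, and a finite sum of such families is uniformly integrable, which settles the $D$-family. The $\Gamma$-family is treated identically through $J^2_{\eps,\delta}$, now pairing with the boundary potentials $(j_\Gamma,j_\Gamma^*)$.

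The entire scheme is conditional on four ``crossed'' integrability memberships, and verifying them is the main obstacle. For the bulk family I need $j(g)\in L^1(\Gamma)$ and $j^*(\pm\ell)\in L^1(\Gamma)$, whereas the hypotheses only grant $j_\Gamma(g),j_\Gamma^*(\ell)\in L^1(\Gamma)$; symmetrically, for the boundary family I need $j_\Gamma(f)\in L^1(D)$ and $j_\Gamma^*(\pm h)\in L^1(D)$, while only $j(f),j^*(h)\in L^1(D)$ are available. The comparisons $j(g)\lesssim1+j_\Gamma(g)$ and $j_\Gamma(f)\lesssim1+j(f)$ are exactly what is provided by \eqref{H1}, or by the polynomial bounds of \eqref{H2}--\eqref{H3'} together with the Sobolev embedding of the trace $g=\tau f$ (with $f\in H^1(D)$ and $g\in V_\Gamma^\eps$) into the $L^p(\Gamma)$, resp.~$L^p(D)$, spaces matched precisely to the stated exponents. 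The genuinely delicate point is the passage through the convex conjugates and through the opposite sign, i.e.~obtaining $j^*(\pm\ell)\in L^1(\Gamma)$ from $j_\Gamma^*(\ell)\in L^1(\Gamma)$ and $j_\Gamma^*(\pm h)\in L^1(D)$ from $j^*(h)\in L^1(D)$: here I would invoke the symmetry assumptions $\limsup_{|r|\to\infty}j(r)/j(-r)<\infty$ and $\limsup_{|r|\to\infty}j_\Gamma(r)/j_\Gamma(-r)<\infty$, which transfer to comparable growth of $j^*$ and $j_\Gamma^*$ and of $j^*(\cdot)$ against $j^*(-\cdot)$, so that the seed functions indeed lie in $L^1$ of the correct space. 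Once this conjugate/sign bookkeeping is secured, the reduction together with Lemmas~\ref{lm_Ceps5}--\ref{lm_Ceps7} closes the proof.
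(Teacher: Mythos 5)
Your proposal has the same skeleton as the paper's proof: expand the products by linearity of $J^1_{\eps,\delta}$, apply Fenchel--Young, push the convex functions inside the resolvent via Lemma~\ref{lm_Ceps7}, dominate by families converging in $L^1$ thanks to Lemma~\ref{lm_Ceps5}, and secure the crossed memberships $j(g)\in L^1(\Gamma)$, $j_\Gamma(f)\in L^1(D)$ through \eqref{H1}, \eqref{H2} or \eqref{H3}--\eqref{H3'} and Sobolev embeddings. The gap is in your sign/conjugate bookkeeping, and it is not cosmetic. You place the $\pm$ inside the conjugate function and use the single pair $(j,j^*)$ for all four products of the bulk family, which forces you to require $j^*(-h)\in L^1(D)$ and $j^*(\pm\ell)\in L^1(\Gamma)$, whereas the hypotheses only give $j^*(h)\in L^1(D)$ and $j_\Gamma^*(\ell)\in L^1(\Gamma)$. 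Your proposed bridge --- that the symmetry of $j$, $j_\Gamma$ and the comparisons \eqref{H1}--\eqref{H3'} ``transfer'' to the conjugates --- is false in general. Conjugating $j(r)\le K+C\,j(-r)$ yields only
\[
j^*(-s)\;\le\;\frac{K}{C}+\frac{1}{C}\,j^*(Cs)\,,\qquad s\in\Ar\,,
\]
and conjugating $j_\Gamma\le K+C\,j$ yields only $j^*(s)\le \frac{K}{C}+\frac1C\,j_\Gamma^*(Cs)$: in both cases a dilation $Cs$ appears in the argument, and without a doubling condition on the conjugates --- which would amount to reinstating exactly the kind of growth restriction on the potentials that the paper is built to avoid --- $j^*(Cs)$ is not controlled by $1+j^*(s)$. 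Concretely, take $j(r)=|r|\log(1+|r|)$ for $r\ge0$ and $j(r)=2|r|\log(1+|r|)$ for $r<0$: the symmetry hypothesis holds, yet $j^*$ grows exponentially with different rates on the two half-lines, so $j^*(h)\in L^1(D)$ does not imply $j^*(-h)\in L^1(D)$; similarly, with $j_\Gamma=2j$ (so \eqref{H1} holds trivially) one has $j_\Gamma^*(s)=2j^*(s/2)$, and integrability of $j^*(\ell/2)$ does not imply integrability of $j^*(\ell)$ for exponential-type conjugates. Note also that Lemma~\ref{lm_Ceps7} itself requires the membership $(\Phi(f),\Psi(g))\in L^1(D)\times L^1(\Gamma)$ for the convex functions you feed into it, so these missing memberships cannot be bypassed at that stage either.

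The repair is the paper's pairing, which is designed precisely to dodge both obstructions: in each Young inequality the sign $\pm$ is always placed inside $j$ or $j_\Gamma$ (where the symmetry hypothesis applies verbatim), and each conjugate is only ever evaluated at its own component, i.e.\ $j^*$ only at $J^1_{\eps,\delta}(h,0)$ and $j_\Gamma^*$ only at $J^1_{\eps,\delta}(0,\ell)$. Concretely, the mixed terms are estimated as
\[
\pm J^1_{\eps,\delta}(0,g)\,J^1_{\eps,\delta}(h,0)\le j\bigl(\pm J^1_{\eps,\delta}(0,g)\bigr)+j^*\bigl(J^1_{\eps,\delta}(h,0)\bigr)\,,\quad
\pm J^1_{\eps,\delta}(f,0)\,J^1_{\eps,\delta}(0,\ell)\le j_\Gamma\bigl(\pm J^1_{\eps,\delta}(f,0)\bigr)+j_\Gamma^*\bigl(J^1_{\eps,\delta}(0,\ell)\bigr)\,.
\]
With this arrangement no conjugate is ever evaluated at a negated or crossed argument; after removing the signs via the symmetry of $j$ and $j_\Gamma$, Lemma~\ref{lm_Ceps7} applies with exactly the assumed memberships, and the only crossed terms left are $j\bigl(J^1_{\eps,\delta}(0,g)\bigr)$ and $j_\Gamma\bigl(J^1_{\eps,\delta}(f,0)\bigr)$, which are the ones your case analysis via \eqref{H1}/\eqref{H2}/\eqref{H3}--\eqref{H3'} and the Sobolev embeddings correctly handles. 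Once you rearrange your inequalities this way, the rest of your argument (a nonnegative family dominated pointwise by an $L^1$-convergent family is uniformly integrable) goes through and recovers the paper's proof.
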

\begin{proof}
  Using linearity, Young's inequality, the symmetry of $j$, $j_\Gamma$
  and Lemma \ref{lm_Ceps7} we have
  \[
  \begin{split}
  &\pm J_{\eps,\delta}^1(f,g)J_{\eps,\delta}^1(h,\ell)\leq
  \left(\pm J_{\eps,\delta}^1(f,0)\pm J_{\eps,\delta}^1(0,g)\right)
  \left(J_{\eps,\delta}^1(h,0)+J_{\eps,\delta}^1(0,\ell)\right)\\
  &= \pm J_{\eps,\delta}^1(f,0)J_{\eps,\delta}^1(h,0)
  \pm J_{\eps,\delta}^1(0,g)J_{\eps,\delta}^1(0,\ell)
  \pm J_{\eps,\delta}^1(0,g)J_{\eps,\delta}^1(h,0)
  \pm J_{\eps,\delta}^1(f,0)J_{\eps,\delta}^1(0,\ell)\\
  &\leq j\left(\pm J_{\eps,\delta}^1(f,0)\right)+j^*\left(J_{\eps,\delta}^1(h,0)\right)
  +j_\Gamma\left(\pm J_{\eps,\delta}^1(0,g)\right)+j_\Gamma^*\left(J_{\eps,\delta}^1(0,\ell)\right)\\
  &\qquad\qquad+j^*\left(J_{\eps,\delta}^1(h,0)\right)+j^*_\Gamma\left(J_{\eps,\delta}^1(0,\ell)\right)
  +j\left(\pm J_{\eps,\delta}^1(0,g)\right)+j_\Gamma\left(\pm J_{\eps,\delta}^1(f,0)\right)\\
  &\lesssim 1+
  j\left(J_{\eps,\delta}^1(f,0)\right)+j^*\left(J_{\eps,\delta}^1(h,0)\right)
  +j_\Gamma\left(J_{\eps,\delta}^1(0,g)\right)+j_\Gamma^*\left(J_{\eps,\delta}^1(0,\ell)\right)\\
  &\qquad\qquad+j^*\left(J_{\eps,\delta}^1(h,0)\right)+j^*_\Gamma\left(J_{\eps,\delta}^1(0,\ell)\right)
  +j\left(J_{\eps,\delta}^1(0,g)\right)+j_\Gamma\left(J_{\eps,\delta}^1(f,0)\right)\\
  &\leq1+
  J_{\eps,\delta}^1\left(j(f),j_\Gamma(g)\right)+2J_{\eps,\delta}^1\left(j^*(h),j_\Gamma^*(\ell)\right)
  +j\left(J_{\eps,\delta}^1(0,g)\right)+j_\Gamma\left(J_{\eps,\delta}^1(f,0)\right)\,.
  \end{split}
  \]
  Now, since $(j(f),j_\Gamma(g)), (j^*(h),j^*_\Gamma(\ell))\in L^1(D)\times L^1(\Gamma)$, 
  by Lemma \ref{lm_Ceps5} the sum of the first three terms on the right-hand side
  converge in $L^1(D)$ to $1+j(f)+2j^*(h)$ as $\delta\searrow0$.
  Hence, the first thesis follows if we are able to prove that 
  \[
  j\left(J_{\eps,\delta}^1(0,g)\right)+j_\Gamma\left(J_{\eps,\delta}^1(f,0)\right)
  \]
  is uniformly integrable on $D$. To this aim, we need to distinguish wether \eqref{H1}, \eqref{H2} or \eqref{H3}--\eqref{H3'}
  is in order. Firstly, if we assume hypothesis \eqref{H1}, the fact that $(j(f), j_\Gamma(g))\in L^1(D)\times L^1(\Gamma)$
  implies that also $(j_\Gamma(f), j(g))\in L^1(D)\times L^1(\Gamma)$: consequently, by Lemma \ref{lm_Ceps7},
  the two terms are bounded by $J^1_{\eps,\delta}\left(j_\Gamma(f), j(g)\right)$, which converges in
  $L^1(D)$ thanks to Lemma \ref{lm_Ceps5}.
  Secondly, let us assume \eqref{H2}. The facts that $j_\Gamma$ controls $j$ and $j_\Gamma(g)\in L^1(\Gamma)$
  imply that $j(g)\in L^1(\Gamma)$, so that by Lemma \ref{lm_Ceps7} 
  the first term is handled by $J^1_{\eps,\delta}\left(0, j(g)\right)$,
  which converges in $L^1(\Gamma)$ by Lemma \ref{lm_Ceps5}. Moreover, 
  $f\in H^1(D)$ and the Sobolev embeddings ensure that
  \[
  \begin{cases}
  H^1(D)\embed L^p(D) \quad\forall\,p\in[1,+\infty)\qquad&\text{if } N=2\,,\\
  H^1(D)\embed L^{\frac{2N}{N-2}}(D) &\text{if } N>2\,:
  \end{cases}
  \]
  hence, hypothesis \eqref{H2} implies that $j_\Gamma(f)\in L^1(D)$, so that the second term is
  bounded by $J^1_{\eps,\delta}\left(j_\Gamma(f),0\right)$, which converges in $L^1(D)$
  by Lemma \ref{lm_Ceps5}. Finally, let us assume (H3).
  Since $j$ controls $j_\Gamma$ and $j(f)\in L^1(D)$, we have also $j_\Gamma(f)\in L^1(D)$:
  hence, by Lemma \ref{lm_Ceps7}
  the first term is handled by $J^1_{\eps,\delta}\left(j_\Gamma(f),0\right)$, which converges in $L^1(D)$
  by Lemma \ref{lm_Ceps5}. Let us focus on the first term $j\left(J_{\eps,\delta}^1(0,g)\right)$.
  If $\eps>0$, we have $g\in H^1(\Gamma)$ and by the Sobolev embeddings (since $\Gamma$
  has dimension $N-1$)
  \[
  \begin{cases}
  H^1(\Gamma)\embed L^\infty(\Gamma) \qquad&\text{if } N=2\,,\\
  H^1(\Gamma)\embed L^p(\Gamma) \quad\forall\,p\in[1,+\infty) &\text{if } N=3\,,\\
  H^1(\Gamma)\embed L^{\frac{2(N-1)}{N-3}}(\Gamma) &\text{if } N>3\,.
  \end{cases}
  \]
  Hence, hypothesis \eqref{H3} ensures that $j(g)\in L^1(\Gamma)$, so that by Lemma \ref{lm_Ceps7} we have
  $j\left(J_{\eps,\delta}^1(0,g)\right)\leq J_{\eps,\delta}^1\left(0, j(g)\right)$, which converges in 
  $L^1(D)$ by Lemma \ref{lm_Ceps5}. Similarly, if $\eps=0$ then $g\in H^{1/2}(\Gamma)$
  and by the Sobolev embeddings we have
  \[
  \begin{cases}
  H^{1/2}(\Gamma)\embed L^p(\Gamma) \quad\forall\,p\in[1,+\infty) \qquad&\text{if } N=2\,,\\
  H^{1/2}(\Gamma)\embed L^\frac{2(N-1)}{N-2}(\Gamma) &\text{if } N>2\,.
  \end{cases}
  \]
  Consequently, \eqref{H3'} ensures again that $j(g)\in L^1(\Gamma)$, and we can conclude as in the case
  $\eps>0$.\\
  We have proved that $\pm J_{\eps,\delta}^1(f,g)J_{\eps,\delta}^1(h,\ell)$, hence
  also $|J_{\eps,\delta}^1(f,g)J_{\eps,\delta}^1(h,\ell)|$, is bounded by a family 
  which converges in $L^1(D)$ as $\delta\searrow0$, from which the uniform integrability follows.
  The argument for the family $\left\{J_{\eps,\delta}^2(f,g)J_{\eps,\delta}^2(h,\ell)\right\}_{\delta>0}$
  is exactly the same, and this completes the proof.
\end{proof}

%%%%%%%%%%%%%%%%%%%%%%%%%%%%%%%%%%%%%%%%%%%%%%%%%%%%%

%\nocite{*}
\bibliography{ref}{}
\bibliographystyle{abbrv}

\end{document}